\newtheoremstyle{mesthm}
{10pt plus 1pt minus 1pt}
{9pt minus 6pt}
{\slshape}
{0.5cm}
{\bfseries}
{.}
{1ex}
{}
\theoremstyle{mesthm}
\newcounter{mt}
\newtheorem{Proposition}{Proposition}[section]
\newtheorem{Definition}[Proposition]{Definition}
\newtheorem{Lemma}[Proposition]{Lemma}
\newtheorem{Theorem}[Proposition]{Theorem}
\newtheorem{Corollary}[Proposition]{Corollary}
\theoremstyle{remark}
\newtheorem{Remark}[Proposition]{Remark}
\DeclareMathOperator{\Val}{Val}
\DeclareMathOperator{\nc}{nc}
\DeclareMathOperator{\Gr}{Gr}
\DeclareMathOperator{\sgn}{sgn}
\DeclareMathOperator{\id}{id}
\DeclareMathOperator{\vol}{vol}
\DeclareMathOperator{\Dens}{Dens}
\DeclareMathOperator{\Span}{Span}
\DeclareMathOperator{\Sym}{Sym}
\DeclareMathOperator{\Ker}{ker}
\DeclareMathOperator{\ori}{or}
\DeclareMathOperator{\GL}{GL}
\DeclareMathOperator{\sign}{sign}
\newcommand{\R}{\mathbb{R}}
\newcommand{\C}{\mathbb{C}}
\newcommand{\RR}{\mathbb{R}}
\newcommand{\CC}{\mathbb{C}}
\newcommand{\PP}{\mathbb{P}}
\newcommand{\largewedge}{\mathbb{\wedge}}
\newcommand{\FF}{\mathbb{F}}
\newcommand{\FC}{\mathcal{F}}
\newcommand{\wt}{\widetilde}
\newcommand{\calK}{\mathcal{K}}
\newcommand{\dens}{\mathrm{Dens}}
\newcommand{\calS}{\mathcal{S}}
\DeclareFontFamily{OMX}{MnSymbolE}{}
\DeclareSymbolFont{MnLargeSymbols}{OMX}{MnSymbolE}{m}{n}
\DeclareFontShape{OMX}{MnSymbolE}{m}{n}{
	<-6>  MnSymbolE5
	<6-7>  MnSymbolE6
	<7-8>  MnSymbolE7
	<8-9>  MnSymbolE8
	<9-10> MnSymbolE9
	<10-12> MnSymbolE10
	<12->   MnSymbolE12
}{}
\DeclareFontShape{OMX}{MnSymbolE}{b}{n}{
	<-6>  MnSymbolE-Bold5
	<6-7>  MnSymbolE-Bold6
	<7-8>  MnSymbolE-Bold7
	<8-9>  MnSymbolE-Bold8
	<9-10> MnSymbolE-Bold9
	<10-12> MnSymbolE-Bold10
	<12->   MnSymbolE-Bold12
}{}
\let\llangle\@undefined
\let\rrangle\@undefined
\DeclareMathDelimiter{\llangle}{\mathopen}%
{MnLargeSymbols}{'164}{MnLargeSymbols}{'164}
\DeclareMathDelimiter{\rrangle}{\mathclose}%
{MnLargeSymbols}{'171}{MnLargeSymbols}{'171}
\title[]{
The Fourier transform on valuations  is the Fourier transform
}
\author{Dmitry Faifman}
\address{School of Mathematical Sciences, Tel Aviv University, Tel Aviv 6997801, Israel}
\email{faifmand@tauex.tau.ac.il}
\author{Thomas Wannerer}
\address{Friedrich-Schiller-Universitat Jena, Fakult\"at f\"ur Mathematik und Informatik, Institut f\"ur Mathematik, Ernst-Abbe-Platz 2, 07743 Jena, Germany}
\email{thomas.wannerer@uni-jena.de}
\thanks{DF was supported by the Israel Science Foundation grant No. 1750/20.} \thanks{TW was supported by DFG grant WA 3510/3-1.}
\date{\today}
\subjclass[2020]{52B45, 53C65, 42B10, 43A32}
\begin{document}
\begin{abstract}Alesker has proved the existence of a remarkable isomorphism of the space of translation-invariant smooth valuations that has the same functorial properties as the classical Fourier transform. In this paper, we  show how to directly describe  this isomorphism in terms of the Fourier transform on functions. As a consequence, we obtain simple proofs of the main properties of the Alesker--Fourier transform. One of these properties was previously only conjectured by Alesker and is proved here for the first time.
\end{abstract}

\maketitle

\sloppy
\section{Introduction}

\subsection{The Alesker--Fourier transform}

Let $V$ be an $n$-dimensional real vector space and let $\calK(V)$ denote the space of convex bodies, i.e., non-empty convex compact subsets of $V$. 
In convex geometry, a valuation   is a function $\phi\colon \calK(V)\to \CC$ satisfying the property
$$ \phi(K\cup L) = \phi(K) + \phi(L) - \phi(K\cap L)$$
whenever the union of $K$ and $L$ is convex.  Valuations, in particular in connection with dissection problems for polytopes and integral geometry,  are a classical part of convex geometry, see the books by Schneider~ \cite{Schneider:BM} and Gruber~\cite{Gruber:CDG} for more information. 

About two decades ago, the revolutionary work of Alesker uncovered a 
surprisingly rich algebraic structure of the space of translation-invariant and continuous valuations. The latter space, denoted by $\Val(V)$, has been intensively studied through the work of  Hadwiger \cite{Hadwiger:Vorlesungen}, McMullen \cite{McMullen:Euler, McMullen:Continuous}, Schneider \cite{Schneider:Simple}, Goodey--Weil \cite{GoodeyWeil:Distributions}, Klain \cite{Klain:Short, Klain:Even}, and others \cite[Chapter 6]{Schneider:BM}. One of Alesker's key insights was to identify  a natural dense subspace of smooth valuations $\Val^\infty(V)\subset \Val(V)$ with remarkable properties. The most important one is the existence of a product, now called the Alesker product, that gives this space  the structure of commutative graded algebra with identity \cite{Alesker:Product}. Working with smooth valuations is crucial, as the Alesker product and most of the other operations do not continuously extend to the larger space $\Val(V)$. 
Building on Alesker's work, Bernig and Fu \cite{BernigFu:Convolution} discovered a second multiplicative structure called the convolution of valuations. 

Injective linear maps $f\colon V\hookrightarrow W$ induce a pullback  of valuations
$$f^*\colon \Val^\infty(W)\to \Val^\infty(V),$$ 
which respects the Alesker product, while surjective linear maps induce a pushforward 
$$ f_* \colon \Val^\infty(V)\otimes \dens(V^*)\to \Val^\infty(W)\otimes \dens(W^*),$$
which respects the Bernig--Fu convolution, see \cite{Alesker:Fourier}. Here $\dens(V)$ denotes the one-dimensional space of densities on $V$, see below for a precise definition. Product and convolution admit a common description in terms of the exterior product of valuations 
$$ \Val^\infty(V)\times \Val^\infty(W)\to \Val(V\times W)$$
denoted by $\phi\boxtimes \psi$. Namely, using the operations of pullback and pushforward, 
$$ \phi \cdot \psi = \Delta^*(\phi\boxtimes \psi) \quad \text{and} \quad \phi * \psi = a_*(\phi\boxtimes \psi),$$
where $\Delta\colon V\to V\times V$ is the diagonal embedding, and $a\colon V\times V\to V$ is the addition in the vector space $V$.

The Alesker--Fourier transform,  or simply the Fourier transform on valuations enriches this picture even further. First defined by Alesker for even valuations in \cite{Alesker:HL} and only later in \cite{Alesker:Fourier} constructed in full generality, it is an isomorphism
\[\FF: \Val^\infty(V)\to \Val^\infty(V^*)\otimes\Dens(V)\]
which interchanges the  product with convolution, pullback with pushforward, and satisfies the inversion formula: applying the Alesker--Fourier transform twice equals  the pullback by the antipodal map $x\mapsto -x$.  Its construction in the general case rested on Alesker's highly non-trivial irreducibility theorem, as well as sophisticated methods from infinite-dimensional representation theory of $\GL_n(\R)$.

The name ``Fourier'' was attached to $\FF$ due to its functorial properties, which are strongly reminiscent of the classical Fourier transform on functions.  
More recently however, a special case of the Alesker--Fourier transform was observed to be indeed linked to the Fourier transform of functions \cite[Corollary 3.9]{DorrekSchuster:AF}.

\subsection{Our results}

In this work, we give a description of the Alesker--Fourier transform that directly derives it from the Fourier transform on functions, or rather differential forms, in all cases, and directly deduces its key properties from the corresponding properties of the Fourier transform. This solves a problem posed by Alesker \cite[p. 17]{AleskerFu:Barcelona}. Moreover, our approach allows us to establish an additional property of the Fourier transform, which were conjectured by Alesker \cite[p. 17]{AleskerFu:Barcelona}.

 In particular, we make no use of the irreducibility theorem of Alesker \cite{Alesker:Irreducibility}, assuming only the definition of smooth valuations through integration over the normal cycle: A valuation $\phi\in \Val(V)$ is smooth if  there exists a smooth differential form $\omega$ on $\PP_+(V^*)$, the oriented projectivization of $V^*$, with values in $\largewedge^\bullet V^* \otimes \mathrm{or}(V)$, and a density $\theta$ on $V$ such that 
\begin{equation}\label{eq:smoothVal} \phi(K)= \theta(K) + \int_{\nc(K)} \omega\end{equation}
for all convex bodies $K$ in $V$. Here $\nc (K)$ denotes the normal cycle of $K$, and $\ori(V)$ denotes the one-dimensional space of orientation functions on $V$.

In the first step of our construction of the Fourier transform on valuations, which seems to be also of independent interest, we associate to  every  smooth valuation $\phi\in \Val^\infty(V)$ a unique $0$-homogeneous  current $\tau=\tau(\phi)$ on $V^*$, which is smooth outside of the origin. More precisely, if $\phi \in \Val_k^\infty(V)$ is given by \eqref{eq:smoothVal}, then we define a generalized form $\Omega_{-\infty}^{ n-k}( V^*, \largewedge^{ k} V^* \otimes \mathrm{or}(V))$ by
$$\tau(\phi)=  \phi(\{0\}) \cdot \delta_0 +  r^0( (-1)^{n-k} a^*D\omega +  \theta).$$
Here $D$ is the Rumin differential of $\omega$, a natural second order differential operator arising in contact geometry, $a$ is the antipodal map, and $r^0$ denotes the $0$-homogeneous extension from $\PP_+(V^*)$ to $V^*$. For more details, see section \ref{sec:valuation_current}. A closely related description of valuations in terms of a pair of currents was introduced in \cite{AleskerBernig:Product} building on earlier work of Bernig and Br\"ocker \cite{BernigBroecker:Rumin}.

The main point of this construction is that we are able to characterize all generalized forms arising in this fashion by a short list of inevitable properties (Proposition~\ref{prop:forms_121_valuations}). 
Moreover, the  operations of pullback, pushforward, and exterior product of valuations admit simple descriptions in terms of the $0$-homogeneous current  $\tau(\phi)$, see Propositions~\ref{prop:pullback_linear}, \ref{prop:pushforward_linear}, and \ref{prop:exterior_product}. Furthermore, the $0$-homogeneous current of a valuation allows an explicit description of the $0$-homogeneous component of the pushforward of a valuation by an epimorphism, which is not readily available when using the standard presentation by a pair of currents.

Let $F$ be  a finite-dimensional vector spaces of over the reals, and 
let $\Omega_{\mathcal S}^k(V, F)\subset \Omega^k(V,F)$ denote the Schwartz space of differential $k$-forms on $V$ with values in $F$ and rapidly decreasing coefficients. The Fourier transform of functions on $V$ extends naturally to differential forms on $V$. 

\begin{Definition}
	The Fourier  transform $$\FC: \Omega_{\mathcal S}^k(V, F) \to \Omega_{\mathcal S}^{n-k}(V^*, \ori(V)\otimes F)$$  is defined as follows.  For $\omega \in \Omega_{\mathcal S}^k(V, F)$, 	
	the Hodge star isomorphism 
	$$\ast: \largewedge^k V^* \xrightarrow{\sim} \largewedge^{n-k} V \otimes \largewedge^n V^* \simeq  \largewedge^{n-k} V \otimes \Dens(V) \otimes \ori(V)$$
	allows to consider $\omega$ as a map $\wt \omega:V\to  \largewedge^{n-k} V \otimes \Dens(V) \otimes \ori(V)\otimes F$.
	Hence we may define for $\xi\in V^*$
	$$ \FC(\omega)(\xi) =\int_{V} e^{2\pi \mathbf i \langle x,\xi\rangle}  \wt\omega(x) \in   \largewedge^{n-k} V\otimes \ori(V)\otimes F.$$
\end{Definition}
We remark that while different normalizations exist for the Fourier transform, an involutive Fourier transform on $0$-homogeneous forms is unique up to sign.

We extend this definition also to differential forms with tempered distributions as coefficients. 
In applications to  valuations,  the vector space $F$ is $\largewedge^\bullet V^*\otimes \mathrm{or}(V)$. If we denote the composition of $\FC$ with the canonical isomorphism 
$$F \otimes \mathrm{or}(V) \simeq \largewedge ^\bullet V \otimes \dens(V)\otimes \ori (V),$$ 
by $\FC^0$, then
$$ \FC^0\tau(\phi)\in \Omega_{\mathcal S'} (V, \largewedge^\bullet V \otimes  \ori(V)) \otimes \dens(V).$$  
Our first theorem states that this form defines a smooth valuation.

\begin{Theorem} Let $\phi\in\Val^\infty(V)$ be a smooth valuation, and let $\tau(\phi)$ be its $0$-homogeneous current. Then the Fourier transform 
	$ \FC^0 \tau(\phi)$ is the $0$-homogeneous current of a unique smooth valuation. 
\end{Theorem}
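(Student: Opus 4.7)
The plan is to apply Proposition~\ref{prop:forms_121_valuations}, which characterizes the image of $\phi \mapsto \tau(\phi)$ by a short intrinsic list of structural properties. Uniqueness of the corresponding valuation then follows from the bijective nature of this characterization, so the task reduces to verifying that $\FC^0 \tau(\phi)$ satisfies each of these properties on the $V$-side, after the density twist by $\dens(V)$.

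The first step is bookkeeping of degrees, homogeneities, and density twists. For $\phi = \sum_k \phi_k$ with $\phi_k \in \Val^\infty(V)$ of degree $k$, the current $\tau(\phi_k)$ is a $0$-homogeneous generalized $(n-k)$-form on $V^*$ with values in $\largewedge^k V^* \otimes \ori(V)$, smooth away from the origin. A direct calculation with the Hodge-star identifications built into the definition of $\FC^0$ should show that $\FC^0 \tau(\phi_k)$ lands in the space that is the target of $\tau$ for an $(n-k)$-homogeneous valuation on $V^*$ tensored with $\dens(V)$. The $0$-homogeneity is preserved because the scaling behaviour of the form under $m_t^*$ cancels against the Jacobian on the dual side when combined with the Hodge-star tensor factors.

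The second step is to verify the remaining structural conditions of Proposition~\ref{prop:forms_121_valuations}. The delta-at-origin piece $\phi(\{0\})\,\delta_0$ Fourier-transforms to a constant, which supplies the expected $0$-degree contribution on the dual side. Conversely, the smooth $0$-homogeneous extension $r^0((-1)^{n-k} a^*D\omega + \theta)$ transforms, by the classical theory of homogeneous tempered distributions, into a $0$-homogeneous form on $V$ smooth away from the origin, plus a possible distributional contribution supported at the origin. Any Rumin-type closure conditions from Proposition~\ref{prop:forms_121_valuations} translate cleanly under $\FC$, since the Fourier transform interchanges $d$ with multiplication by the dual coordinate (up to $2\pi \bi$), turning the closure conditions on one side into the corresponding algebraic or closure conditions on the other.

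The main obstacle will be tracking the residual distributional part at the origin. The Fourier transform of a smooth $0$-homogeneous form on $V^*\setminus\{0\}$ is a $0$-homogeneous distribution on $V$ that may contain a singular piece supported at the origin, coming from projective-spherical mean values; I would need to verify that this singular piece is exactly of the form required by Proposition~\ref{prop:forms_121_valuations}, matching the density term prescribed on the dual side. I expect this verification to reduce, by density and $\GL(V)$-equivariance, to an explicit calculation on a class of test valuations — for instance polynomial valuations or mixed volumes — where both $\tau(\phi)$ and $\FC^0 \tau(\phi)$ admit closed-form descriptions.
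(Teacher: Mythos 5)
Your general plan --- reduce to the characterization of Proposition~\ref{prop:forms_121_valuations} and verify each property of $\FC^0\tau(\phi)$, with uniqueness coming automatically from that characterization --- is the same as the paper's (which folds the whole argument into Proposition~\ref{prop:existenceFourier}). You are also right that the closure condition $d\tau=0$ and the Euler-contraction condition $i_E\tau=0$ exchange under $\FC$ via the identity $\FC\circ d\sim i_E\circ\FC$, and that the smooth-off-the-origin property is preserved by the Fourier transform of $0$-homogeneous distributions.

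However, there is a genuine gap, and you have misidentified where the real difficulty lies. You assert that \emph{all} the structural conditions of Proposition~\ref{prop:forms_121_valuations} ``translate cleanly'' under $\FC$ because the Fourier transform interchanges $d$ with coordinate multiplication. This is only true for two of the four conditions. The verticality condition $\tau\wedge E=0$ does \emph{not} follow from any such elementary Fourier identity: it is a different pointwise algebraic constraint on the form, not expressible in terms of $d$ and $i_E$ alone, and there is no reason a priori why $\FC^0\tau(\phi)\wedge E=0$ should hold. This is precisely the nontrivial point, and the paper establishes it through Proposition~\ref{prop:symmetry}, which rests on Lemma~\ref{lemma:gray_symmetry}: for a closed \emph{and} vertical form $\tau$, the corresponding form $\overline\tau$ is a symmetric $2$-tensor (a structural fact using both hypotheses together, via a result of Gray). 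Since the Fourier transform preserves this symmetry, the symmetry of $\overline{\FC^0\tau}$ together with $i_E\overline{\FC^0\tau}=0$ (which does follow from $d\tau=0$) forces the verticality $\FC^0\tau\wedge E=0$. Without recognizing and proving this symmetry, your argument does not close. By contrast, the ``residual distributional part at the origin'' that you flag as the main obstacle is actually unproblematic: the origin contribution is a $0$-homogeneous delta term, which Fourier-transforms to a constant and matches the $0$-degree component automatically, and Lemma~\ref{lem:supp0} takes care of any ambiguity at the origin in degrees $k<n$. Your proposed workaround --- reducing to explicit test valuations by density and equivariance --- would not by itself establish that the full list of conditions holds for \emph{all} smooth valuations without a structural argument of the kind the symmetry lemma provides.
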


This theorem  allows us to make the following definition. 

\begin{Definition} \label{def:FT}
	If  $\phi\in\Val^\infty(V)$ is a smooth valuation, then its $\FC$-transform  is the unique smooth  valuation
	$\FC\phi\in \Val^\infty(V^*)\otimes \dens(V)$ satisfying 
	$$  \tau(\FC \phi)= \FC^0\tau(\phi).$$ 
\end{Definition}

The same equation also defines the  $\FC$-transform of generalized valuations $\phi\in \Val^{-\infty}(V)$.  It will be seen in Theorem \ref{thm:F=F} that the $\FC$-transform coincides with the Alesker--Fourier transform.

With this definition  and the description of the algebraic operations of valuations in terms of the $0$-homogeneous current, the main properties of the Alesker--Fourier transform are for the $\FC$-transform a direct consequence of the corresponding properties of the Fourier transform of functions. 
We write $\FC_V$ for the $\FC$-transform of valuations on $V$  in case we consider several vector spaces at the same time.

\begin{Theorem} \label{thm:properties} The $\FC$-transform of valuations has the following properties:
	\begin{enumerate}
		\item $\FC$ commutes with the natural action of $\mathrm{GL}(V)$.
		\item Inversion formula: $(\FC_{V^*}\times \id)\circ \FC_V\phi =(-\id)^*\phi$.
		\item  \label{F:injection} Let $i:V\hookrightarrow W$ be an injective linear map, and $p=i^\vee:W^*\to V^*$. Then for $\phi\in \Val^\infty(W)$, $$\FC_V( i^* \phi) = p_* (\FC_W\phi).$$
		\item \label{F:extProd}$\FC (\phi \boxtimes \psi)= \FC\phi \boxtimes \FC\psi$ for  $\phi\in \Val^\infty(V)$, $\psi\in \Val^{\infty}(W)$.
		\item $\FC$ intertwines product and convolution: for $\phi,\psi \in\Val^\infty(V)$, $$\FC (\phi\cdot \psi)=\FC\phi\ast\FC\psi.$$ 
		\item  \label{F:sa} Self-adjointness: $\langle \FC u,  \theta \rangle = \langle u, \FC \theta\rangle  $ for $u \in \Val^{-\infty}(V)$, $\theta\in \Val^\infty(V^*)$.
		\item  Let $p:V\to W$ be a surjective linear map, and $i=p^\vee:W^*\to V^*$. Then for $\psi\in\Val^{-\infty}(W)$, $$\FC_V  ( p^* \psi)= i_*(\FC_W\psi).$$  
	\end{enumerate}
\end{Theorem}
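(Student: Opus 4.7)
My plan is to translate each identity into a statement about the $0$-homogeneous currents $\tau(\phi)$ and reduce it to the corresponding identity for the Fourier transform of (generalized) differential forms, concluding via the injectivity of $\tau$ established in Proposition~\ref{prop:forms_121_valuations}. Throughout, the defining relation $\tau(\FC\phi)=\FC^0\tau(\phi)$ lets me move freely between a valuation and its current, so I will only have to verify the equality of currents on the Fourier side.

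Property (1) will be immediate: both $\tau$ and $\FC^0$ are natural under linear isomorphisms, so for $g\in \GL(V)$ one computes $\tau(\FC(g\cdot \phi))=\FC^0\tau(g\cdot\phi)=g_*\FC^0\tau(\phi)=\tau(g\cdot \FC\phi)$, with the Jacobian absorbed into the density factor, and Proposition~\ref{prop:forms_121_valuations} closes the argument. Property (2) will transcribe the classical involutivity $\FC^2=(-\id)^*$ on $0$-homogeneous tempered currents: after two applications of $\FC^0$ and the canonical identification $V^{**}\simeq V$, one recovers $(-\id)^*\tau(\phi)=\tau((-\id)^*\phi)$ by naturality of $\tau$.

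Properties (3), (4), and (7) will be transcriptions, through $\tau$, of standard Fourier identities on forms. For an injection $i\colon V\hookrightarrow W$ with dual surjection $p=i^\vee\colon W^*\to V^*$, the classical fact that the Fourier transform intertwines restriction of a form from $W$ to $V$ on one side with integration along the fibres of $p$ on the other yields (3), once Proposition~\ref{prop:pullback_linear} is used to compute $\tau(i^*\phi)$ and Proposition~\ref{prop:pushforward_linear} to compute $\tau(p_*\FC_W\phi)$. Property (4) will reduce, via Proposition~\ref{prop:exterior_product}, to the multiplicativity of the Fourier transform under exterior products of Schwartz forms. Property (7) is the dual statement to (3) and follows by the same pattern. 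Combining (3), (4), and (7) with the factorizations $\phi\cdot\psi=\Delta^*(\phi\boxtimes\psi)$ and $\phi*\psi=a_*(\phi\boxtimes\psi)$ and the observation that $\Delta^\vee=a$, property (5) is a formal consequence:
\[\FC(\phi\cdot\psi)=\FC\Delta^*(\phi\boxtimes\psi)=a_*\FC(\phi\boxtimes\psi)=a_*(\FC\phi\boxtimes\FC\psi)=\FC\phi\ast\FC\psi.\]

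Finally, property (6) will reduce to the Parseval identity for tempered distributions, but only once the valuation pairing $\Val^{-\infty}(V)\times(\Val^\infty(V^*)\otimes\dens(V))\to\CC$ has been expressed in terms of the currents $\tau(u)$ and $\tau(\theta)$. This reformulation is the main obstacle I anticipate: the natural pairing is not literally the duality between distributional and Schwartz forms, but is built from the normal-cycle description~\eqref{eq:smoothVal} and therefore interacts non-trivially with the Rumin differential, the $0$-homogeneous extension $r^0$, and the $\delta_0$ term in the definition of $\tau$. A secondary source of bookkeeping running through the entire proof is tracking the density, orientation, and sign factors---in particular the $(-1)^{n-k}$ in the definition of $\tau$ and the signs coming from the Hodge star in $\FC^0$---through every composition.
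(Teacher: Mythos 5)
Your outline for items (1)--(4) and (7) matches the paper's argument: each identity is translated to the level of $0$-homogeneous currents via the relation $\tau(\FC\phi)=\FC^0\tau(\phi)$, and then one invokes Propositions~\ref{prop:pullback_linear}, \ref{prop:pushforward_linear}, \ref{prop:exterior_product}, Lemma~\ref{lem:inversion_F0}, Lemma~\ref{lem:extProduct}, and the Fourier compatibility statements (Proposition~\ref{prop:fourier_interchange2}, Lemma~\ref{Fourier:funct}). Two points deserve a closer look.

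For item (5), your chain
\[
\FC(\phi\cdot\psi)=\FC\Delta^*(\phi\boxtimes\psi)=a_*\FC(\phi\boxtimes\psi)=a_*(\FC\phi\boxtimes\FC\psi)=\FC\phi\ast\FC\psi
\]
silently applies item (3) to $\zeta=\phi\boxtimes\psi$, but $\zeta$ lies only in $\Val(V\times V)$, not in $\Val^\infty(V\times V)$, while item (3) is stated only for smooth valuations. In the paper this is a genuine issue and is handled by a regularization argument: one sets $\zeta_j=\zeta\ast\mu_j$ for an approximate identity $\mu_j$ on $\GL(V\times V)$, applies items (3) and (4) to the smooth $\zeta_j$, and passes to the limit using the sequential continuity of $\Delta^*$ and $a_*$ in the H\"ormander topology on the relevant subspaces of valuations with restricted wavefront. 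Without some such continuity argument, the equality $\FC\Delta^*(\zeta)=a_*\FC(\zeta)$ for non-smooth $\zeta$ is not a formal consequence of (3), and your proof has a gap here.

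For item (6), you propose to re-express the Goodey--Weil pairing between $\Val^{-\infty}(V)$ and $\Val^\infty(V^*)\otimes\Dens(V)$ in terms of the currents $\tau(u)$, $\tau(\theta)$ and then appeal to Parseval; you flag this reformulation as the main obstacle and do not carry it out. The paper avoids it entirely by an algebraic argument: Lemma~\ref{lemma:pairings} identifies the functional $\phi\otimes\alpha\mapsto(\phi\ast\psi\otimes\beta)(\{0\})$ with $\langle\alpha,v\rangle((-\id)^*\phi\cdot\psi)_n$, and combined with $\FC\chi=\vol\otimes\vol^*$, the homomorphism property (5) and the inversion formula (2), this yields self-adjointness without any appeal to Parseval. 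Your analytic route is not obviously wrong, but it is a different strategy that still requires the pairing-to-current identity, which is not established in the paper and would be additional work; in particular be careful that the $\delta_0$ contribution in $\tau$ and the twist by $\Dens(V)$ enter correctly.
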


Using the above properties of the $\FC$-transform we finally prove
\begin{Theorem} \label{thm:F=F}
	The $\FC$-transform and the Alesker--Fourier transform coincide: $\FC= \FF$. 
\end{Theorem}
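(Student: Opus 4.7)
My plan is to prove Theorem~\ref{thm:F=F} by first identifying the two transforms on the even subspace of $\Val^\infty(V)$ and then propagating to the odd subspace via the exterior product compatibility from Theorem~\ref{thm:properties}(4). A guiding principle is that Theorem~\ref{thm:properties} equips $\FC$ with every functorial property known to characterize the Alesker--Fourier transform, so the remaining task is purely to match them on an explicit class and bootstrap.

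For the even part, I would compare $\FC\phi$ directly with the description of $\FF\phi$ on even valuations given in \cite[Corollary 3.9]{DorrekSchuster:AF}, where $\FF\phi$ is realized via the classical Fourier transform of functions on the sphere. Unwinding $\tau(\phi)$ for even $\phi$ (the Rumin differential takes a simple form here because half of the contact-horizontal content vanishes under the antipodal symmetry) and applying $\FC^0$ should produce the same $0$-homogeneous current as $\tau(\FF\phi)$, so that Definition~\ref{def:FT} yields $\FC\phi = \FF\phi$ on $\Val^{\infty,+}(V)$.

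For the odd part, set $T := \FF^{-1}\circ\FC\colon \Val^\infty(V)\to\Val^\infty(V)$. Combining Theorem~\ref{thm:properties} with the analogous known properties of $\FF$, the operator $T$ is a $\GL(V)$-equivariant automorphism of the Alesker product algebra that commutes with pullbacks, pushforwards, and the exterior product $\boxtimes$; the first step then yields $T = \id$ on the even subspace, functorially in $V$. For odd $\phi_1\in\Val^\infty(V_1)$ and $\phi_2\in\Val^\infty(V_2)$, the exterior product $\phi_1\boxtimes\phi_2$ is even on $V_1\times V_2$, so
\[
T\phi_1 \boxtimes T\phi_2 \;=\; T(\phi_1\boxtimes\phi_2) \;=\; \phi_1\boxtimes\phi_2.
\]
From the rank-one nature of decomposable exterior products, one deduces $T\phi_1 = c\phi_1$ on the odd subspace with a single scalar $c$ (independent of $\phi_1$ by linearity of $T$); the inversion formula of Theorem~\ref{thm:properties}(2) then forces $c^2 = 1$, and the sign $c = +1$ is pinned down by a direct computation on one concrete odd valuation (for example, a nontrivial odd degree-$1$ valuation on $\mathbb{R}^2$, where both transforms can be computed explicitly).

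The main obstacle will be making precise that the identity $T\phi_1\boxtimes T\phi_2 = \phi_1\boxtimes\phi_2$ forces $T$ to act as a scalar on odd valuations. This reduces to a suitable injectivity/non-degeneracy property of the exterior product $\Val^\infty(V_1)\otimes \Val^\infty(V_2)\hookrightarrow \Val^\infty(V_1\times V_2)$ on decomposable tensors, which must be established without invoking Alesker's irreducibility theorem; this is where the explicit description in terms of $0$-homogeneous currents from the earlier sections of the paper is expected to play a crucial role.
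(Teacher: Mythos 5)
Your plan is genuinely different from the paper's. The paper invokes Alesker's irreducibility theorem explicitly (it says so at the start of the proof): products of elements of $\Val_1^\infty(V)$ span a dense subspace of $\Val_k^\infty(V)$, so $\FC=\FF$ reduces to degree one, where Schur's lemma on the irreducible $\GL_n(\RR)$-modules $\Val_1^{\pm,\infty}(V)$ gives $\FC=c_\pm\FF$; functoriality under restrictions and projections then pins $c_\pm$ independently of $n$, the inversion formula gives $c_\pm^2=1$, and explicit computations on $\RR^1$ (for $c_+$) and $\RR^2$ (for $c_-$, using Lemma~\ref{lemma:plane}) finish. You instead want an even/odd bootstrap via $\boxtimes$, aiming to avoid irreducibility. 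The two routes share only the explicit $\RR^2$ odd computation.

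The serious gap is your even-case base step. \cite[Corollary 3.9]{DorrekSchuster:AF} treats only a special class of even valuations, not a spanning or characterizing family for $\Val^{+,\infty}(V)$, so ``comparing $\FC\phi$ with the description there'' cannot yield $\FC=\FF$ on the whole even subspace. Any known description of $\FF$ on even valuations ultimately rests on the Klain embedding together with density/surjectivity statements (cosine transform surjectivity) that are of the same depth as, and indeed bound up with, Alesker's irreducibility theorem; the paper openly uses irreducibility for exactly this verification, and your ambition to bypass it is not realized. If you wanted to salvage the even step without irreducibility, you would at least need to (i) compute the Klain section of $\FC\phi$ directly from $\tau(\phi)$ via the restriction/pushforward formulas of Propositions~\ref{prop:pullback_linear}, \ref{prop:restrict_delta}, (ii) show $\mathrm{Kl}_{\FC\phi}(E)=\mathrm{Kl}_\phi(E^\perp)$ for all $E$, and (iii) invoke Klain's injectivity theorem; none of this is attempted. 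Your odd-case $\boxtimes$ argument, by contrast, is essentially sound once the even case and the injectivity of $\boxtimes$ on nonzero simple tensors are in hand---note only that you must argue, using several degrees $k,l$ and several spaces, that the eigenvalue $\lambda_k(V)$ satisfies $\lambda_k(V_1)\lambda_l(V_2)=1$ for all choices, whence it is a single universal sign---but you already flag the $\boxtimes$-injectivity as an open point.
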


 Taken together, the properties \eqref{F:injection} and \eqref{F:surjection}  imply the identity 
$$\FC \circ f^* = (f^\vee)_* \circ \FC$$
 on smooth valuations for every linear map $f$ with dual map $f^\vee$. 
 
The  self-adjointness \eqref{F:sa} of the Alesker--Fourier transform was  already established in the PhD thesis of the first-named author \cite{faifman_thesis}, using Alesker's original definition, and employed to extend the Alesker--Fourier transform to the space of generalized valuations.  
This extension  gives formal meaning to properties \eqref{F:extProd}  and \eqref{F:surjection}, which  were previously conjectured by Alesker~\cite[p. 17]{AleskerFu:Barcelona}. While properties \eqref{F:injection} and \eqref{F:surjection} are easily seen to be equivalent   via self-adjointness and the inversion formula, property \eqref{F:extProd} is  proved here for the first time.

\subsection{Relation to other work}
The relevance  of the classical Fourier transform for questions about volumes of  sections and projections of convex bodies was realized in 1990s,  when it led to a complete  solution of the Busemann--Petty problem.  The application of Fourier analytic techniques to problems in convex geometry has developed into a rich area of research, see the papers \cite{Ball:Cube,Gardner_etal:AnalyticSolution,Koldobsky:Intersection,Koldobsky_etal:Projections, fourier_santalo, nazarov_mahler}, the lecture notes by Ryabogin and Zvavitch \cite{RyaboginZvavitch:Methods}, and Koldobsky's monograph \cite{Koldobsky:FourierAnalysis} for more information.

As already mentioned above, that the Alesker--Fourier transform can be linked to the Fourier transform on functions was first  observed by Dorrek and Schuster \cite{DorrekSchuster:AF}. 
This special case motivated us to look for such a connection in general.

The Alesker--Fourier transform has important applications  in integral geometry.    Kinematic formulas are a centerpiece of integral geometry, first studied for the orthogonal Euclidean group by Blaschke,  Chern, and Santal\'o, see the books by  Santal\'o~\cite{Santalo} and Klain and Rota~\cite{KlainRota}. For convex bodies $A,B$ in $\RR^n$ they come in two flavors, namely intersectional
$$ \int_{G\ltimes \RR^n} \chi(A \cap g B) \,dg = \sum_{i,j} c_{ij} \mu_i(A) \mu_j(B)$$ 
and additive 
$$\int_{G} \vol(A+ gB)\, dg=\sum_{i,j} d_{ij} \mu_i(A) \mu_j(B),$$ 
where  $G$ is a compact Lie group acting transitively on the unit sphere, integration is with respect to the Haar measure, the $\mu_i$ are a basis of $\Val^G$, the space of $G$-invariant valuations, and the constants $c_{ij},d_{ij}$ are independent of $A$ and $B$. More generally, the Euler characteristic and volume may be replaced by any of the $\mu_i$. 

Explicitly determining the constants in the  kinematic formulas is a challenge.  A key  insight due to Fu \cite{Fu:Unitary} is that they are the structure constants of the algebras of $G$-invariant valuations for product and convolution. 
Dually to the  Alesker--Fourier transform intertwining product and convolution, it intertwines  intersectional and  additive kinematic formulas. Since the  convolution is typically simpler to compute than the Alesker product, this allows  at least in principle  to deduce both types of kinematic formulas from the convolution table of the invariant valuations.

 For the unitary groups  $\mathrm U(n)$ and $\mathrm {SU}(n)$, this program has been successfully completed  by Bernig and Fu \cite{BernigFu:Hig, Bernig:SU}. While determining the  Alesker--Fourier transform is relatively easy in these cases, in the works of Bernig and Solanes \cite{BernigSolanes:ClassificationH2, BernigSolanes:IGH2} on the integral geometry of the quaternionic plane and Bernig and Hug \cite{BernigHug:Tensor} on the integral geometry of tensor valuations, this step relies on a sizable and difficult argument. We have not attempted to do this, but it seems plausible  that our explicit description of the  Alesker--Fourier transform of valuations could simplify these arguments.

The Alesker--Fourier transform can also be used to construct the  Holmes--Thompson intrinsic volumes in a normed space. Crofton formulas in normed spaces were first constructed Schneider and Wieacker~\cite{SchneiderWieacker:CroftonFinsler} and  Alvarez-Paiva and Fernandes~\cite{APF:CroftonFinsler}, which were then used by Bernig~\cite{Bernig:CroftonFinsler} to construct a natural family of valuations on all normed spaces, now called intrinsic volumes, extending the Holmes--Thompson volume on all flats. As observed by Fu~\cite[Theorem 2.3.22]{Fu:Barcelona}, the simplest definition of those valuations is in fact as the (inverse)  Alesker--Fourier transform of the mixed volume with the polar of the unit ball, explicitly
\[ V^{\mathrm{HT}}_k=  \frac{1}{\omega_{k}} \binom{n}{k}  \FF^{-1} V(B^\circ[k], \bullet[n-k]),\]
where $\omega_k$ denotes the volume of the $k$-dimensional Euclidean unit ball. 
 Moreover, for non-symmetric normed spaces the latter is the only known way to define the intrinsic volumes so that they are generated from the first one under the Alesker product, see \cite{WannererFaifman:Finsler}. 

Let us finally point out that the Alesker--Fourier transform was used to deduce  new inequalities for mixed volumes of  convex bodies from the recently established Hodge--Riemann relations on the space of smooth valuations \cite{Alesker:Kotrbaty, Kotrbaty:HR, KotrbatyWannerer:AF}.

\subsection{Acknowledgements}
We are grateful to Semyon Alesker for his valuable comments on an earlier draft.

\section{Preliminaries}

If $V$ is an  $n$-dimensional real vector space, we denote by $\largewedge^\bullet V = \bigoplus_{k=0}^n \largewedge^k V$ the exterior algebra of $V$. A density on $V$ is a function $\mu\colon \largewedge^n V \to \CC $ with the property that $\mu(t \omega)=  |t| \mu(\omega)$ holds for all $t\in \RR$. The $1$-dimensional space of densities is denoted by $ \Dens(V)$,  and its nonzero elements are naturally identified with the complex-valued Lebesgue measures on $V$. An orientation function on $V$ is a function $\epsilon\colon \largewedge^n V\setminus\{0\} \to \CC $ satisfying
 $\epsilon(t \omega)=  \sgn(t) \epsilon(\omega)$ for all $t\in \RR\setminus \{0\}$.
The $1$-dimensional space of  orientation functions is denoted by $\ori(V)$. We will make use of the following canonical isomorphisms 
\begin{align*}
	\Dens(V)\otimes \ori(V) &\simeq \largewedge^n V^*, \\
	\ori(V) &\simeq \ori(V^*),\\
	\largewedge^k V &\simeq  \largewedge^{n-k} V^*  \otimes \largewedge^n V. 
\end{align*}
Moreover, if $0\to U\to V\to W\to 0$ is exact, then
$$ \Dens(U)\otimes \Dens(W)\simeq \Dens(V)$$ 
and $$\ori(U)\otimes \ori(W)\simeq \ori(V).$$

The Euler vector field $E(x)=x$ is defined on any vector space $V$. 

\subsection{Generalized forms}

The theory of distributions on a smooth manifold  is a bit more subtle than that of distributions on open subsets of $\RR^n$. We recall in this section the relevant concepts and refer the reader to \cite[Chapter VI]{GuilleminSternberg:GA} for more information.  

 By $\Dens(M)$ we denote the line bundle over $M$ with fiber $\Dens(T_xM)$ over $x\in M$. Its smooth sections are naturally identified with the smooth measures on $M$. The orientation bundle $\ori (M)$ of $M$ is defined analogously.  The space of generalized sections of a vector bundle $\mathcal E\to M$ is defined by $C^{-\infty}(M,\mathcal E)= (C_c^\infty(M,\mathcal E^*\otimes \Dens(M)))^*$. The space of generalized differential $k$-forms on a manifold $M$ is therefore 
$$  \Omega^k_{-\infty}(M)=  \Omega_c^{n-k}(M, \ori(M))^*= C_c^\infty( M, \largewedge^{n-k}T^*M \otimes  \ori(M))^*.$$
 It contains the dense subspace of smooth forms, given by
\[\langle \omega, \psi\rangle=\int _M \omega\wedge \psi, \quad \omega\in\Omega^k(M), \psi\in  \Omega_c^{n-k}(M, \ori(M)). \]

Most operations on smooth forms extend to generalized forms, including the pullback under a smooth  proper submersion $f\colon M\to N$, the exterior derivative $d$, and  the contraction with a vector field $i_X$. For $\omega\in\Omega^k_{-\infty}(M)$, test forms $\phi\in\Omega^{n-k}_c(M, \ori(M))$, $\eta\in\Omega^{n-k+1}_c(M, \ori(M))$, $\psi\in\Omega^{n-k-1}_c(M, \ori(M))$, and $X$ a vector field we have
\begin{align*}\langle f^*\omega, \phi\rangle&=\langle \omega, f_*\phi\rangle,\\
	\langle i_X\omega, \eta\rangle&=(-1)^{k+1}\langle \omega, i_X\eta\rangle,
	\\\langle d\omega, \psi\rangle&=(-1)^{k+1}\langle \omega, d_\nabla\psi\rangle, 
\end{align*}
where $f_*$ denotes integration along the fibers and  $\nabla$ is the canonical connection on $\ori(M)$.

Let $V, F$ be finite-dimensional real vector spaces.
By a tempered generalized differential $k$-form on $V$ with values in $F$ we understand a continuous linear functional on 
$ \Omega_{\mathcal S}^{n-k}(V,  F^*\otimes \ori(V))$. We denote it by $\Omega_{\mathcal S'}^k(V, F)=\mathcal S'(V, \wedge^k V^*\otimes F)$. 
Equivalently, a tempered generalized form is a generalized form with tempered distributions as coefficients with respect to a basis. Thus
$$\Omega_{\mathcal S'}^k(V, F)\subset \Omega^k_{-\infty}(V,F).$$
In the following we will sometimes have to replace the vector space $F$ by a canonically isomorphic vector space $G$. Let us be clear that this means that if $\omega\in\Omega^k_{\calS'}(V,F)$ is a generalized form, $f\colon F\to G$ is a linear isomorphism, and $\eta\in \Omega^{n-k}_\calS(V, G^*\otimes \ori(V))$ is a test form, then  we define 
\begin{equation}
	\label{eq:changingF}\langle f\circ \omega,\eta\rangle = \langle \omega, f^{\vee}\circ \eta\rangle, 
\end{equation} where $f^\vee \colon G^* \to F^*$ denotes the dual map.

A generalized form $\omega\in\Omega_{-\infty}(V, F)$ is $r$-homogeneous if $m_\lambda^*\omega=\lambda^r\omega$, for all $\lambda>0$. For any $r$, an $r$-homogeneous generalized form is tempered. 

We denote by $\PP_+(V)$ the oriented projectivization of $V$. The manifold $\PP_V= V\times \PP_+(V^*)$ carries a contact structure defined by the hyperplanes $H_{p,[\xi]} = \Ker \xi\circ d\pi$ in  $T_{p,[\xi]} \PP_V$, where $\pi\colon  \PP_V\to V$ denotes the projection to the first factor. A smooth differential form $\omega$ on $\PP_V$ is said to be vertical, if $\omega|_{H_{p,[\xi]}}=0$ for all $p,[\xi]$. A generalized form $\omega$ is called vertical if 
$ \langle \omega, \psi\rangle = 0$ for all vertical test forms $\psi$.

By $\delta_p$ we denote the delta measure at $p$. The corresponding generalized top form, which depends on a choice of orientation $\sigma_p$ at $p$, is $\delta_p\otimes\sigma_p$. Often we will simply write $\delta_p$ also for the top form when the choice of $\sigma_p$ is clear or not important.

\section{$0$-homogeneous forms}
\subsubsection{Homogeneous extension and restriction}

Let us recall how to extend (generalized) forms on $\PP_+(V)$ to $0$-homogeneous tempered generalized forms on $V$. We moreover discuss a  characterization of such extensions. 

Let $\pi\colon V\setminus\{0\}\to \PP_+(V)$ denote the canonical projection. Observe that there is a well-defined push-forward (fiber integration) of 
twisted differential forms
$$ \pi_* \colon \Omega_{\mathcal S}^k(V,  \ori(V)\otimes F) \to \Omega^{k-1}(\PP_+(V), \ori(\PP_+(V))\otimes F).$$
Explicitly, assuming $F=\RR$,  if $\omega= f(r,\theta)  dr\wedge \pi^*\xi$ is a $k$-form, where $\xi\in\Omega^{k-1}(\PP_+(V))$, then $\pi_* \omega= (-1)^{n-k}(\int_0^\infty fdr) \xi$, so that $\langle \pi^* \phi, \omega\rangle=\langle \phi, \pi_*\omega\rangle$ for all  $\phi\in \Omega^{n-k}(\PP_+(V))$.

\begin{Definition}
	Let  $T\in \Omega^{k}_{-\infty}(\PP_+(V),F)$. We define the $0$-homogeneous extension of $T$ to $V$, denoted $r^0T\in\Omega^{k}_{\mathcal S'}(V, F)$ by
	$$ 	\langle r^0T, \omega\rangle = \langle T,\pi_* \omega\rangle, \quad \omega\in \Omega_{ \mathcal S}^{n-k}(V,   \ori(V)\otimes F^*).$$
\end{Definition}
 Clearly the restriction of $r^0T$ to $V\setminus\{0\}$ is $\pi^*T$.
\begin{Remark}
	The notation $r^0T$ is inspired by  Gelfand--Shilov \cite[Section 3.5]{GelfandShilov:I}.
\end{Remark}

\begin{Lemma}\label{lem:supp0}
	Let $S,T\in \Omega^k_{-\infty}(V)$ be $0$-homogeneous generalized forms of degree $k<n$. If $S-T$ is supported at the origin, then $S=T$.
\end{Lemma}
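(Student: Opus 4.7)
The plan is to set $R := S-T$ and show $R=0$. By hypothesis $R$ is a $0$-homogeneous generalized $k$-form on $V$ with $\Supp R \subset \{0\}$. The strategy is to write $R$ in linear coordinates, apply the classical structure theorem for distributions supported at a single point to each coefficient, and then read off the vanishing of every term from the scaling action, using crucially that $k<n$.

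Concretely, I would fix linear coordinates $x^1,\dots,x^n$ on $V$ and expand
\[ R = \sum_{|I|=k} u_I\, dx^I, \]
where the distributions $u_I$ are all supported at the origin. The structure theorem then provides finite sums $u_I = \sum_{\alpha} c_{I,\alpha}\,\partial^\alpha \delta_0$ with $c_{I,\alpha}\in\CC$.

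Under the dilation $m_\lambda(x)=\lambda x$ with $\lambda>0$, one has $m_\lambda^*(\partial^\alpha\delta_0)=\lambda^{-n-|\alpha|}\,\partial^\alpha\delta_0$ and $m_\lambda^*(dx^I)=\lambda^{k}\,dx^I$, so the generalized $k$-form $\partial^\alpha\delta_0\cdot dx^I$ is $(k-n-|\alpha|)$-homogeneous. Combining the hypothesis $m_\lambda^* R = R$ with the linear independence of the family $\{\partial^\alpha\delta_0\cdot dx^I\}_{I,\alpha}$, one obtains
\[ c_{I,\alpha}\bigl(\lambda^{k-n-|\alpha|}-1\bigr) = 0 \quad \text{for all } \lambda>0, \]
and since $k<n$ and $|\alpha|\geq 0$ the exponent is strictly negative, forcing every $c_{I,\alpha}$ to vanish. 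Thus $R=0$, i.e., $S=T$.

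The only step that is more than bookkeeping is the appeal to the structure theorem for distributions supported at a point, but this applies directly to the scalar coefficients $u_I$. Note finally that the restriction $k<n$ is genuinely necessary: for $k=n$, the top-form $\delta_0\,dx^1\wedge\cdots\wedge dx^n$ is itself $0$-homogeneous and supported at the origin, so the conclusion of the lemma fails at the top degree.
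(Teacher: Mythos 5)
Your proof is correct and is essentially the same as the paper's: both reduce to a single form supported at the origin, write it in linear coordinates, invoke the structure theorem for distributions supported at a point, and then observe that the resulting homogeneity degree $k-n-|\alpha|$ (equivalently, $-n-|\alpha|$ for the coefficient distributions versus the required $-k$) can never match when $k<n$. The only cosmetic difference is that you track the homogeneity of the full form rather than of its scalar coefficients, which changes nothing.
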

\begin{proof}
	It suffices to prove that $T=0$ if $T$ is supported at the origin. Fixing any basis of $V$, the corresponding coefficients of $T$ are $(-k)$-homogeneous generalized functions that are supported at the origin. Any such function is a linear combination of the delta function and its derivatives, and consequently cannot be $(-k)$ homogeneous for $k<n$, unless it is zero.
Hence $T=0$ as claimed. 
\end{proof}

Recall that $E$ denotes the Euler vector field $E(x)=x$.

\begin{Proposition}\label{lem:zero_homogeneous}
	Let $T\in \Omega^{k}_{-\infty}(\PP_+(V))$. Then $T_0= r^0 T\in\Omega^k_{-\infty}(V)$ has the following properties
	\begin{enumerate}
		\item $i_E T_0 =0$.
		\item $T_0$ is $0$-homogeneous.
	\end{enumerate}
	Conversely, if $T_0\in \Omega^k_{-\infty}(V)$ satisfies the above properties, then either $0\leq k\leq n-1$, whence there exists a unique $T$ such that $r^0 T= T_0$, or $k=n$ and $T$ is a multiple of a delta $n$-form supported at the origin.
\end{Proposition}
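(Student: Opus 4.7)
The plan is to treat the two directions of the proposition in turn, with the converse splitting naturally into the generic case $k\le n-1$ and the exceptional case $k=n$.

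For the forward direction, I would verify (1) by exploiting that $E$ spans the kernel of $d\pi$ on $V\setminus\{0\}$: since $r^0T=\pi^*T$ there, one has $i_E(r^0T)=0$ off the origin, so $i_ET_0$ is a generalized $(k-1)$-form supported at the origin. Because $i_E$ commutes with $L_E$ (hence preserves $0$-homogeneity), $i_ET_0$ is $0$-homogeneous of form degree $k-1\le n-1<n$, and Lemma~\ref{lem:supp0} forces it to vanish. For (2), the $0$-homogeneity of $r^0T$ reduces by duality to the radial-scaling invariance of the fiber integration $\pi_*$: splitting a test form into its radial and angular components, the substitution $s=r/\lambda$ in the radial integral shows $\pi_*(m_{1/\lambda})^*\omega=\pi_*\omega$ for every $\lambda>0$, whence $m_\lambda^*r^0T=r^0T$. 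The orientation twist is untouched since $\lambda>0$.

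For the converse, I begin with a generalized form $T_0\in\Omega^k_{-\infty}(V)$ satisfying $i_ET_0=0$ and $m_\mu^*T_0=T_0$ for all $\mu>0$. Using polar coordinates to identify $V\setminus\{0\}\cong(0,\infty)\times\PP_+(V)$, with $E=r\partial_r$, the basic-form argument, carried out at the level of generalized forms, shows that $T_0|_{V\setminus\{0\}}$ descends uniquely to some $T\in\Omega^k_{-\infty}(\PP_+(V))$: the condition $i_ET_0=0$ kills the $dr$-component of the polar decomposition, and $L_ET_0=0$ (equivalent to $0$-homogeneity) kills the $r$-dependence of the remaining angular part. These facts transfer from the smooth to the distributional setting by testing against smooth compactly supported $(n-k)$-forms on $V\setminus\{0\}$ that are adapted to the polar splitting. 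The difference $r^0T-T_0$ is then a $0$-homogeneous generalized $k$-form supported at the origin; for $k\le n-1$, Lemma~\ref{lem:supp0} gives $r^0T=T_0$, and uniqueness of $T$ is immediate from the injectivity of $\pi^*$ on $V\setminus\{0\}$.

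In the remaining case $k=n$, the descent argument collapses because $\dim\PP_+(V)=n-1$ forces every generalized $n$-form on $\PP_+(V)$ to vanish. Here I would argue instead that $T_0|_{V\setminus\{0\}}=0$ directly: choosing any smooth volume form $\Omega$ on $V$, one can write $T_0=f\cdot\Omega$ for a distribution $f$ on $V\setminus\{0\}$, and then $i_ET_0=f\cdot i_E\Omega$ with $i_E\Omega$ a nowhere-vanishing smooth $(n-1)$-form; the hypothesis $i_ET_0=0$ therefore forces $f=0$. Hence $T_0$ is supported at the origin, and a generalized $n$-form supported at a single point is a finite combination of $\delta_0$ and its derivatives; only the $\delta_0$ term is $0$-homogeneous as a top form (derivatives of $\delta_0$ carry strictly negative homogeneity), so $T_0$ is a scalar multiple of $\delta_0$.

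The step I expect to be the main obstacle is the distributional descent from $V\setminus\{0\}$ to $\PP_+(V)$ in the generic case: the polar decomposition and the elimination of the $dr$-component are transparent for smooth forms but require a careful duality argument, or a local coordinate computation with test forms adapted to the polar splitting, to be justified for generalized forms. Once this descent is in place, the remainder of the proof is direct bookkeeping with Lemma~\ref{lem:supp0}.
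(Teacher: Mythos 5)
Your forward direction and your treatment of the exceptional case $k=n$ match the paper's argument: for (1), $i_E T_0$ is a $0$-homogeneous $(k-1)$-form supported at the origin and so vanishes by Lemma~\ref{lem:supp0}; for $k=n$, the coefficient of $T_0$ with respect to a volume form is killed off the origin by $i_E T_0=0$ and then forced to be a multiple of $\delta_0$ by homogeneity. (One small imprecision: $i_E\Omega$ vanishes at the origin, though it is indeed nowhere-vanishing on $V\setminus\{0\}$, which is what you need.)

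The genuine gap is exactly where you flag it: the distributional descent in the case $k\le n-1$. Saying that $i_ET_0=0$ ``kills the $dr$-component'' and $\mathcal L_E T_0=0$ ``kills the $r$-dependence'' is the right heuristic for smooth basic forms, but the proposal stops short of an argument that works for generalized forms, and this is where all the content of the converse lies. The paper's approach is dual: one \emph{defines} $T$ by $\langle T,\pi_*\phi\rangle := \langle T_0,\phi\rangle$ for compactly supported test forms $\phi$ on $V\setminus\{0\}$, and the crux is well-definedness, i.e.\ that $\pi_*\phi=0$ forces $\langle T_0,\phi\rangle=0$. To see this, first note that (1) and (2) imply $i_E\,dT_0=\mathcal L_E T_0=0$. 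Then write $\phi = f(r,u)\,dr\wedge\xi + \eta$ in polar coordinates with $\xi,\eta$ pulled back from the sphere. The hypothesis $\pi_*\phi=0$ says $\int_0^\infty f\,dr=0$, so $f=\partial h/\partial r$ for a compactly supported $h$, and one sets $\zeta = h\xi$. Using $i_ET_0=0$ to discard the angular term $\eta$ (and to see $T_0\wedge\zeta=0$ pointwise), one computes
\[
\langle T_0,\phi\rangle = \langle T_0, \tfrac{\partial h}{\partial r}\,dr\wedge\xi\rangle = \langle T_0, d\zeta\rangle = (-1)^{k+1}\langle dT_0,\zeta\rangle = 0,
\]
the last equality because $i_EdT_0=0$ while $\zeta$ is purely angular. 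With $T$ so defined, $r^0T-T_0$ is supported at the origin and Lemma~\ref{lem:supp0} finishes the argument. Without this (or an equivalent) computation, the proposal does not establish well-definedness, and the converse is unproved.
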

\begin{proof}
		It is clear that $T_0=r^0 T$ has the second stated property, and it holds that $i_E(T_0|_{V\setminus\{0\}})=0$.
		Thus $i_ET_0$ is a $0$-homogeneous $(k-1)$-form supported at the origin, but this is impossible for $k-1<n$. Therefore, $i_ET_0=0$.
	
	For the converse statement, fix a Euclidean structure on $V=\R^{n}$, and assume first $k\leq n-1$. The uniqueness of $T$ is straightforward. 
	To prove existence let us first observe that (1) and (2) imply
	$$ i_E d T_0 = \mathcal L_E T_0 = 0.$$
	
	We define $\langle T, \omega \rangle := \langle T_0 ,\phi\rangle $ for $\omega= \pi_* \phi$, where $\phi\in\Omega^{n-k}_c(\R^n\setminus\{0\})$ is a compactly supported test form. For this to be well-defined it suffices to show that $\langle T_0, \phi\rangle =0$ if $\pi_* \phi =0$. We may introduce polar coordinates $r>0,u\in S^{n-1}$ on $V\setminus\{0\}$ and write 
	$$ \phi = f(r,u) dr\wedge  \xi  + \eta,$$
	where $\xi,\eta\in u^*(\Omega(S^{n-1}))$.
	As a consequence,  $\pi_* \phi = (-1)^{k}\left( \int_0^\infty f dr \right) \xi$. 
	By assumption we have $\int_0^\infty f dr=0$, so that $f= \frac{\partial h}{\partial r}$ for the compactly supported function $h(r,u)=\int_0^r f(t,u) dt$. 
	Put $\zeta= h \xi$. 
	Observe that whenever $i_E\psi=0$ and $\theta\in u^*\Omega(S^{n-1})$, and $\psi\wedge \theta$ is a top degree form, one has the pointwise equality $\psi\wedge \theta=0$. Using that $i_ET_0=0$ and $i_E dT_0=0$, we find
	$$ \langle T_0 , \phi\rangle =  \langle T_0,  \frac{\partial h}{\partial r} dr\wedge \xi\rangle = \langle T_0, d\zeta\rangle =(-1)^{k+1} \langle dT_0, \zeta\rangle =0$$ 
	as required. 
	By construction,  $r^0T -T_0$ is supported at the origin.  Applying  Lemma~\ref{lem:supp0} yields $r^0T= T_0$ as desired.
	
	Now if $k=n$, we may write $T_0=f(x)dx_1\wedge \dots\wedge dx_n$ for some $f\in C^{-\infty}(\R^n)$. The condition $i_ET_0=0$ readily implies that $f$ is supported at the origin. As it is also $(-n)$-homogeneous, it must be a multiple of the delta function, completing the proof.
	
\end{proof}
Note that if $T\in\Omega^k(\mathbb P_+(V))$ is a smooth form, then $r^0T$ is smooth on $V\setminus \{0\}$.

\begin{Lemma}\label{lem:tau_L1}
	Let $\tau\in\Omega^k(\mathbb P_+(V))$ and $\omega=r^0\tau\in\Omega^k_{-\infty}(V)$. Then $\omega$ is locally integrable.
\end{Lemma}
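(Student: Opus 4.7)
The plan is to express $\omega$ in standard coordinates on $V$ and observe that its coefficients are bounded, which directly gives local integrability.

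First I would fix a basis of $V$ with coordinates $x_1,\dots,x_n$ and write the restriction of $r^0\tau$ to $V \setminus \{0\}$, which equals $\pi^*\tau$, as
\[ \pi^*\tau = \sum_{|I|=k} f_I(x)\, dx_I, \]
where each $f_I \in C^\infty(V\setminus\{0\})$. Because $\pi^*\tau$ is $0$-homogeneous and each $dx_I$ is $0$-homogeneous under dilations, each coefficient $f_I$ is itself a smooth $0$-homogeneous function on $V \setminus \{0\}$, and hence $f_I(x) = f_I(x/|x|)$ for any choice of norm.

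Since $S^{n-1}$ is compact and $f_I$ is continuous there, $\sup_{u \in S^{n-1}} |f_I(u)|$ is finite, so $f_I$ is bounded on $V \setminus \{0\}$. Therefore $f_I \in L^\infty(V) \subset L^1_{\mathrm{loc}}(V)$, and the form
\[ \tilde\omega = \sum_{|I|=k} f_I(x)\, dx_I, \]
defined (arbitrarily) to be zero at the origin, is a form with locally integrable coefficients, defining a generalized $k$-form on $V$.

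Finally I would identify $\tilde\omega$ with $r^0\tau$. Both are $0$-homogeneous generalized $k$-forms (tildeness is preserved by the $L^1_{\mathrm{loc}}$ representation, and $r^0\tau$ is $0$-homogeneous by Proposition~\ref{lem:zero_homogeneous}) that agree with $\pi^*\tau$ on $V \setminus \{0\}$. Hence $\tilde\omega - r^0\tau$ is supported at the origin, and since $k \le n-1$ (as $\Omega^n(\mathbb P_+(V)) = 0$), Lemma~\ref{lem:supp0} forces $\tilde\omega = r^0\tau$. This realizes $\omega$ as a form with $L^\infty$, hence locally integrable, coefficients, completing the proof. The only subtlety is the last identification step; the rest is a direct consequence of compactness of the sphere.
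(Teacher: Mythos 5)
There is a concrete error in the middle of your argument. You claim that each $dx_I$ is ``$0$-homogeneous under dilations'' and conclude that the coefficient functions $f_I$ are $0$-homogeneous, hence bounded. This is false: for $m_\lambda(x)=\lambda x$ and $|I|=k$ one has $m_\lambda^* dx_I = \lambda^k\,dx_I$, so $dx_I$ is $k$-homogeneous. Since $\pi^*\tau$ is $0$-homogeneous, the coefficients must satisfy $f_I(\lambda x)=\lambda^{-k}f_I(x)$, i.e.\ they are $(-k)$-homogeneous, not $0$-homogeneous. For $k\ge 1$ they blow up like $|x|^{-k}$ near the origin and are certainly not bounded, so the $L^\infty$ conclusion and the ensuing ``$L^\infty\subset L^1_{\mathrm{loc}}$'' step collapse.

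The repair is short and brings you back to the paper's proof: write $f_I(x)=|x|^{-k}g_I(x/|x|)$ with $g_I\in C^\infty(S^{n-1})$, observe that $k\le n-1<n$ (which you already noted, since $\dim\mathbb{P}_+(V)=n-1$), and use that $|x|^{-k}$ is locally integrable on $\R^n$ precisely when $k<n$. The rest of your argument — that this locally integrable form and $r^0\tau$ differ by something supported at the origin, and Lemma~\ref{lem:supp0} then forces equality — is correct and is exactly what the paper does. So the overall strategy is the same as the paper's; only the homogeneity bookkeeping needs fixing, and with it the source of integrability changes from ``boundedness'' to ``$|x|^{-k}$ is integrable near $0$ because $k<n$.''
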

\begin{proof}
	Fix a Euclidean structure on $\R^n$.  On $\RR^n\setminus\{0\}$ we may write $\omega=\omega'$, where $\omega'=\frac{1}{|x|^{k}}\sum_{|I|=k} g_I(\frac{x}{|x|}) dx_I$ with $g_I\in C^\infty (S^{n-1})$.  Since $k<n$, the coefficients of $\omega'$  are locally integrable. Applying Lemma~\ref{lem:supp0} yields $\omega=\omega'$ on $\RR^n$.
\end{proof}

\subsection{Operations on $0$-homogeneous forms}

We need to know how to pullback and pushforward $0$-homogeneous generalized forms under linear maps.

\subsubsection{Pullback under linear monomorphisms}

For a vector $v\in\R^n$, we denote by $S_{ v }:\R^n\to\R^n$ the shift $x\mapsto x+ v$.

\begin{Lemma}\label{lem:restrict_not_delta}
	Assume $\omega\in\Omega^{k}_{-\infty}(\R^n)$ is smooth outside of the origin, $0$-homogeneous. Let $e:\R^j\hookrightarrow \R^n$ be a fixed monomorphism, and assume $j>k$. Then the weak limit \[\omega':=\lim_{V\setminus e(\RR^j)\ni v\to  0}e^* S_{ v}^*\omega\] exists, is $0$-homogeneous, and on $\R^j\setminus\{0\}$ coincides with $e^*\omega$. If moreover $i_E\omega=0$ then also $i_E\omega'=0$. If $\omega=r^0\tau$ for $\tau\in\Omega^k(\mathbb P_+(\R^n))$, then one has $\omega'=r^0(e^*\tau)$.
\end{Lemma}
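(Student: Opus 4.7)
The plan is to construct the weak limit $\omega'$ via pointwise convergence combined with a uniform local integrability bound, and then read off the listed properties from the defining equation. Writing $\omega=\sum_I\omega_I\,dx^I$ in standard coordinates on $\R^n$, each $\omega_I$ is smooth and $(-k)$-homogeneous on $\R^n\setminus\{0\}$, hence satisfies $|\omega_I(x)|\le C|x|^{-k}$. For $v\notin e(\R^j)$ we have $e(y)+v\ne 0$ for every $y$, so $e^*S_v^*\omega$ is a smooth form on $\R^j$, and $\omega_I(e(y)+v)\to\omega_I(e(y))$ pointwise for every $y$ with $e(y)\ne 0$.

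\textbf{Uniform $L^1_{\mathrm{loc}}$ bound.} I expect the main obstacle to be a uniform estimate on $\int_{|y|<R}|e(y)+v|^{-k}\,dy$ in $v$ small outside $e(\R^j)$. To handle this I would fix an inner product on $\R^n$, decompose $\R^n=e(\R^j)\oplus L^\perp$, and write $v=v_1+v_2$ in this splitting. Orthogonality gives $|e(y)+v|\ge|e(y)+v_1|$, and the substitution $z=e(y)+v_1\in e(\R^j)$ reduces the estimate to $\int_{|z|<C_e R+|v_1|}|z|^{-k}\,dz$, which is finite because $k<j$ and stays uniformly bounded for $|v|$ small. The hypothesis $j>k$ is essential precisely here: it is what makes the singularity of $\omega$ at the origin mild enough to restrict sensibly to the $j$-dimensional subspace.

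\textbf{Weak convergence and properties.} A standard split-integral argument then yields the weak limit: for a test form $\phi$ supported in $B_R$, the contribution from $\{|y|<\epsilon\}$ is uniformly small in $v$ thanks to the $L^1$ bound (of order $\epsilon^{j-k}$), while on $\{|y|\ge\epsilon\}$ the coefficients $\omega_I(e(y)+v)$ converge uniformly to $\omega_I(e(y))$ since $e(y)+v$ stays bounded away from $0$. This produces $\omega'\in\Omega^k_{\calS'}(\R^j)$ coinciding with $e^*\omega$ on $\R^j\setminus\{0\}$ by construction. The $0$-homogeneity follows from the identity $m_t^*(e^*S_v^*\omega)=e^*S_{v/t}^*\omega$ (which uses that $\omega$ itself is $0$-homogeneous and $e$ is linear) upon passing to the limit. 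For the $i_E$-property, decomposing $e(y)=(e(y)+v)-v$ yields the pointwise identity
\[i_{E_W}(e^*S_v^*\omega)=e^*S_v^*(i_{E_V}\omega)-e^*S_v^*(i_v\omega);\]
when $i_{E_V}\omega=0$ the first term vanishes and the second tends weakly to zero, because the coefficients of $i_v\omega$ are bounded by $C|v|\,|x|^{-k}$ and our uniform bound applies. Finally, if $\omega=r^0\tau$ then $\omega'$ is $0$-homogeneous with $i_E\omega'=0$ and agrees with $\pi^*(\tilde e^*\tau)$ on $\R^j\setminus\{0\}$, where $\tilde e\colon\PP_+(\R^j)\to\PP_+(\R^n)$ is the map induced by $e$; the uniqueness statement in Proposition~\ref{lem:zero_homogeneous}, applicable since $k<j$, then forces $\omega'=r^0(e^*\tau)$.
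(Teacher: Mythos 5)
Your proof is correct and follows the same overall plan as the paper (write $\omega$ as a locally integrable $(-k)$-homogeneous form via Lemma~\ref{lem:supp0}, verify weak convergence exploiting $k<j$, then identify the limit), but you fill in considerably more detail for the convergence step, which the paper essentially asserts. Your orthogonal splitting $v=v_1+v_2$ and the resulting uniform $L^1$ bound $\int_{|y|<R}|e(y)+v|^{-k}\,dy\lesssim(\epsilon+|v_1|)^{j-k}$ are a clean way to make the passage to the limit rigorous; the homogeneity via $m_t^*(e^*S_v^*\omega)=e^*S_{v/t}^*\omega$ is the same as in the paper.

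The one place where you take a genuinely different route is the $i_E$-property. The paper establishes $\omega'=r^0(e^*\tau)$ first (both sides are $0$-homogeneous of degree $k<j$ and agree away from $0$, so Lemma~\ref{lem:supp0} identifies them), and then reads off $i_E\omega'=0$ as a byproduct of $\omega'$ being of the form $r^0(\cdot)$. You instead prove $i_E\omega'=0$ directly via the pointwise identity $i_{E_{\R^j}}(e^*S_v^*\omega)=e^*S_v^*(i_{E_{\R^n}}\omega)-e^*S_v^*(i_v\omega)$, estimating the error term using the same uniform $L^1$ bound. Both arguments are valid; yours is more self-contained and does not rely on first establishing $\omega'=r^0(e^*\tau)$, whereas the paper's is shorter because it recycles Lemma~\ref{lem:supp0}. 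One small remark on your final step: to conclude $\omega'=r^0(e^*\tau)$ it is more direct to invoke Lemma~\ref{lem:supp0} (as the paper does) rather than the uniqueness clause of Proposition~\ref{lem:zero_homogeneous}, since what you actually have in hand is two $0$-homogeneous degree-$k<j$ forms agreeing on $\R^j\setminus\{0\}$; but this is a cosmetic difference, as the uniqueness in the Proposition is itself proved via Lemma~\ref{lem:supp0}.
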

We will subsequently denote $\omega'=e^*\omega$.
\proof
By the smoothness outside of the origin, $\omega'$ coincides with $e^*\omega$ on $\R^j\setminus\{0\}$.
Choose orthonormal coordinates such that $\R^j=\Span(e_1,\dots, e_j)$. 
Observe that since $\omega$ is $0$-homogeneous, Lemma~\ref{lem:supp0} implies \[\omega=\frac{1}{|x|^{k}}\sum_{|I|=k} g_I(\frac{x}{|x|}) dx_I\] for some $g_I\in C^\infty (S^{n-1})$. Then 
\begin{align*}\omega'&= \lim_{v\to 0} \frac{1}{|y+ v|^{k}}\sum_{I\subset\{1,\dots,j\}} g_I(\frac{y+v}{|y+v|}) \wedge_{j\in I} dy_j
	\\&=\frac{1}{|y|^{k}}\sum_{I\subset\{1,\dots,j\}} g_I(\frac{y}{|y|}) \wedge_{j\in I} dy_j.\end{align*}
Thus $\omega'$ is locally integrable and $0$-homogeneous. 

Assume now $i_E\omega=0$. By Proposition~\ref{lem:zero_homogeneous} we may write $\omega=r^0\tau$ for some $\tau\in \Omega^k(\mathbb P_+(\R^n))$. Define $\omega_1=r^0(e^*\tau)$.  Then $\omega_1$ coincides with $\pi^*(e^*\tau)= e^*(\pi^*\tau)=e^*\omega =\omega'$ outside of the origin. Thus $\omega_1=\omega'$ by Lemma~\ref{lem:supp0}  and therefore in particular $i_E\omega'=0$. This concludes the proof.

\endproof

\begin{Proposition}\label{prop:restrict_delta}
Assume $\omega\in\Omega^{k}_{-\infty}(\R^n)$ is smooth outside of the origin, $0$-homogeneous, $k<n$, and $i_E\omega=0$. Let $e:\R^k\hookrightarrow \R^n$ be a fixed monomorphism. Then for $v\in \RR^n\setminus e(\RR^k)$, the weak limit \[e^*_v\omega:=\lim_{\epsilon\to 0^+}e^* S_{\epsilon v}^*\omega\] exists, is uniform in $v\in S^{n-1}\setminus e(\RR^k)$, and equals $c([v])\delta_0 $ for some continuous function $c$ on $\mathbb P_+(\RR^n/e(\RR^k))$. 
Furthermore, if $\omega$ is closed then $c([v])$ is constant.
\end{Proposition}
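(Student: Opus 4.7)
The plan is to reduce the problem to a rescaling argument on $\PP_+(\RR^n)$. First, by Proposition~\ref{lem:zero_homogeneous} I would write $\omega=r^0\tau$ for a smooth $\tau\in\Omega^k(\PP_+(\RR^n))$, so that $\omega=\pi^*\tau$ on $\RR^n\setminus\{0\}$. The key object is the smooth map
\[
\psi_v\colon \RR^k \to \PP_+(\RR^n), \qquad \psi_v(w)=[e(w)+v],
\]
which is well defined because $v\notin e(\RR^k)$. A direct calculation using $\omega=\pi^*\tau$ and the scaling identity $d\pi|_{\lambda x}=\lambda^{-1}d\pi|_x$ gives
\[
e^* S_{\epsilon v}^*\omega \;=\; (m_{1/\epsilon})^*\psi_v^*\tau,
\]
where $m_\mu(y)=\mu y$. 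The problem thus becomes identifying the limit of a compressed smooth top-degree form on $\RR^k$.

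To prove existence and evaluate the limit, I would use the asymptotic $\psi_v(w)\to e(w/|w|)$ as $|w|\to\infty$. Since $w\mapsto e(w/|w|)$ factors through $S^{k-1}$ and thus has rank at most $k-1$, the differential $d\psi_v$ splits into a rank-$(k-1)$ leading angular part of size $O(|w|^{-1})$ and a correction of size $O(|w|^{-2})$. At least one correction factor must appear when forming a top-degree form, yielding $\psi_v^*\tau=O(|w|^{-k-1})$, which is integrable on $\RR^k$. The standard fact that $\epsilon^{-k}f(\cdot/\epsilon)\to (\int_{\RR^k} f)\,\delta_0$ weakly for $f\in L^1(\RR^k)$ then gives
\[
e^* S_{\epsilon v}^*\omega \;\xrightarrow[\epsilon\to 0^+]{}\; c([v])\,\delta_0, \qquad c([v])=\int_{\RR^k}\psi_v^*\tau.
\]
Continuity of $c$ in $v$ follows from dominated convergence with a uniform $L^1$ envelope on compact subsets of $S^{n-1}\setminus e(\RR^k)$, which also yields uniformity of the convergence there. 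The reparametrizations $\psi_{\lambda v}=\psi_v\circ m_{1/\lambda}$ for $\lambda>0$ and $\psi_{v+e(u)}=\psi_v\circ T_u$, where $T_u(w)=w+u$, combined with invariance of $\int_{\RR^k}$ under $m^*$ and translations, show $c(\lambda v)=c(v)$ and $c(v+e(u))=c(v)$, so $c$ descends to a continuous function on $\PP_+(\RR^n/e(\RR^k))$.

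For the last assertion, suppose $\omega$ is closed. Since $\pi^*d\tau=d\omega|_{\RR^n\setminus\{0\}}=0$ and $\pi^*$ is injective, $d\tau=0$ on $\PP_+(\RR^n)$. If $n-k\geq 2$, the complement $\RR^n\setminus e(\RR^k)$ is connected, so any $v,v'$ can be joined by a smooth path $v_t$ in the complement, and Stokes applied to $\Psi(w,t)=\psi_{v_t}(w)$ on $B_R\times[0,1]$ yields $c([v])=c([v'])$ after sending $R\to\infty$; the lateral boundary integral is controlled by the same decay estimate $O(R^{k-1}\cdot R^{-k-1})\to 0$. If $n-k=1$, i.e.\ $k=n-1$, the complement has two components, and $\psi_v,\psi_{-v}$ parametrize opposite open hemispheres of $S^{n-1}\simeq\PP_+(\RR^n)$ with coinciding asymptotic boundary orientation on $e(S^{k-1})$, so $c(v)-c(-v)=\pm\int_{S^{n-1}}\tau$. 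Testing the closed generalized form $\omega$ against a smooth radial cut-off equal to $1$ on $B_1$ gives $\int_{S^{n-1}}\tau=0$, completing the proof.

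The main obstacle is the degenerate case $n-k=1$, in which no path inside the complement connects $v$ with $-v$ and the path-of-deformations argument fails; here closedness as a generalized form on all of $\RR^n$ (rather than merely on $\RR^n\setminus\{0\}$) is essential, and the cohomological identity $\int_{S^{n-1}}\tau=0$ is the genuine additional input beyond closedness of $\tau$ on $\PP_+(\RR^n)$.
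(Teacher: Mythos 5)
Your proof takes a genuinely different, more conceptual route than the paper's. The paper works in explicit coordinates: it writes $\omega$ via coefficient functions $g_J$, exploits a Taylor factorization forced by $i_E\omega=0$ at points of $W=e(\RR^k)$, and bounds two explicit integrals $I_1,I_2$ by hand. You instead factor through the oriented projectivization, identifying $e^*S_{\epsilon v}^*\omega = m_{1/\epsilon}^*\psi_v^*\tau$ (which follows simply from $\pi\circ m_\lambda=\pi$; the differential identity you cite is not really needed), derive integrability of $\psi_v^*\tau$ from the asymptotic rank deficiency of $d\psi_v$, and invoke the standard $L^1$ approximate-identity fact. This yields the closed form $c([v])=\int_{\RR^k}\psi_v^*\tau$ that the paper's estimates do not make explicit, and your treatment of constancy is sharper on the edge case $n-k=1$: you correctly observe that $d\tau=0$ alone leaves the period $\int_{S^{n-1}}\tau$ undetermined and that it is closedness of $\omega$ as a generalized form on all of $\RR^n$ (not merely on $\RR^n\setminus\{0\}$) that kills it; the paper's own argument (``by the same token'') glosses over this. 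Two remarks. First, the proposition asserts uniformity of the convergence over all of $S^{n-1}\setminus e(\RR^k)$, whereas your dominated-convergence argument as written yields it only on compact subsets. The full claim does follow from your reparametrization identities: writing $v=\lambda u+w$ with $u\in S(W^\perp)$, $\lambda\in(0,1]$, $w\in W$ gives
\[
e^*S_{\epsilon v}^*\omega=T_{\epsilon e^{-1}(w)}^*\,m_{1/(\epsilon\lambda)}^*\psi_u^*\tau,
\]
and since $\epsilon\lambda\le\epsilon$ and $|e^{-1}(w)|$ is bounded, the convergence only improves as $v$ approaches $e(\RR^k)$; this should be spelled out. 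Second, the lateral boundary estimate in your Stokes argument should read $O(R^{k-1}\cdot R^{-k})=O(R^{-1})$, not $O(R^{k-1}\cdot R^{-k-1})$: the $(k-1)$-form $i_{\partial_t}\Psi^*\tau$ involves only the $k-1$ larger singular values of $d\psi_{v_t}$ and so decays one power slower than the top-degree pullback $\psi_v^*\tau$; the conclusion that it tends to zero is unaffected.
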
 
 In the latter situation, we denote the limit by $e^*\omega$.
\proof

Write $V=\R^n$, $W=\R^k$.   Without loss of generality we may assume that $e$ is the inclusion of a subspace $W$ of $V$. We choose an orthonormal basis of $V$ such that $W=\Span(e_1,\dots, e_{k})$.

 By Lemma~\ref{lem:supp0} we can find functions $g_J\in C^\infty(S^{n-1})$ such that
\[ \omega = \frac{1}{|x|^{k}}\sum_{|J|=k} g_J(\frac{x}{|x|}) dx_J.\]
Consider first a unit vector $u\perp W$. Denoting $I=\{1,\dots, k\}$, it holds that 
\[e^*(S_{\epsilon u}^*\omega)|_{y} = \frac{1}{|y+ \epsilon u|^{k}} g_I\left(\frac{y+\epsilon u}{|y+ \epsilon u|}\right)dy_1\wedge\dots\wedge dy_{k}.\]
Since $i_E \omega=0$, it holds that $g_I(y)=0$ for 
$y\in W$. Hence 
\begin{equation}\label{eq:taylor} g_I(y+z)= \sum_{i=k+1}^n z_i h_i(y,z), \quad y\in W, \ z\in W^\perp,\end{equation}
where 
$$ h_i (y,z)= \int_0^1 \frac{\partial g_I}{\partial x_i}(y +tz) dt.$$

Denoting  $W_u=W\oplus \Span(u)$, $g_u=g_I|_{S(W_u)}$, we may write $g_u(\theta)=\langle \theta, u\rangle h_u(\theta)$ for some $h_u\in C^\infty(S(W_u))$, which also depends smoothly on $u$. Observe that the family $\{|h_u|: u\in S(W^\perp)\}$ can be uniformly bounded by some constant $B$.

Now let $\psi\otimes \sigma \in C^\infty_c(W)\otimes \ori(W)$ be supported in a ball of radius $R$. Assume for simplicity that the coordinates $y_1,\dots,y_{k}$ are positively oriented with respect to $\sigma$, and we omit $\sigma$ henceforth.

We may write
\[ \langle e^*(S_{\epsilon u}^*\omega), \psi\rangle = \int_{|y|\leq R }\psi(y)\frac{\epsilon}{|y+\epsilon u|^{k+1}} h_u\left(\frac{y+\epsilon u}{|y+\epsilon u|}\right)dy=I_1+I_2,\]
where 
\begin{align*} I_1&=\epsilon\int_{|y|\leq R }(\psi(y)-\psi(0))\frac{1}{|y+\epsilon u|^{k+1}} h_u\left(\frac{y+\epsilon u}{|y+\epsilon u|}\right)dy,\\
I_2&=\psi(0)\epsilon\int_{|y|\leq R }\frac{1}{|y+\epsilon u|^{k+1}} h_u\left(\frac{y+\epsilon u}{|y+\epsilon u|}\right)dy.
\end{align*}
Observe that $|\psi(y)-\psi(0)|\leq C_0|y|$ for some constant $C_0$. Put $M=C_0B$.
We do a change of variables, $y=\epsilon w$, to find that

\begin{align*} |I_1|&\leq \epsilon M\int _{|w|\leq R/\epsilon} \frac{\epsilon |w|}{\epsilon^{k+1}|w+u|^{k+1}}\epsilon^{k}dw=\epsilon M\int_{|w|\leq R/\epsilon}\frac{|w|}{(1+|w|^2)^{(k+1)/2}}dw\\
&\leq C_1\epsilon+\epsilon M\int_{1\leq |w|\leq R/\epsilon}\frac{|w|}{(1+|w|^2)^{(k+1)/2}}dw\leq C_1\epsilon+C_2\epsilon 
\int_{1}^{R/\epsilon}\frac{r^{k}}{(1+r^2)^{(k+1)/2}}dr\\
&\leq C_1\epsilon+C_2\epsilon  \int_{1}^{R/\epsilon}\frac{1}{r}dr= C_1\epsilon+C_2\epsilon \log \frac{R}{\epsilon}.
\end{align*}
Thus $I_1\to 0$ as $\epsilon\to0^+$, uniformly in $u\in S(W^\perp)$.

Next we do the same change of variable $y=\epsilon w$ for $I_2$. We find that 

\begin{align*} I_2&=\psi(0)\epsilon\int_{|y|\leq R }\frac{1}{|y+\epsilon u|^{k+1}} h_u\left(\frac{y+\epsilon u}{|y+\epsilon u|}\right)dy\\
&=\psi(0)\int_{|w|\leq R/\epsilon} \frac{1}{(1+|w|^2)^{(k+1)/2}}h_u\left(\frac{w+u}{|w+u|}\right)dw.\end{align*}
The last integral converges as $\epsilon \to 0$, uniformly in $u\in S(W^\perp)$, to the absolutely convergent integral
\begin{equation}\label{eq:delta_constant}c(u):=\int_{\R^{k}} \frac{1}{(1+|w|^2)^{(k+1)/2}}h_u\left(\frac{w+u}{|w+u|}\right)dw,\end{equation}
and so $I_2\to  c(u)\psi(0)$. We conclude that $e^*(S_{\epsilon  u}^*\omega)\to c(u)\delta_0$ as $\epsilon\to 0^+$, uniformly in $u\in S(W^\perp)$. As  a trivial consequence we obtain that  
$e^*(S_{\epsilon  \lambda u}^*\omega)\to c(u)\delta_0$ uniformly in  $\{(u,\lambda): u\in S(W^\perp), 0< \lambda\leq 1\}$.

Now for a general $v\in S(V)\setminus W$, there are unique $w\in W$, $0<\lambda\leq 1$ and $u\in S(W^\perp)$ such that $v=\lambda u+w$.  
We have $S_{\epsilon v}=S_{\epsilon \lambda u}\circ S_{\epsilon w}$, so that 
\[\langle e^*(S_{\epsilon v}^*\omega), \psi\rangle =\langle e^*(S_{\epsilon w}^*\circ S_{\epsilon \lambda u}^* \omega), \psi\rangle=\langle e^* ( S_{\epsilon \lambda u}^*\omega), S_{-\epsilon w}^*\psi\rangle.\]
Tracing the proof above with $\psi$ replaced by $S_{-\epsilon w}^*\psi$, we see that $|I_1|\to 0$ uniformly in $v$, while
\[ I_2=\psi(-\epsilon w)\int_{|w|\leq R/(\lambda\epsilon)} \frac{1}{(1+|w|^2)^{(k+1)/2}}h_u\left(\frac{w+u}{|w+u|}\right)dw \]
converges to $c(u)\psi(0)$ uniformly in $v$.  In particular, the limit only depends on $[v]\in \mathbb P_+(V/W)$. The continuity of $c(u)$ on $S(W^\perp)$ is seen from  formula \eqref{eq:delta_constant}.

Finally, assume that $\omega$ is closed. 
By Proposition~\ref{lem:zero_homogeneous} we have $\omega=r^0\tau$ for some $\tau\in\Omega^{k}(S^{n-1})$. Since $0=d\omega = d\pi^* \tau = \pi^* d \tau $ on $V\setminus\{0\}$, where $\pi\colon V\setminus\{0\}\to S^{n-1}$ denotes the radial projection, we conclude that $\tau$ is closed.   Fix $\psi\in C^\infty_c(W)\otimes \ori(W)$.

For every $v\in V\setminus W$ put $W_v= W\oplus \operatorname{span}(v)$. For every $\epsilon >0$, the image of 
$\epsilon v + W$ under $\pi$ is the open hemisphere $S(W_v)$ containing $\pi(v)$, with boundary $S(W)$. Let $S_v^+(W_v)$ denote the corresponding closed hemisphere. Put $\pi_{\epsilon,v}=\pi|_{\epsilon v + W}$. Let $\psi_{\epsilon,v}\in  C^\infty_c(S(W_v), \ori(S(W_v))$ be such that 
$ (\pi_{\epsilon,v})^* \psi_{\epsilon, v} = S^*_{-\epsilon v}\psi$.  
It then holds that 
$$ \langle e^* S^*_{\epsilon v} \omega, \psi \rangle = \int_{\epsilon v+W} \pi^*\tau  \otimes  \pi^*\psi_{\epsilon,v} = \int_{S_v^+(W_v)} \tau \otimes \psi_{\epsilon,v}.
$$

Observe that 
$$ \psi_{\epsilon, v} \to \psi(0) \cdot \mathbf{1}_{S_v^+(W_v)} $$  pointwise as $\epsilon$ goes to zero. Hence by the dominated convergence theorem we obtain 
\begin{equation}\label{eq:const}c(v)= \lim_{\epsilon\to 0}\int_{S(W_v)} \tau \otimes \psi_{\epsilon,v} \to \int_{S_v^+(W_v)} \tau \otimes \sigma_v \otimes  \psi(0),
\end{equation}
 where $\sigma_v(v)=1$. 
 Since $d\tau =0$,  the latter integral remains constant if $v$ follows a continuous path in $S^{n-1}\setminus W$. By the same token,  $\int_{S_v^+(W)} \tau + \int_{S_{-v}^+(W)} \tau =\int_{S(W_v)} \tau$. Since $\sigma_{-v}= -\sigma_v$, it follows that $c(-v)=c(v)$. This concludes the proof.

\endproof

\endproof

\begin{Lemma}\label{lem:pullback_forms}
Assume $\omega\in\Omega^{k}_{-\infty}(\R^n)$ is a closed generalized form, smooth outside of the origin, $0$-homogeneous,  and $i_E\omega=0$. Let $e:\R^j\hookrightarrow \R^n$ be a fixed monomorphism. Let $\mu_\epsilon(x) dx\in\mathcal M^\infty(\R^n)$ be an approximate identity, with $\mu_\epsilon$ a Schwartz function. Then the weak limit \[\lim_{\epsilon\to 0^+}e^* (\omega\ast\mu_\epsilon)\] exists and equals $e^*\omega$.
\end{Lemma}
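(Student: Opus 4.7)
The plan is to express $\omega \ast \mu_\epsilon$ as a continuous superposition of translates of $\omega$, reducing the assertion via dominated convergence to the pointwise restriction results of Lemma~\ref{lem:restrict_not_delta} and Proposition~\ref{prop:restrict_delta}.

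The assertion is trivial when $j < k$, so assume $j \ge k$. Parameterizing the approximate identity as $\mu_\epsilon(y) = \epsilon^{-n}\mu(y/\epsilon)$ and substituting $y=\epsilon v$ in the convolution integral gives the identity of tempered generalized forms
\[
\omega \ast \mu_\epsilon = \int_{\R^n} \mu(v)\, S_{-\epsilon v}^*\omega\, dv,
\]
justified coefficientwise by Fubini, the local integrability of $\omega$ (Lemma~\ref{lem:tau_L1}), and the rapid decay of $\mu$. Pulling back by $e$ and pairing with a test form $\phi\in\Omega^{j-k}_c(\R^j,\ori(\R^j))$ yields
\[
\langle e^*(\omega \ast \mu_\epsilon),\phi\rangle = \int_{\R^n}\mu(v)F_\epsilon(v)\,dv, \qquad F_\epsilon(v):=\langle e^*S_{-\epsilon v}^*\omega,\phi\rangle.
\]
For $v$ in the full-measure set $\R^n\setminus e(\R^j)$, Lemma~\ref{lem:restrict_not_delta} in case $j>k$ or Proposition~\ref{prop:restrict_delta} in case $j=k$ gives $\lim_{\epsilon\to 0^+}F_\epsilon(v)=\langle e^*\omega,\phi\rangle$; the closedness of $\omega$ is essential when $j=k$, as it is precisely what makes the constant $c([v])$ in Proposition~\ref{prop:restrict_delta} $v$-independent.

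To swap limit and integral, the plan is to find an $\epsilon$-independent $L^1(|\mu|\,dv)$-dominator of $|F_\epsilon|$. Fixing a Euclidean structure, decompose $\epsilon v = \epsilon v_\parallel + \epsilon v_\perp$ along $e(\R^j)$ and its orthogonal complement; the change of variable $q = p - \epsilon e^{-1}(v_\parallel)$ followed by polar integration $r = \epsilon|v_\perp|s$ reduces the crude estimate
\[
|F_\epsilon(v)|\le C(\omega,\phi)\int_{|p|\le R_0}|e(p)-\epsilon v|^{-k}\,dp
\]
to an $s$-integral bounded uniformly in $\epsilon$ when $j>k$ (yielding $|F_\epsilon(v)|\le C(1+|v_\parallel|)^{j-k}$, polynomially bounded and Schwartz-integrable), but only logarithmically divergent when $j=k$. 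To eliminate the divergence, write $\omega=|x|^{-k}\sum g_I(x/|x|)\,dx_I$; the hypothesis $i_E\omega=0$ forces the single relevant coefficient $g_I$ (indexed by $I$ matching $e(\R^k)$) to vanish on $S^{n-1}\cap e(\R^k)$, hence linearly by smoothness, producing an extra factor $\epsilon|v_\perp|/|e(p)-\epsilon v|$ in the integrand. The resulting estimate $|e^*S_{-\epsilon v}^*\omega(p)|\le C\epsilon|v_\perp|/|e(p)-\epsilon v|^{k+1}$, after the same polar substitution, converts the previous logarithmic integral into the convergent $\int_0^\infty s^{k-1}/(s^2+1)^{(k+1)/2}\,ds$, giving the uniform bound $|F_\epsilon(v)|\le C$.

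The dominated convergence theorem together with $\int\mu=1$ then gives $\langle e^*(\omega\ast\mu_\epsilon),\phi\rangle\to\langle e^*\omega,\phi\rangle$, completing the proof. The hard part is precisely the uniform bound in the case $j=k$: there the logarithmic singularity of the naive estimate must be absorbed by using the linear vanishing that the hypothesis $i_E\omega=0$ provides for the top-degree coefficient of $\omega$ along the embedded subspace.
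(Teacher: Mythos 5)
Your proposal is correct and follows essentially the same route as the paper: both rely on the pointwise convergence from Lemma~\ref{lem:restrict_not_delta} (for $j>k$) and Proposition~\ref{prop:restrict_delta} (for $j=k$), and both derive the crucial uniform bound on $\langle e^*S^*_{\cdot}\omega, \psi\rangle$ in the borderline case $j=k$ from the linear vanishing of the relevant coefficient forced by $i_E\omega=0$ (which is exactly the $I_1,I_2$ estimate inside the proof of Proposition~\ref{prop:restrict_delta}). The only cosmetic difference is that you rescale $v\mapsto\epsilon v$ to put the integral in a fixed form and invoke dominated convergence, whereas the paper keeps $\mu_\epsilon$ and uses the concentration of mass via a $\delta(\epsilon)$-splitting; these are equivalent bookkeeping devices.
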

\proof
For $k<j$, this follows from Lemma \ref{lem:restrict_not_delta}, while for $k=j$ we use Proposition \ref{prop:restrict_delta}.
Let us provide the details in the case $k=j$, which is more involved. 

By Proposition  \ref{prop:restrict_delta}, the limit $e^*\omega=\lim_{\R^n\setminus \R^j\ni v\to 0} e^*S^*_v\omega$ exists.  
Let $\psi\in \Omega_c(\R^k)$ be a test form. It then holds that
\[ f(v):=\langle e^*S_v^*\omega, \psi\rangle-\langle e^*\omega, \psi\rangle:\R^n\setminus \R^j\to \R \]
satisfies $f(v)\to 0$ as $|v|\to 0$. 

Furthermore, $f$ is bounded. Indeed, write $v=\lambda u+w$, where $w\in W$ and $u\in S(W^\perp)$. Write
\[\langle e^*S_v^*\omega, \psi\rangle=\langle e^*S_{\lambda u}^*\omega, S^*_{-w}\psi\rangle=I_1+I_2  \] as in the proof of Proposition \ref{prop:restrict_delta}. As $|S^*_{-w}\psi(y)-S^*_{-w}\psi(0)|\leq C_0|y|$ for some $C_0$ that is independent of $w$, while $|\psi(-w)|$ is uniformly bounded, it follows that $I_1, I_2$ and therefore $\langle e^*S_v^*\omega, \psi\rangle$ is bounded as $\lambda \to 0$, uniformly in $w$ and $u$. The same estimates for $I_1, I_2$ in the proof of Proposition \ref{prop:restrict_delta} show that $I_1, I_2$ remain bounded uniformly in $u, w$ also for $|\lambda|\geq \lambda_0>0$.

It follows that $\langle e^*S_v^*\omega, \psi\rangle$ is bounded as a function of $v\in \R^n\setminus \R^j$, and therefore so is $f(v)$.

Choose a function $\delta(\epsilon)>0$  such that $\delta(\epsilon)\to 0$ as $\epsilon\to 0$, and $\int_{|v|\geq \delta(\epsilon)}\mu_\epsilon(v)\leq \delta(\epsilon)$. It then holds that

\begin{align*}
	\langle e^*(\omega\ast\mu_\epsilon), \psi\rangle -\langle e^* \omega, \psi\rangle =\int \langle e^*S_v^* \omega-e^*\omega, \psi\rangle d\mu_\epsilon(v)=
	\int_{\R^n} f(v)d\mu_\epsilon(v).
	\end{align*}

Seeing that $\left|\int_{|v|\geq \delta(\epsilon)} f(v)d\mu_\epsilon(v)\right|\leq \sup_{\R^n} |f(v)| \delta(\epsilon)$, while 
\[\int_{|v|\leq \delta(\epsilon)} f(v)d\mu_\epsilon(v)\leq \sup_{|v|\leq \delta(\epsilon)}|f(v)| \to 0, \]
it follows that 
\[	\langle e^*(\omega\ast\mu_\epsilon), \psi\rangle \to\langle e^* \omega, \psi\rangle,\]
as claimed.
\endproof

\subsubsection{Pushforward under linear epimorphisms}

 In general, generalized forms do not pushforward under  linear epimorphisms. 
The following definition is motivated by the description of the pullback of valuations in terms of differential forms given in \cite{alesker_radon}. 

\begin{Definition}\label{def:pushforward}
Let $\omega\in\Omega^{ k}_{-\infty}(V)$ be smooth outside of the origin, $0$-homogeneous, and satisfy $i_E\omega=0$, and let  $p:V\to W$ be  a linear epimorphism.  We define the \emph{pushforward} of $\omega$ as
$$p_*\omega:=r^0(\tilde p_*\tilde \tau) \in \Omega^{k-s}_{-\infty}(W,\ori(\Ker p)),$$
 where  $\tilde p:\widetilde {\mathbb P_+(V)}\to \mathbb P_+( W)$ is the induced projection on the oriented blow-up of $\mathbb P_+(V)$ along $\mathbb P_+(\Ker p)$,   $\tau$ satisfies $\omega= r^0\tau$, and  $\tilde \tau=\pi^*\tau$ where $\pi:\widetilde {\mathbb P_+( V)}\to \mathbb P_+(V)$ is the natural projection.

 Furthermore, for $\omega\in \Omega^{n}_{-\infty}(V)$ equal to $c\delta_0\otimes \sigma_V$, we define $p_*\omega=c\delta_0\otimes \sigma_W\otimes \sigma_p$, where $\sigma_V\in\ori(V), \sigma_W\in\ori(W), \sigma_p\in\ori(\Ker p)$ are such that $\sigma_V=\sigma_W\otimes\sigma_p$. \end{Definition}

We will need the following description of the pushforward.

\begin{Lemma}\label{lem:pushforward_forms}
Let $p: V\to  W$ be a linear epimorphism,  $\dim V=n$, $\dim W=n-s$. Assume that $\omega\in\Omega_{-\infty}^{k}(V)$ is $0$-homogeneous, smooth outside of the origin, and $i_E\omega=0$. Let $\nu_\epsilon \in C^\infty( V )$ satisfy $\nu_\epsilon \to 1$ smoothly on compact sets, with $\nu_\epsilon$ a uniformly bounded family of Schwartz functions. Then $p_*\omega \in\Omega_{-\infty}^{k-s}(W , \ori(\Ker p))$ is given by 
\[\langle \psi, p_*\omega\rangle =\lim_{\epsilon\to 0} \langle \nu_\epsilon\cdot p^*\psi, \omega \rangle ,\] for all compactly supported $\psi\in \Omega^{ n-k}(W  , \ori(V))$.
\end{Lemma}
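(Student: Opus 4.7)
The plan is to pass from the ambient Lebesgue integral on $V$ to fiber integration on the oriented blow-up $\widetilde{\PP_+(V)}$, and then send $\epsilon\to 0$ by dominated convergence.

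First, by Proposition \ref{lem:zero_homogeneous} I write $\omega=r^0\tau$ for a smooth $\tau\in\Omega^k(\PP_+(V))$, so that $\tilde\tau=\pi^*\tau$ is a smooth form on the compact manifold $\widetilde{\PP_+(V)}$. By Lemma \ref{lem:tau_L1}, $\omega$ is locally integrable on $V$; combined with the Schwartz decay of $\nu_\epsilon$ and the compact support of $\psi$, the distributional pairing reduces to the absolutely convergent Lebesgue integral
$$\langle \nu_\epsilon\cdot p^*\psi,\omega\rangle = \int_V \nu_\epsilon\cdot p^*\psi\wedge\omega.$$

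Next I would change variables to the blow-up. In polar coordinates $V\setminus\{0\}\cong\RR_{>0}\times S(V)$ the form $\omega$ is purely angular (since $i_E\omega=0$) and equals the radial pullback of $\tau$. Off the set $\RR_{>0}\times S(K)$, with $K=\ker p$, the map $x\mapsto p(x)/|p(x)|$ is the projectivization of $p$, and this extends smoothly across the exceptional divisor to the blow-up map $\tilde p:\widetilde{\PP_+(V)}\to \PP_+(W)$. Introducing polar coordinates on $W$ as well to separate the radial and angular directions, a Fubini computation rewrites the integral as
$$\int_V \nu_\epsilon\cdot p^*\psi\wedge\omega = \int_{\widetilde{\PP_+(V)}} \tilde p^*\eta_\epsilon \wedge \tilde\tau,$$
where $\eta_\epsilon\in\Omega(\PP_+(W))$ is built from $\psi$ via the radial fiber integration $\pi_{W*}$, incorporating the cutoff $\nu_\epsilon$, and satisfies $\eta_\epsilon\to \pi_{W*}\psi$ smoothly on $\PP_+(W)$ as $\epsilon\to 0$.

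Since $\widetilde{\PP_+(V)}$ is compact, $\tilde\tau$ is smooth, and the $\eta_\epsilon$ are uniformly bounded, dominated convergence gives
$$\lim_{\epsilon\to 0}\int_V \nu_\epsilon\cdot p^*\psi\wedge\omega = \int_{\widetilde{\PP_+(V)}} \tilde p^*(\pi_{W*}\psi)\wedge\tilde\tau = \int_{\PP_+(W)} \pi_{W*}\psi\wedge \tilde p_*\tilde\tau,$$
where the last step is the projection formula for fiber integration under the proper smooth submersion $\tilde p$. By the definition of $r^0$, the right-hand side is precisely $\langle r^0(\tilde p_*\tilde\tau),\psi\rangle = \langle p_*\omega,\psi\rangle$, as required.

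The main difficulty is the change-of-variables step near the exceptional set $S(K)\subset S(V)$: the radial coordinate on $W$ becomes singular where $p(u)\to 0$, and this is exactly where the oriented blow-up $\widetilde{\PP_+(V)}$ is designed to rescue matters. That $\tau$ extends smoothly across $\PP_+(K)$ is the crucial fact making $\tilde\tau$ globally smooth on the blow-up, supplying the integrable dominating bound needed to pass to the limit. The remaining bookkeeping—identifying the radial $W$-integration of $\nu_\epsilon\cdot p^*\psi$ with a family $\eta_\epsilon$ converging to $\pi_{W*}\psi$—is a local coordinate computation enabled by the $0$-homogeneity of $\omega$ and the smoothness of $\nu_\epsilon$ on compacts.
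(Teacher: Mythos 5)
Your overall strategy---Fubini to the oriented blow-up, then dominated convergence---is sound and is genuinely different from the paper's route (the paper establishes absolute convergence of the Euclidean integral $\int_V p^*\psi\wedge\omega$ by a direct coordinate estimate, including a delicate case analysis for $k=s$ using $i_E\omega=0$ to extract extra decay of the relevant coefficient $g_I$ near $S(\Ker p)$, and then sidesteps the blow-up entirely by first proving the identity when $\tau$ vanishes near $\PP_+(\Ker p)$ and passing to the general case by approximation). Your approach goes straight to $\widetilde{\PP_+(V)}$, which is attractive; but as written it has a gap in the bookkeeping and the domination step.

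The claimed Fubini identity $\int_V\nu_\epsilon\cdot p^*\psi\wedge\omega = \int_{\widetilde{\PP_+(V)}}\tilde p^*\eta_\epsilon\wedge\tilde\tau$ with $\eta_\epsilon\in\Omega(\PP_+(W))$ cannot hold for $\epsilon>0$. The radial fiber integral of $\nu_\epsilon\cdot p^*\psi$ along $V\setminus\{0\}\to\PP_+(V)$ is a form $\alpha_\epsilon$ on $\PP_+(V)$ (extended to the blow-up), and since $\nu_\epsilon$ is a genuine function on $V$, $\alpha_\epsilon$ depends on the full direction $u\in S(V)$ rather than only on $p(u)/|p(u)|$; it is not pulled back from $\PP_+(W)$. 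What is true is that $\int_V\nu_\epsilon p^*\psi\wedge\omega = \int_{\widetilde{\PP_+(V)}}\alpha_\epsilon\wedge\tilde\tau$, and only in the limit $\epsilon\to 0$ does $\alpha_\epsilon\to\tilde p^*(\pi_{W*}\psi)$ become a pullback. Relatedly, the assertion that the $\eta_\epsilon$ ``converge smoothly on $\PP_+(W)$'' and ``are uniformly bounded'' is not self-evident and, in the $\PP_+(V)$ metric, $\alpha_\epsilon$ blows up like $|p(u)|^{-(n-k-1)}$ near $\PP_+(\Ker p)$. The domination you need must come from a more careful claim: in blow-up coordinates near the exceptional divisor, the angular part $\beta(u)=\iota_{E'}\bigwedge_{i}dp_i$ of the radial contraction vanishes to order $\rho^{n-k}$ (where $\rho=|p(u)|$), precisely cancelling the $\rho^{-(n-k)}$ growth of the radial integral, so that $\alpha_\epsilon$ is in fact uniformly bounded \emph{as a form on} $\widetilde{\PP_+(V)}$. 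This is the nontrivial estimate on which your dominated convergence rests, and it is not supplied by ``compactness of the blow-up plus smoothness of $\tilde\tau$'' alone; it is also where $i_E\omega=0$ (equivalently, that $\omega=\pi^*\tau$ with $\tau$ smooth on $\PP_+(V)$) enters essentially. Finally, you omit the trivial degenerate case $k=n$, where $\omega=c\delta_0$.

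If you correct the bookkeeping---take $\eta_\epsilon:=\alpha_\epsilon$ to be the radial fiber integral on $V$ viewed on $\widetilde{\PP_+(V)}$, prove the $\rho^{n-k}$-cancellation giving uniform boundedness there, verify pointwise convergence to $\tilde p^*(\pi_{W*}\psi)$ on the open dense set, and then invoke the projection formula \emph{at the limit}---the argument goes through and gives a clean alternative to the paper's Euclidean-coordinate computation followed by approximation. As it stands, however, the key dominating estimate is asserted without justification, and the object $\eta_\epsilon$ does not exist in the space you place it.
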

\proof
For $k=n$ we have $\omega=c\delta_0$, and both sides trivially equal $c\psi(0)$. 

Assume now $k<n$ and  that $V=\RR^n$, $W=\RR^{n-s}$ and  coordinates are chosen so that $\Ker p = \{x_1=\dots=x_{ n-s}=0\}$. Write \[\omega=\frac{1}{|x|^k}\sum_{|I|=k} g_I(\frac{x}{|x|})dx_I=:\sum \omega_I	.\]
We first show that \[ \int_{\R^n} p^*\psi\wedge\omega\]
is absolutely convergent, and so by the dominated convergence theorem, the limit on the right hand side of the claimed equality exists and equals this integral. 

When $k>s$, this follows at once since $\frac{1}{|x|^k}$ is integrable over $\R^s$.

Now assume $k=s$. Denoting $I=\{n-k+1,\dots, n\}$, we may assume $\omega=\omega_I$. Examining the coefficient of $dx_{n-k+2}\wedge\dots\wedge dx_n$ in the equality $i_E\omega=0$, and writing $I_j=\{j, n-k+2,\dots, n\}$ for $1\leq j\leq n-k$, we find that
\[ g_I(\frac{x}{|x|})x_{n-k+1}=-\sum_{j=1}^{n-k}g_{I_j}(\frac{x}{|x|})x_j.\]
Denoting $x=y+z$ where $y\in\R^{n-k}$ and $z\in\R^k$, we write this equality as
\[  g_I(\frac{x}{|x|})=\frac{f_{1,y}(z)}{x_{n-k+1}}=\frac{f_{1,y}(z)}{z_{1}} \]
for some $f_{1,y}(z)\in C^\infty(\R^k\setminus 0)$, which is bounded for each fixed $y$, and smooth in $\R^k$ when $y\neq 0$. We similarly can find $f_{i,y}(z)\in C^\infty(\R^k\setminus 0)$ for $1\leq i\leq k$ such that 
\[ g_I(\frac{x}{|x|})=\frac{f_{i,y}(z)}{z_{i}}.\]
Note also that $|f_{i, y}(z)|\leq c_0|y|$ for some fixed $c_0>0$.

We have \begin{align*} p^*\psi \wedge\omega& =\frac{1}{|x|^{k}} g_I(\frac{x}{|x|}) p^*\psi\wedge dx_{n-k+1}\wedge\dots\wedge dx_n 
\\&=h(y,z) dy_1\wedge\dots\wedge dy_{n-k}dz_1\wedge\dots\wedge dz_k\end{align*}
for some $h(y,z)$ smooth outside $(0,0)$ and supported in $|y|<R$ for some $R>0$.

Thus for all $(y,z)\in\R^{n-k}\oplus \R^k$ with $y,z\neq 0$ it holds that

\[ |h(y,z)|\leq  \frac{c_0|y|}{|y+z|^k \max(|z_1|,\dots, |z_k|)}\leq \frac{c_1|y|}{|y+z|^k|z|}. \]

Consequently,
\[ \int_{\R^k}|h(y,z)|dz\leq c_1|y|\int_{\R^k}\frac{dz}{|z||y+z|^k}=c_2|y|\int_0^\infty\frac{r^{k-2}dr}{(r^2+|y|^2)^{k/2}}=c_3\]
and so $\int_{\R^n}|h(y,z)|dydz\leq \int_{|y|\leq R} c_3 dy$ is finite.

Thus in all cases it remains to verify that
\[\langle \psi, p_*\omega\rangle = \langle p^*\psi, \omega\rangle. \]

Let $\omega=r^0\tau$. 
To avoid dealing with the blow-up, we may assume that $\tau$ vanishes near $\Ker p$, and conclude using approximation. Denoting by $\pi:V\setminus\{0\}\to \mathbb P_+(V)$ the radial projection, the left hand side is by definition
\[\langle \psi, p_*\omega\rangle=\langle \psi,  r^0p_*\tau\rangle=\langle \pi_*\psi,  p_*\tau\rangle=\langle p^*\pi_*\psi, \tau \rangle.\]
On the other hand, since $\tau$ vanishes near $\Ker p$, \[\langle p^*\psi,\omega\rangle=\langle \pi_*p^*\psi, \tau\rangle.\]
It remains to observe that  
 the top arrow in 
	\begin{center}
	\begin{tikzpicture}
		\matrix (m) [matrix of math nodes,row sep=3em,column sep=4em,minimum width=2em]
		{
			V\setminus \Ker p  & \PP_+(V\setminus \Ker p) \\
			W & \PP_+(W) \\};
		\path[-stealth]
		(m-1-1) edge node [left] {$p$}    (m-2-1)
		edge  node [above] {$\pi$} (m-1-2)
		(m-1-2) edge node [right] {$p$} (m-2-2)
		(m-2-1) edge node [above] {$\pi$} (m-2-2);
	\end{tikzpicture}
	\end{center}
is a bundle map that restricts to a diffeomorphism on each fiber, and so $\pi_*p^*\psi=p^*\pi_*\psi$. 
\endproof

\section{The $0$-homogeneous current of a valuation}

A key property of smooth valuations is their description through a pair of currents  $(C,T)$, obtained in \cite{AleskerBernig:Product}, equations (60)--(62). Here we unify them into a single current in the translation-invariant case. This simplifies the description of some operations on valuations such as pullback and pushforward, and is very natural for describing the  Alesker--Fourier transform on valuations. 

 In the following we make frequent use of the Hodge star isomorphism 
\begin{equation}\label{eq:ident}
	\ast=\ast_V: \largewedge^k V^* \xrightarrow{\sim} \largewedge^{n-k} V \otimes \largewedge^n V^* \simeq  \largewedge^{n-k} V \otimes \Dens(V) \otimes \ori(V),\end{equation}
given by $\langle \eta, \zeta\rangle=\eta\wedge \ast\zeta,\quad \zeta\in\wedge^k V^*,\eta\in\wedge^k V$.
In view of later computations,  we remark that the Hodge star isomorphism satisfies 
	$$ \zeta \wedge \theta= \langle \ast \zeta, \theta\rangle ,\quad \zeta\in \largewedge^k V^*, \theta\in \largewedge^{n-k} V^*$$
	and 
	$$ (\ast_{V^*} \otimes \id) \circ \ast_V =(-1)^{k(n-k)}\id.$$

\subsection{The $0$-homogeneous current of a valuation}\label{sec:valuation_current}
 Recall that by definition a generalized valuation $\phi\in \Val_k^{-\infty}(V)$ is a continuous linear functional on $\Val_{n-k}^\infty(V)\otimes \dens(V)^*$.  Hence $\phi$ defines for $k>0$ a linear functional on $\Omega^{k-1}(\PP_+(V^*), \largewedge^{n-k} V^* \otimes \ori(V) \otimes \dens(V)^*)$, that is a generalized form in $S\in \Omega_{-\infty}^{n-k}(\PP_+(V^*), \largewedge^{n-k} V \otimes \dens(V))$. The Hodge star isomorphism \eqref{eq:ident} allows to consider it as a  generalized form $$T=T(\phi)=(-1)^{k(n-k)}  *_{V^*}\circ S\in  \Omega_{-\infty}^{n-k}(\PP_+(V^*), \largewedge^{k} V^* \otimes \ori(V)).$$ This is the current $T$ of a valuation mentioned above, specialized to the translation-invariant case.
 
 	\begin{Remark}
 		 Note that our convention differs slightly from that of the standard one, see e.g.\ \cite{AleskerBernig:Product}. Namely, let $\phi\in\Val_k^\infty(V)$, $V=\R^n$, be given by $\phi(K)=\int_{\nc(K)}\omega$ for $\omega\in\Omega^{k,n-1-k}(V\times \mathbb P_+(V^*), \ori(V))$. Then \[T(\phi)\in \Omega^{n-k}(\PP_+(V^*), \largewedge^{k} V^* \otimes \ori(V))=\Omega^{k,n-k}(V\times \mathbb P_+(V^*), \ori(V))\] is given by
 		 \begin{equation}\label{eq:extra_sign}T(\phi)=(-1)^{n-k}a^*D\omega,\end{equation}
 		 where $a$ is the antipodal map, and $D$ the Rumin differential.
 		 The extra sign $(-1)^{n-k}=(-1)^{(n-k)^2}$ is due to the sign difference between the natural pairing
  		 \begin{align*} \Omega^{n-k}(\PP_+(V^*), \largewedge^{k} V^* \otimes \ori(V))&\otimes \Omega^{k-1}(\PP_+(V^*), \largewedge^{n-k} V^* \otimes \ori(V)\otimes\Dens(V)^*)\to \CC,\\ 
  		 (\alpha\otimes dx_I)&\otimes (\beta\otimes dx_J)\mapsto \left(\int_{\mathbb P_+(V^*)} \alpha\wedge \beta\right)\otimes (dx_I\wedge dx_J),\end{align*}
  			 and the single wedge product pairing
 		 \[ \Omega^{k,n-k}(V\times \PP_+(V^*), \ori(V))\otimes \Omega^{n-k,k-1}(V\times\PP_+(V^*), \ori(V)\otimes\Dens(V)^*)\to \CC.\]
 	\end{Remark}

For a generalized translation-invariant valuation $\phi\in\Val^{-\infty}_k(V)$, we define its \emph{$0$-homogeneous current} $\tau(\phi)\in\Omega_{-\infty}^{n-k}(V^*, \wedge^k V^*\otimes \ori(V))$ as follows. If $k>0$, then $\tau(\phi)=r^0T$.  If $k=0$, then $\phi=c\chi$ and we set $\tau(\phi)=c\delta_0$, where $\delta_0$ is the delta measure at the origin. 

\begin{Proposition}\label{prop:forms_121_valuations}
	A generalized form $\tau\in\Omega_{-\infty}^{n-k}(V^*, \wedge^k V^*\otimes \ori(V))$ is $\tau(\phi)$ for some $\phi\in\Val^{-\infty}_k(V^*)$ if and only if $\tau$ satisfies
	\begin{enumerate}\label{valuation_type}
		\item $0$-homogeneous.
		\item $i_E\tau=0$.
		\item $\tau\wedge E=0$.
		\item $d\tau=0$.
	\end{enumerate}
	Moreover, $\phi$ is smooth if and only if $\tau(\phi)$ is smooth on $V^*\setminus\{0\}$.
\end{Proposition}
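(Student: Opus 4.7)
The plan is to prove the biconditional in the natural order, and then address the smoothness refinement separately.

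For the only-if direction, I would assume $\tau = \tau(\phi)$ and verify each of (1)--(4) in turn. When $k = 0$, $\tau = \phi(\{0\})\delta_0$ is a top-degree delta at the origin (where $E$ vanishes), so the four conditions hold trivially. For $k > 0$, writing $\tau = r^0 T$ with $T = (-1)^{n-k} a^* D\omega$ on $\PP_+(V^*)$, properties (1) and (2) are immediate from Proposition \ref{lem:zero_homogeneous}. Condition (4) reduces to $dT = 0$, which holds because Rumin's construction exhibits $D\omega$ as exact; the extension to $d\tau = 0$ across the origin is secured by Lemma \ref{lem:supp0}, which rules out an origin-supported delta of the relevant homogeneity and degree. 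Condition (3), that the $\wedge^k V^*$-coefficient of $\tau_\xi$ lies in $\xi \wedge \wedge^{k-1} V^*$, is the translation-invariant incarnation of the $\alpha$-divisibility of $D\omega$ with respect to the contact form $\alpha = \langle \xi, dx\rangle$ on $V \times \PP_+(V^*)$, a property built into the defining primitivity of the Rumin differential in this bidegree.

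For the converse, given $\tau$ satisfying (1)--(4), I would invoke Proposition \ref{lem:zero_homogeneous}: for $k = 0$, (1) and (2) force $\tau$ to be a multiple of $\delta_0$, which yields $\phi$ a multiple of the Euler characteristic. For $k > 0$, (1) and (2) produce a unique $T \in \Omega^{n-k}_{-\infty}(\PP_+(V^*), \wedge^k V^* \otimes \ori(V))$ with $\tau = r^0 T$; condition (4) descends to $dT = 0$, and condition (3) translates to the $\xi$-divisibility of the $\wedge^k V^*$-coefficient of $T$. Applying Hodge duality turns $T$ into $S \in \Omega^{n-k}_{-\infty}(\PP_+(V^*), \wedge^{n-k} V \otimes \dens(V))$, and pairing against $\Omega^{k-1}(\PP_+(V^*), \wedge^{n-k} V^* \otimes \ori(V) \otimes \dens(V)^{-1})$ yields a continuous linear functional. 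To realise this as $\phi \in \Val_k^{-\infty}(V)$, the functional must descend through the surjection from test forms onto $\Val_{n-k}^\infty(V) \otimes \dens(V)^{-1}$. The kernel of this surjection is characterized by Bernig--Br\"ocker \cite{BernigBroecker:Rumin} via the Rumin differential: forms $\eta$ with $D\eta = 0$ (plus a top-degree boundary condition). Condition (4) supplies the integration-by-parts identity annihilating the $d$-exact part of $\eta$, while condition (3) supplies vanishing on $\alpha$-multiples; combined, these are exactly what is required to kill the Bernig--Br\"ocker kernel, producing $\phi$ with $\tau(\phi) = \tau$.

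The smoothness statement is then bookkeeping: smoothness of $\phi$ is equivalent to the existence of a smooth representing $\omega$, hence to smoothness of $D\omega$, of $T$, and, since $r^0$ preserves smoothness on the complement of the origin, of $\tau|_{V^*\setminus\{0\}}$; conversely, smoothness of $\tau$ off the origin restricts to a smooth $T$ on $\PP_+(V^*)$, from which a smooth $\omega$ can be recovered by Rumin's primitive-form decomposition.

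The main obstacle I anticipate is the penultimate step above: cleanly matching conditions (3) and (4) to the kernel of the normal-cycle-integration map. This requires an explicit dictionary between the $\wedge^k V^*$-valued form picture on $\PP_+(V^*)$ used here and the bidegree picture on the contact manifold used by Bernig--Br\"ocker and Alesker--Bernig \cite{AleskerBernig:Product}, together with an appeal to the right form of Rumin's characterization theorem; everything else amounts to direct verification using the structural results already established in this section.
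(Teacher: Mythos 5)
Your proof mirrors the paper's up to a point, but there is a genuine gap in the forward direction, specifically in establishing $d\tau = 0$. You correctly observe that $dT = 0$ on $\PP_+(V^*)$ (the Rumin differential is $d$-exact, hence $d$-closed), so that $d\tau = d(r^0 T)$ is supported at the origin, and then you invoke Lemma~\ref{lem:supp0} to conclude it vanishes. But that lemma is stated only for generalized forms of degree $< n$, whereas $d\tau$ has degree $n-k+1$, which equals $n$ precisely when $k = 1$. In that case Lemma~\ref{lem:supp0} gives nothing — indeed the final clause of Proposition~\ref{lem:zero_homogeneous} shows that nonzero $0$-homogeneous $n$-forms supported at the origin do exist (multiples of $\delta_0$). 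The paper closes the $k=1$ case with a separate observation: any such $d\tau$ would be $c\,\delta_0$, yet $\langle d\tau, 1\rangle = \pm\langle \tau, d1\rangle = 0$ while $\langle \delta_0, 1\rangle \neq 0$, so $c = 0$. Without this or an equivalent argument your proof of (4) is incomplete.

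For the converse direction your strategy diverges from the paper's. Both proofs first use (1)--(4) to extract the unique vertical closed current $T$ on $\PP_+(V^*)$ with $\tau = r^0 T$; at that point the paper simply cites the characterization in \cite{AleskerBernig:Product}, eqs.\ (60)--(62), which says exactly that such a $T$ is the defining current of a (unique, and smooth iff $T$ is smooth) generalized translation-invariant valuation. You instead attempt to verify directly that the pairing with $T$ descends through the normal-cycle integration map by matching the Bernig--Br\"ocker kernel description to conditions (3) and (4). That is conceptually the content of the cited theorem, and you are right that building the dictionary between the $\wedge^k V^*$-valued picture on $\PP_+(V^*)$ and the contact bidegree picture is the delicate step; as written it remains a gap. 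Citing the existing result, as the paper does, is both shorter and complete. Your treatment of (3) in the forward direction — $\alpha$-divisibility of $D\omega$ — is the same fact the paper uses, there phrased as verticality of $T$ together with $\langle T, \pi_*(\psi\wedge E)\rangle = 0$, so that part is fine in substance even though the paper's phrasing makes the mechanism (a pairing of two $\xi$-divisible forms vanishes) slightly more transparent.
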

\begin{proof}
	
	Assume $\tau=\tau(\phi)$. For $k=0$ the verification is trivial, thus we assume $k\geq 1$. Write $\tau=r^0T$ as in the definition of $\tau(\phi)$. By Lemma \ref{lem:zero_homogeneous}, $\tau$ satisfies the first two properties. 
	
	It holds on $V^*\setminus\{0\}$ that $d\pi^*T=\pi^*dT=0$. Therefore, the generalized form $d\tau$ is supported at the origin. For a fixed functional $\psi\in (\wedge^k V^*\otimes \ori(V))^*$, $\phi(d\tau)$ a $0$-homogeneous generalized $(n-k+1)$-form, and so it must hold that $n-k+1=n$, and $\phi(d\tau)$ is a multiple of the delta top form at the origin, denoted $\delta_0$. But $\delta_0$ does not vanish on constant functions, while $d \tau$ must vanish on constants. It follows that $d\tau=0$.
	
	Finally, for a test form $\psi\in \Omega^k(V^*, \wedge^{n-k-1}V^*)$ it holds that
	\[ \langle \tau, \psi\wedge E\rangle  =\langle T, \pi_*(\psi\wedge E)\rangle =0\]
	by the verticality of $T$. Therefore, $\tau\wedge E=0$.
	
	Conversely, let $\tau$ satisfy all four properties. Assume first $k=0$. Since $\tau\wedge E=0$, $\tau$ must be supported at the origin. Since it is $0$-homogeneous, $\tau$ is a multiple of the delta measure, and so $\tau=\tau(c\chi)$ for some constant $c$. 
	
	Assume now $k\geq 1$. By Lemma \ref{lem:zero_homogeneous}, $\tau=r^0\omega$ for a unique generalized form $\omega$ on $\mathbb P_+(V^*)$. It follows immediately from $\tau\wedge E=0$ that $\omega$ is vertical. Also on $V^*\setminus\{0\}$ we have $\pi^*d\omega=d\pi^*\omega=d\tau=0$, and therefore $d\omega=0$. The existence of $\phi$ such that $\tau=\tau(\phi)$ now follows from \cite{AleskerBernig:Product}.
	
\end{proof}

\begin{Definition}
	A form satisfying the properties listed in Proposition \ref{prop:forms_121_valuations} will be called  \emph{valuation-type}. If it is smooth outside the origin, we shall call it  \emph{smooth valuation-type}.
\end{Definition}

 The following special case  will be relevant for us later.

\begin{Lemma} \label{lemma:tauProj}
Let $U$ be a $k$-dimensional linear subspace of $V=\R^n$ and let $P\colon V\to V/U$ denote the canonical projection. Let $\vol_{V/U}$ be a density on $V/U$. Then $\vol_{V/U}$ defines a continuous valuation \[\phi(K)= \vol_{V/U}(PK), \quad K\subset V.\]
 It then holds that 
$$ \tau (\phi)=  P^*\vol_{V/U}  \otimes [[U^\perp]] .$$
\end{Lemma}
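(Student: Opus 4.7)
My plan is to apply Proposition~\ref{prop:forms_121_valuations}: I will first show that $\tilde\tau := P^{*}\vol_{V/U} \otimes [[U^\perp]]$ is valuation-type, so that it equals $\tau(\phi')$ for a unique $\phi' \in \Val^{-\infty}_{n-k}(V)$, and then identify $\phi'$ with $\phi$. The four characterizing conditions for $\tilde\tau$ should all fall out quickly. $0$-homogeneity holds because $[[U^\perp]]$ is $m_\lambda$-invariant for $\lambda>0$ (as $U^\perp$ is a linear subspace, the pullback scaling of the top $(n-k)$-form on $U^\perp$ is absorbed by the change-of-variables Jacobian in the integration), and $P^{*}\vol_{V/U}$ is a constant coefficient. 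The relation $i_E \tilde\tau = 0$ follows from $E(\xi)=\xi$ being tangent to $U^\perp$ along $U^\perp$, so that contracting the integration current with $E$ vanishes. The verticality $\tilde\tau\wedge E=0$ reflects the algebraic observation that $P^{*}\vol_{V/U}\in\wedge^{n-k}U^\perp$ is a top-degree form on $U^\perp$, hence wedging with any $\xi\in U^\perp$ produces zero in $\wedge^{n-k+1}U^\perp=0$. Finally, $d\tilde\tau=0$ since $[[U^\perp]]$ is closed ($U^\perp$ has empty boundary) and the coefficient is constant. Proposition~\ref{prop:forms_121_valuations} then produces a unique $\phi'\in\Val^{-\infty}_{n-k}(V)$ with $\tau(\phi')=\tilde\tau$.

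Next I identify $\phi'$ with $\phi$. Both are even, continuous, translation-invariant valuations of degree $n-k$, so by Klain's theorem it suffices to match their Klain functions on $\Gr_{n-k}(V)$. A direct calculation yields $\Kl_\phi(E)=|\det(P|_E)|$ when $E$ is transverse to $U$ (using that $P|_E\colon E\to V/U$ is then an isomorphism), and $\Kl_\phi(E)=0$ otherwise (since $PK$ has dimension less than $n-k$). The same Klain function can be read off from $\tilde\tau$ by transversally intersecting $[[U^\perp]]$ with $E^\perp\subset V^{*}$: when $E$ is transverse to $U$, the subspaces $U^\perp$ and $E^\perp$ meet only at the origin and the intersection yields a delta mass at $\xi=0$ weighted precisely by the Jacobian $|\det(P|_E)|$; otherwise $U^\perp$ and $E^\perp$ meet non-trivially and the corresponding restriction vanishes, matching the Klain function of $\phi$ exactly.

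The verification of the four properties is essentially routine. The main obstacle is the identification step: rigorously extracting $\Kl_{\phi'}$ from $\tilde\tau$, keeping track of orientations and of the Jacobian factor arising in the transversal intersection, requires careful bookkeeping. A more computational alternative that avoids this is to explicitly construct a smooth form $\omega$ representing $\phi$ via the Crofton formula $\phi(K)=\int_{U^\perp}\chi(K\cap(U+u'))\,du'$ (after choosing a complement to $U$ and identifying $V\simeq U\oplus U^\perp$), and verify $\tau(\phi)=\tilde\tau$ by computing the Rumin differential $D\omega$ directly from this representation.
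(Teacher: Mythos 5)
Your plan is a genuinely different and considerably longer route than the paper takes, and it has real gaps. The paper's proof is direct: since $\phi$ is a continuous translation-invariant valuation of degree $n-k$, its action on a smooth $\psi\in\Val_k^\infty(V)$ under the Alesker--Poincar\'e pairing is simply $\langle\phi,\psi\rangle=\psi(L)$ for a unit-volume convex body $L\subset U$. Writing $\psi(K)=\int_{\nc(K)}\omega$ and using $\nc(L)=L\times S(U^\perp)$, one reads off $T(\phi)=(-1)^{k+k(n-k)}\,dx_{k+1}\wedge\cdots\wedge dx_n\otimes[[S(U^\perp)]]$ directly from the sign convention \eqref{eq:extra_sign}, and then $\tau(\phi)=r^0T(\phi)$. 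No appeal to Proposition~\ref{prop:forms_121_valuations} or to Klain injectivity is needed; the answer falls out of the definition of $T(\phi)$.

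Regarding your approach: step (a), verifying that $\tilde\tau$ is valuation-type, is essentially correct (though ``contracting the integration current with $E$'' should be phrased as contracting the exterior-degree part and observing that $E_\xi=\xi$ lies in $T_\xi U^\perp$ for $\xi\in U^\perp$, so pulling $i_E\eta$ back to $U^\perp$ gives $n-k+1$ vectors in an $(n-k)$-dimensional space). The gap is in step (b). To extract $\Kl_{\phi'}$ from $\tilde\tau$ you would in effect need to compute $\tau(i_E^*\phi')$ for $i_E\colon E\hookrightarrow V$, which by Proposition~\ref{prop:pullback_linear} requires the pushforward $(i_E^\vee)_*\tilde\tau$ of Definition~\ref{def:pushforward}. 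But that pushforward is only defined for forms that are smooth outside the origin, and $\tilde\tau$, being supported on $U^\perp$, is not. Your informal picture of ``transversally intersecting $[[U^\perp]]$ with $E^\perp$'' is reasonable geometrically, but the paper does not supply the tools to make it rigorous, and carrying it out would amount to roughly the same computation the paper does directly. You also implicitly invoke a Klain injectivity theorem for generalized even valuations and would need to note that $\phi'$ is even (which does follow from the $(-\id)$-invariance of $\tilde\tau$, but should be said). Finally, your proposed ``computational alternative'' is unavailable: the paper explicitly notes that $\phi$ is continuous but \emph{not} smooth, so there is no smooth form $\omega$ with $\phi=\int_{\nc(\cdot)}\omega$, and the Rumin-differential computation you suggest cannot be set up.
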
 
We remark that $[[U^\perp]]\in\Omega^k_{-\infty}(V^*, \ori(U^\perp)\otimes \ori(V))=\Omega^k_{-\infty}(V^*, \ori(U))$, 
 while $P^*\vol_{V/U}\in \largewedge^{n-k}(V/U)^*\otimes \ori(V/U)\subset \largewedge^{n-k}V^*\otimes\ori(V)\otimes\ori(U)^*$, so that $P^*\vol_{V/U}  \otimes [[U^\perp]]$ is well-defined.
\begin{proof} 
 Let $V=\RR^n$ with the standard Euclidean structure and  orientation, and suppose that $U=\{x_{k+1}=\ldots= x_{n}=0\}$ and $\phi(K)=\vol_{U^\perp}(PK)$, where $P$ is the orthogonal projection onto $U^\perp$. Let $L\subset U$ be a convex body of unit volume.
	The valuation $\phi\in\Val_{n-k}(V)$ is continuous, but not smooth. Its action on $\psi\in \Val_{k}^\infty(V)$ is
	$ \langle \phi, \psi\rangle = \psi(L)$.
	If $\psi= \int_{\operatorname{nc}(K)} \omega$, then by eq. \eqref{eq:extra_sign} we have
	$$ \langle \phi,\psi\rangle = (-1)^{k} \int_{L\times S(U^\perp)} \omega $$
	and hence
	$$ T(\phi)= (-1)^{k+ k(n-k)} dx_{k+1}\wedge \cdots \wedge dx_{n} \otimes [[ S(U^\perp)]].$$
	We conclude that 
	\begin{equation}\label{eq:tauProj} \tau(\phi)= r^0(T(\phi))= (-1)^{k(n-k)} dx_{k+1}\wedge \cdots \wedge dx_{n} \otimes [[ U^\perp]].\end{equation}

\end{proof}

\subsection{Operations on valuations}

Recall for the following that for any generalized form $\omega\in\Omega_{-\infty}(V)$, $i_*\omega$ and $p^*\omega$ are well-defined generalized forms, where $i:V\to W$ denotes any monomorphism, and $p:W\to V$ any epimorphism. 

Here we describe some operations on valuations in terms of the $0$-homogeneous current representation. Given a linear map $f:V_1\to V_2$, one can define the pullback $f^*:\Val(V_2)\to \Val(V_1)$, and the pushforward $f_*:\Val(V_1)\otimes\Dens(V_1^*)\to \Val(V_2)\otimes\Dens(V_2^*)$, see \cite{Alesker:Fourier}.

Recall that if $f$ is a monomorphism, then $f^*$ maps smooth valuations to smooth valuations, while if $f$ is an epimorphism, then $f^*$ extends by continuity to a linear map 
$$ f^*\colon \Val^{-\infty}_k(W)\to \Val^{-\infty}_k(V).$$

 Let $n_V=\dim V$ and  $n_W=\dim W$ in the following.
\begin{Proposition}\label{prop:pullback_linear}
	Let $f\colon V\to W$ be a linear map. The pullback $f^* \colon \Val^{\infty}_k(W)\to \Val^{-\infty}_k(V)$ is then given by 
	$$ \tau(f^*u )=  f^\vee \circ  (f^\vee)_* \tau(u),  $$
	where  $(f^\vee)_*:\Omega_{-\infty}^{n_W-k}(W^*, \wedge^k W^*\otimes \ori(W))\to \Omega^{n_V-k}_{-\infty}(V^*, \wedge^k W^*\otimes \ori(V))$ is the pushforward by $f^\vee \colon W^*\to V^*$, and $f^\vee:\wedge^k W^*\to \wedge ^k V^*$ is applied to the value of the form.
\end{Proposition}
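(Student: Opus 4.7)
My plan is to factor $f = j\circ p$, with $p\colon V\twoheadrightarrow V/\Ker f$ the canonical epimorphism and $j\colon V/\Ker f\hookrightarrow W$ a monomorphism, and to handle the two cases separately. Both sides of the desired identity are functorial in $f$: the left side via $f^* = p^*\circ j^*$ on valuations, and the right side via $f^\vee = p^\vee\circ j^\vee$ together with functoriality of the pushforward of $0$-homogeneous currents. Hence it suffices to verify the formula when $f$ is a monomorphism and when $f$ is an epimorphism.

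\textbf{Monomorphism case.} For $j\colon V\hookrightarrow W$ and $u\in\Val^\infty_k(W)$ represented by $u(K) = \int_{\nc(K)}\omega$, the normal cycle of $j(K)\subset W$ fibers over $\nc(K)\subset V\times\PP_+(V^*)$, with fiber at $(x,[\xi])$ equal to the oriented cosphere of $\Ker j^\vee$ above $[\xi]$. Integration along these fibers produces an explicit representative $\omega'$ for $j^* u$. Upon translating $\omega\leadsto\tau(u)$ and $\omega'\leadsto\tau(j^* u)$ via~\eqref{eq:extra_sign} and the Hodge star, the fiber integration turns into exactly the pushforward of $0$-homogeneous currents along the epimorphism $j^\vee\colon W^*\twoheadrightarrow V^*$ prescribed by Definition~\ref{def:pushforward}, whose distributional formulation is provided by Lemma~\ref{lem:pushforward_forms}. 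The outer action $j^\vee\colon\wedge^k W^*\to\wedge^k V^*$ encodes the corresponding restriction of coefficient values.

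\textbf{Epimorphism case and main obstacle.} For $p\colon V\twoheadrightarrow W$ the map $p^\vee\colon W^*\hookrightarrow V^*$ is a monomorphism, so $(p^\vee)_*\tau(u)$ is the ordinary pushforward of currents, supported on the subspace $p^\vee(W^*)\subset V^*$. The first task is to verify that $p^\vee\circ(p^\vee)_*\tau(u)$ satisfies the valuation-type conditions of Proposition~\ref{prop:forms_121_valuations}: $0$-homogeneity and $i_E = 0$ are inherited from $\tau(u)$ (the Euler field of $V^*$ restricts along $p^\vee$ to the Euler field of $W^*$); closedness $d = 0$ follows because pushforward along a linear embedding commutes with $d$; and $\tau\wedge E = 0$ requires decomposing $E_{V^*}$ into its $p^\vee(W^*)$-component and a transverse component, and noting that the pushforward current annihilates the latter. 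Once this is established, the resulting form defines a generalized valuation $\tilde u\in\Val^{-\infty}_k(V)$, and the identification $\tilde u = p^* u$ follows by matching the explicit normal-cycle description of the pullback of valuations under an epimorphism on the level of defining differential forms: pulling the form representative back via $p\times\id$ on $V\times\PP_+(V^*)$ corresponds, after passing to $0$-homogeneous currents via~\eqref{eq:extra_sign}, precisely to the composition $p^\vee\circ(p^\vee)_*$. The main obstacle is the verification of the valuation-type conditions for $p^\vee\circ(p^\vee)_*\tau(u)$, since this is a genuinely singular distributional current whose transverse directions to $p^\vee(W^*)$ must be handled with care.
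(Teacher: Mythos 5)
Your factorization $f = j\circ p$ is sound, and the plan of reducing to the monomorphism and epimorphism cases is a legitimate alternative to the paper's argument. But the paper does not go this route at all: for $k>0$ it simply invokes the explicit normal-cycle description of pullback under linear maps established in \cite{alesker_radon}, translated to the $0$-homogeneous current via \eqref{eq:extra_sign}. In effect you are proposing to re-derive the content of that reference rather than cite it, which is a bigger undertaking than your one-paragraph sketch suggests.

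The more significant issue is that you misidentify the main obstacle. Checking the valuation-type conditions for $p^\vee\circ(p^\vee)_*\tau(u)$ in the epimorphism case is in fact routine: since $p^\vee(W^*)$ is a linear subspace of $V^*$, the Euler vector field of $V^*$ is automatically tangent along it and has no ``transverse component'' to worry about; $0$-homogeneity, $i_E=0$, and $d=0$ all follow immediately from the corresponding properties of $\tau(u)$ and functoriality of the pushforward along a closed linear embedding; and verticality follows pointwise because $p^\vee(\tau(u)(\zeta))\wedge p^\vee(\zeta) = p^\vee(\tau(u)(\zeta)\wedge\zeta)=0$. The genuinely hard step is the one you dismiss in a clause, namely ``matching the explicit normal-cycle description of the pullback of valuations under an epimorphism.'' The pullback $p^*\colon\Val^{-\infty}_k(W)\to\Val^{-\infty}_k(V)$ is defined by duality, not by a normal-cycle integral, so there \emph{is} no readily available ``explicit normal-cycle description'' to match against until one proves one --- which is precisely what \cite{alesker_radon} does and what the paper cites. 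As written, your epimorphism case assumes the key input.

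Your monomorphism case also needs more care. The normal cycle $\nc_W(j(K))$ does not simply fiber over $\nc_V(K)$: it has a second piece supported over the relative interior of $K$ consisting of conormal directions in $\Ker j^\vee$, and the ``fibers'' over $\nc_V(K)$ degenerate to $\PP_+(\Ker j^\vee)$ at the boundary --- this is exactly why Definition~\ref{def:pushforward} is formulated in terms of the oriented blow-up $\widetilde{\PP_+(V^*)}$ rather than a naive fiber integration. Your sketch elides these compactification issues, which is where the analysis actually lives. Finally, the case $k=0$ (where $\tau(u)$ is a delta at the origin and the degree count in Definition~\ref{def:pushforward} is exceptional) needs to be handled separately, as the paper does.
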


\begin{proof}
	Assume first $k=0$. We then have $\tau(\chi)=\delta_0=[[\{0\}]]$.  If  $f$ is either an  epimorphism  or a monomorphism, then \[(f^\vee)_*(\delta_0)=\delta_0.\] 
		Indeed, in the first case the pushforward is the usual pushforward under proper immersions. In the second case, our Definition~\ref{def:pushforward} applies. Since  $f^*\chi=\chi$, the conclusion for $k=0$ follows.
	 All other cases follow immediately from \cite{alesker_radon}, adapted to the translation-invariant case  and bearing in mind the additional sign of eq. \eqref{eq:extra_sign} imposed by our convention for the current of a valuation.
\end{proof}

Recall that if $f$ is an epimorphism, then $f_*$ maps smooth valuations to smooth valuations, while if $f$ is a monomorphism, then  $f_*$ extends by continuity to a linear map
$$ f_*\colon \Val^{-\infty}_{k}(V)\otimes \Dens(V^*)\to \Val^{-\infty}_{k+n_W-n_V}(W)\otimes \Dens(W^*).$$

\begin{Proposition}\label{prop:pushforward_linear}
	Let $f\colon V\to W$ be a linear map. 	For $0\leq k\leq n_V$ and $\phi\in  \Val_k^{-\infty}(V)\otimes \Dens(V^*)$, we have 
	\[\tau(\phi)\in\Omega^{n_V-k}_{-\infty}(V^*, \wedge^k V^*\otimes \wedge^{n_V} V)\]
	and so	
	$$ \widetilde \tau(\phi):=\ast_V\circ\tau(\phi)\in \Omega_{-\infty}^{n_V-k}(V^*, \wedge^{n_V-k} V).$$
	The pushforward 
$$f_*\colon \Val^{\infty}_{k}(V)\otimes \Dens(V^*)\to \Val^{-\infty}_{k+n_W-n_V}(W)\otimes \Dens(W^*)$$
	is then given by
		\begin{equation}\label{eq:f_*} \widetilde\tau(f_*\phi )= f \circ (f^\vee)^* \widetilde\tau(\phi)\in \Omega^{n_V-k}_{-\infty}(W^*, \wedge^{n_V-k} W)\end{equation}
	if $k+n_W>n_V$. If $k+n_W=n_V$, then
	\begin{equation} \label{eq:f_*2} \widetilde\tau(f_*\phi )=   f \circ (f^\vee)_v^* \widetilde\tau(\phi)\in \Omega^{n_W}_{-\infty}(W^*, \wedge^{n_W} W),\end{equation}
	where the choice of $v\in V^*\setminus f^\vee(W^*)$ can be arbitrary.
\end{Proposition}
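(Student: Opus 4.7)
My plan is to derive the pushforward formula from the pullback formula of Proposition~\ref{prop:pullback_linear} via adjointness. For a test smooth valuation $\psi\in\Val^\infty_{n_V-k}(W)$ the natural pairing of valuations satisfies $\langle f_*\phi, \psi\rangle_W = \langle \phi, f^*\psi\rangle_V$, so the task is to translate both sides into operations on the associated $0$-homogeneous currents and read off $\widetilde\tau(f_*\phi)$. I would first factor $f = i \circ p$ with $p\colon V \to V/\Ker f$ an epimorphism and $i\colon V/\Ker f \hookrightarrow W$ a monomorphism. Both $f_* = i_* \circ p_*$ and the right-hand side $f\circ (f^\vee)^*$ are functorial under this factorization, since $(f^\vee)^* = (i^\vee)^* \circ (p^\vee)^*$ and the application of a linear map to the value of a form commutes with pullback, so it suffices to treat the epimorphism and monomorphism cases separately.

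In the epi case $f = p$, the dual $p^\vee\colon W^* \hookrightarrow V^*$ is injective, and $(p^\vee)^* \widetilde\tau(\phi)$ is the pullback of a $0$-homogeneous smooth-outside-origin form under a linear monomorphism, which is handled by Lemma~\ref{lem:pullback_forms}. Expanding $\langle \phi, p^*\psi\rangle_V$ using the pullback formula $\tau(p^*\psi) = p^\vee \circ (p^\vee)_* \tau(\psi)$, together with the usual adjointness between $(p^\vee)_*$ and $(p^\vee)^*$ on smooth forms and the Hodge star identifications, produces a pairing on $W^*$ from which one reads off $\widetilde\tau(p_*\phi) = p \circ (p^\vee)^* \widetilde\tau(\phi)$. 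The mono case $f = i$ is analogous: $i^\vee$ is then surjective, $(i^\vee)^*$ is the standard submersion pullback of generalized forms, and the same adjointness computation goes through, with the output $i_*\phi$ only generalized reflecting the singularity of $(i^\vee)^* \widetilde\tau(\phi)$ along $\Ker i^\vee$. In both cases the resulting form must be verified to satisfy the four conditions of Proposition~\ref{prop:forms_121_valuations}, but this is routine since $\widetilde\tau(\phi)$ already does and since pullback along a linear map and application of $f$ to values intertwine with $i_E$, $d$, and with wedging by $E$.

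The boundary case $k + n_W = n_V$ is where I expect the main obstacle. Here $f_*\phi \in \Val_0(W)\otimes \Dens(W^*) = \CC\cdot\chi\otimes \Dens(W^*)$, so $\widetilde\tau(f_*\phi)$ is a multiple of the delta top form at the origin of $W^*$. The naive pullback $(f^\vee)^* \widetilde\tau(\phi)$ would restrict a $0$-homogeneous form to a subspace along which it is too singular, and Proposition~\ref{prop:restrict_delta} must be invoked to regularize it as the directional limit $(f^\vee)^*_v$, which yields a constant multiple of $\delta_0$. The closedness of $\widetilde\tau(\phi)$ guarantees that this constant is independent of $v$; identifying it with the coefficient of $\delta_0$ in $\tau(f_*\phi)$ requires careful bookkeeping of signs, orientations and densities through the Hodge star, and is most cleanly carried out by testing $\phi$ on the projection-type valuations of Lemma~\ref{lemma:tauProj}, for which both sides can be computed explicitly. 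The non-boundary case is, by contrast, a routine if notation-heavy adjointness computation once the pullback formula is in hand.
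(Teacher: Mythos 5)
Your plan --- derive the pushforward formula from the pullback formula (Proposition~\ref{prop:pullback_linear}) via the adjointness $\langle f_*\phi, \psi\rangle_W = \langle \phi, f^*\psi\rangle_V$, after factoring $f = i\circ p$ --- is a legitimate and more uniform strategy than the one the paper actually follows. The paper treats the non-boundary cases asymmetrically: for $f$ an epimorphism it cites an external result (Lemma~7.4 of Kotrbat\'y--Wannerer), and only for $f$ a monomorphism does it fall back on duality, as you propose to do across the board. One caution about the factorization: when $f$ is neither injective nor surjective, the intermediate pushforward $p_*\phi$ can land in degree zero even though $f_*\phi$ does not (take $k = n_V - \dim f(V) > n_V - n_W$), so the boundary analysis cannot be quarantined to the composite; it reappears inside the epimorphism factor.

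The genuine gap is in your treatment of the boundary case. Your suggestion to ``test $\phi$ on the projection-type valuations of Lemma~\ref{lemma:tauProj}'' does not by itself close the argument: projection valuations do not span a dense subspace of $\Val^\infty_{n_V-n_W}(V)$, so matching both sides on them would pin down the formula only if one already knew the two sides agree up to a universal constant --- and establishing that via $\GL$-equivariance plus a one-dimensionality argument would invoke the irreducibility theorem the paper is explicitly avoiding. If instead you mean that the relevant test valuations $\eta$ on $W$ are top-degree, so that their pullbacks $f^*\eta$ are exactly the projection valuations with explicit $\tau$-current, then the adjointness route still requires an explicit expression for the valuation pairing in terms of $\tau$-currents (it is only implicit in the definition of $T(\phi)$) together with the $\nu_\epsilon$-regularization of Lemma~\ref{lem:pushforward_forms}, none of which you supply. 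The paper proceeds more directly: it writes $\widetilde\tau(\phi)=r^0\, d\widetilde\omega$ for a primitive $\widetilde\omega$, reads off $f_*\phi = \chi\otimes f\bigl(\int_{S(f^\vee(W^*))}\widetilde\omega\bigr)$, and matches this with the constant $c(v)$ from equation~\eqref{eq:const} of Proposition~\ref{prop:restrict_delta} via Stokes' theorem --- no adjointness and no irreducibility needed.
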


\begin{proof}
	 Suppose $k>0$ and $k+n_W-n_V>0$.
	 If $f$ is an epimorphism, then the statement follows directly from \cite[Lemma 7.4]{KotrbatyWannerer:Harmonic}. If $f$ is a monomorphism, then using the fact that $f_*$ is defined as the dual of $f^*$, the claim follows from a short computation. 
	
	Let us verify the statement for $k=0$, that is for $f_*(\chi)$. Since every linear map factors as $V\to f(V)\to W$, we have that $f_*\chi=0$ if $f$ is not injective. In this case also right-hand side of \eqref{eq:f_*} vanishes.  Hence we may assume that $f$ is a monomorphism. Fix orientations and Lebesgue measures on $V$ and $W$ for simplicity. This also fixes orientations and Lebesgue measures on $f(V)$ and $W/f(V)$ and elements in the top exterior powers of $V$ and $f(V)$ that we denote by $\sigma_V$ and $\sigma_{f(V)}$.  One has $f\otimes (f^\vee)^*(\sigma_V \otimes [[0]] )=f(\sigma_V)\otimes [[\Ker f^\vee]] $
	, while $$f_*\chi (K) =\int_{W/f(V)}\chi((f(V)+w)\cap K)dw= \vol(P_{W/f(V)} K)  $$
and so by Lemma~\ref{lemma:tauProj} one has $\widetilde\tau(f_*\chi)=\sigma_{f(V)} \otimes [[f(V)^\perp]] =f(\sigma_V)\otimes [[\Ker f^\vee]] $, as required.

It remains to verify the statement for $k=n_V-n_W$. Since every linear map factors as $V\to f(V)\to W$, the pushforward $f_*\phi$ vanishes if  $f$ is not surjective. In this case also right-hand side of \eqref{eq:f_*2} vanishes.  We assume therefore that $f$ is an epimorphism.  For convenience fix a Euclidean structure on $V$, and orientations on $V$ and $W$, such that $v$ is orthogonal to $f^\vee(W^*)$. Assume $\widetilde\tau=\widetilde\tau(\phi)=r^0\widetilde\beta$ with $\widetilde\beta\in \Omega^{n_W}(S(V^*), \wedge ^{n_W}V)$. 
Note that  $\widetilde\beta=d\widetilde\omega$ for some $\widetilde\omega\in \Omega^{n_W-1}(S(V^*), \wedge ^{n_W}V)$. Then $f_*\phi=\chi\otimes \sigma$, where $\sigma=f(\int_{S(W^*)}\widetilde\omega)\in  \wedge ^{n_W}W$. Hence 
$$ \widetilde\tau(f_* \phi) = f(\int_{S(f^\vee (W^*))}\widetilde\omega) \delta_{0},$$
and comparing with \eqref{eq:const} completes the proof.

 \end{proof}

\begin{Proposition}\label{prop:exterior_product}
	Let $\phi\in\Val_k^\infty(V)$ and $\psi\in\Val_l^\infty(W)$. Then $\tau(\phi\boxtimes \psi)= (-1)^{(n-k)l}\tau(\phi)\boxtimes\tau(\psi)$.
\end{Proposition}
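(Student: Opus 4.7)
The plan is to invoke Proposition~\ref{prop:forms_121_valuations}: both sides of the claimed identity should be valuation-type forms on $V^* \times W^*$, and it then suffices to verify they determine the same valuation.

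\textbf{Step 1.} I would first check that the right-hand side $(-1)^{(n-k)l}\tau(\phi)\boxtimes\tau(\psi)$ is valuation-type on $V^*\times W^*$, with values in $\wedge^{k+l}(V\times W)^*\otimes\ori(V\times W)$ via the canonical identifications. Writing the exterior product as $\pi_{V^*}^*\tau(\phi)\wedge\pi_{W^*}^*\tau(\psi)$ and using that the Euler field on $V^*\times W^*$ decomposes as $E=E_{V^*}+E_{W^*}$, each of the four conditions of Proposition~\ref{prop:forms_121_valuations} splits across the two factors: $0$-homogeneity and $d=0$ follow from the product structure; for $i_E=0$ the Leibniz rule reduces this to the individual conditions $i_{E_{V^*}}\tau(\phi)=0$ and $i_{E_{W^*}}\tau(\psi)=0$; and for $\wedge E=0$, expanding $(\xi+\eta)\wedge(\alpha\wedge\beta)$ for $\alpha\in\wedge^kV^*$, $\beta\in\wedge^l W^*$ at $(\xi,\eta)$ and using $\xi\wedge\alpha=0$, $\eta\wedge\beta=0$ produces $0$ after a sign. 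Hence by Proposition~\ref{prop:forms_121_valuations} there is a unique $\zeta\in\Val^\infty_{k+l}(V\times W)$ with $\tau(\zeta)=(-1)^{(n-k)l}\tau(\phi)\boxtimes\tau(\psi)$.

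\textbf{Step 2.} The main task is to identify $\zeta=\phi\boxtimes\psi$. I would represent $\phi$ and $\psi$ through their defining forms $\omega_\phi,\omega_\psi$ via \eqref{eq:smoothVal}, so that $\tau(\phi)=\phi(\{0\})\delta_0+r^0((-1)^{n-k}a^*D\omega_\phi+\theta_\phi)$ and likewise for $\psi$. Using the integral representation of the exterior product in terms of $\omega_\phi$ and $\omega_\psi$ from~\cite{AleskerBernig:Product} to compute $\tau(\phi\boxtimes\psi)$, and comparing against $(-1)^{(n-k)l}\tau(\phi)\boxtimes\tau(\psi)$ on $V^*\times W^*$, the identity should follow. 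The sign $(-1)^{(n-k)l}$ is the Koszul sign incurred when commuting the degree-$(n-k)$ form factor past the $\wedge^l W^*$ value factor during the identification $\wedge^k V^*\otimes\wedge^l W^*\simeq\wedge^{k+l}(V\times W)^*$.

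\textbf{Main obstacle.} The Rumin differential on $\mathbb P_+((V\times W)^*)$ does \emph{not} decompose as a product of the Rumin differentials on $\mathbb P_+(V^*)$ and $\mathbb P_+(W^*)$, because the contact structures of these sphere bundles do not factor. The key advantage of the $0$-homogeneous current framework developed in this paper is precisely that the exterior product $\tau(\phi)\boxtimes\tau(\psi)$ on the \emph{dual} vector spaces is natural, sidestepping the need to match contact structures. Step 2 therefore reduces to carefully writing out the two sides and tracking bookkeeping signs, rather than comparing contact-geometric operations on the cosphere bundle. I expect the bulk of the effort to go into this sign computation.
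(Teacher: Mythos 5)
Your Step~1 is correct (both sides are valuation-type, and the four conditions factor across the two coordinates) but it is not where the difficulty lies, and the paper does not use this uniqueness argument at all.

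The genuine gap is in Step~2, and the ``Main obstacle'' paragraph, rather than resolving it, papers over it. You correctly identify that the Rumin differential on $\PP_+((V\times W)^*)$ does not factor, but you then assert without argument that the $0$-homogeneous current framework ``sidesteps'' this, leaving only ``bookkeeping signs.'' That is not what happens. What actually needs to be proved is the identity
\[
r^0\bigl(T(\phi\boxtimes\psi)\bigr)=(-1)^{(n-k)l}\,r^0T(\phi)\boxtimes r^0T(\psi),
\]
and the left side must be accessed through the blow-up description of $T(\phi\boxtimes\psi)$ from~\cite{AleskerBernig:Product}: namely $T(\phi\boxtimes\psi)=F_*\Phi^*(T(\phi)\boxtimes T(\psi))$, where $F$ is the blow-down map $S^{m-1}\times[0,\pi/2]\times S^{n-1}\to S^{m+n-1}$ and $\Phi$ is the projection onto $S^{m-1}\times S^{n-1}$. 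Nothing in your proposal engages with this. The paper's proof proceeds by showing that both sides are locally integrable forms (using Lemma~\ref{lem:tau_L1}), so that it suffices to verify equality pointwise on the dense open set off $\RR^m\times\{0\}\cup\{0\}\times\RR^n$; there the radial projection $\pi$ factors through the blow-up, and the claim reduces to a diagram chase built on $F_0\circ R=\id$, $\Phi_0\circ R=\pi\times\pi$, $\pi\circ F_0=F\circ\beta$, $F^*F_*=\id$, and $\Phi_0=\Phi\circ\beta$. That chase, together with the local-integrability reduction, is the substance of the proof; the sign $(-1)^{(n-k)l}$ is a minor conventional bookkeeping factor. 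Your plan omits the blow-up geometry entirely, does not mention local integrability as the device permitting the reduction to a dense open set, and mischaracterizes the remaining work as sign-chasing. Also note that the degenerate cases $l=0$ (and $k=l=0$) require a separate short argument, which the paper handles via~\cite{alesker_radon}.
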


 Here the factor $(-1)^{(n-k)l}$ accounts for the difference between viewing forms on $\PP_+(V^*)$ with values in $\largewedge^\bullet V^*$ as translation-invariant forms on the cosphere bundle $V\times \PP_+(V^*)$, which is the setting of \cite{AleskerBernig:Product}, when taking wedge products. 

\proof
We may assume that $\phi\in\Val^\infty_k(V)$, $\psi\in\Val^\infty_l(W)$. Fix Euclidean inner products. 
Assume first $k, l>0$, so that $\tau(\phi)$, $\tau(\psi)$ are locally integrable. Let us recall the construction of the exterior product of $\phi\in \Val^\infty(\R^m)$ and  $\psi\in \Val^\infty(\R^n)$ in these terms. On the blow-up space 
$$ \Sigma:= S^{m-1}\times [0,\pi/2] \times S^{n-1}$$
one has the blow-down map 
$$F:\Sigma \to S^{m+n-1},\quad F(u,\theta, v)= (\cos(\theta) u, \sin(\theta) v)\in S^{m+n-1}$$
and the projection 
$$ \Phi:\Sigma\to  S^{m-1}\times S^{n-1},\quad \Phi(u,\theta, v) = (u,v).$$
The defining generalized $n$-form of $\phi\boxtimes \psi$ is 
$$ T(\phi\boxtimes\psi)=F_* \Phi^* ( T(\phi)\boxtimes T(\psi))\in \Omega_{-\infty}(\mathbb P_+(V^*\times W^*), \wedge^\bullet (V^*\times W^*)).$$ 

We use $\pi$ to denote the radial projection in all spaces. 
Define \[F_0\colon (0,\infty)\times \Sigma\to \R^{m+n},\quad F_0(t,u,\theta,v)=  (t \cos(\theta) u, t\sin(\theta) v)\] so that 
$\pi \circ F_0 = F$.  Define also $\beta:(0,\infty)\times\Sigma\to \Sigma$, $\beta(r, \sigma)=\sigma$. 
\[\Phi_0:(0,\infty)\times \Sigma\to S^{m-1}\times S^{n-1}, \quad \Phi_0=\Phi\circ \beta.\]

Finally, define the map
$$  R\colon \R^m\setminus\{0\}  \times \R^n\setminus\{0\}\to (0,\infty)\times \Sigma $$
so that  $F_0\circ R = \id$ and $\Phi_0\circ R = \pi \times \pi$.
Observe also that $F$, $R$, $F_0$ are all diffeomorphisms outside of submanifolds of positive codimension.

As both sides of the claimed equality 
\[ r^0F_* \Phi^* ( T(\phi)\boxtimes T(\psi))=  (-1)^{(n-k)l} r^0T(\phi)\boxtimes r^0T(\psi)\] are locally integrable, it suffices to show that
\[\pi^*F_*\Phi^*(T(\phi)\boxtimes T(\psi))=(\pi\times \pi)^*(T(\phi)\boxtimes T(\psi))\]
holds outside of $\R^m\times \{0\} \cup \{0\}\times\R^n$. 
Noting that $(\pi\times\pi)^*=R^*\Phi_0^*$,  it suffices to verify that \begin{align*}R^*\Phi_0^*=\pi^*F_*\Phi^*\iff\Phi_0^*=F_0^*\pi^*F_*\Phi^*.\end{align*}
Since $\pi\circ F_0=F\circ\beta$, we find $F_0^*\pi^*=\beta^*F^*$ and we should verify
\[\Phi_0^*=\beta^*F^*F_*\Phi^*,\]
and since $F^*F_*=\id$, we are left with the equality 
\[\Phi_0^*=\beta^*\Phi^*,\]
which holds since $\Phi_0=\Phi\circ\beta$. 

Now assume $k>0, l=0$, so that $\psi=\chi$. This case then follows immediately from \cite[equation 2.1.13]{alesker_radon}. 
The remaining case $k=0,l=0$ amounts to the verification $\tau(\chi\boxtimes\chi)=\tau(\chi)\boxtimes\tau(\chi)$, which follows from $\delta_0\boxtimes\delta_0=\delta_0$.

\endproof

\subsection{Symmetry of closed vertical forms}

Closed, vertical forms on the sphere bundle play a key role in valuation theory. In the translation invariant case, such forms exhibit a certain symmetry, as we now proceed to show.

\begin{Lemma}\label{lemma:gray_symmetry}
	If $\tau \in \Omega^{n-k}(S^{n-1}, \largewedge^k (\RR^n)^*)$ is closed and vertical, then the form $\overline \tau\in   \Omega^{n-k}(S^{n-1}, \largewedge^{n-k} (\RR^n)^*)$, corresponding to $\tau$ under the Hodge star $\largewedge^k (\RR^n)^* \simeq 
	\largewedge^{n-k} (\RR^n)^*$, belongs to the subspace 
	$ C^\infty (S^{n-1}, \Sym^2 (\largewedge^{n-k} T^* S^{n-1}))$. 
\end{Lemma}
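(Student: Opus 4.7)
The plan is to derive the symmetry from a single algebraic identity obtained by combining closedness and verticality. Extend $\tau$ to a current $r^0\tau$ on $V^*$; by Proposition~\ref{prop:forms_121_valuations} (and the computations in its proof) we have $d(r^0\tau)=0$ and $(r^0\tau)\wedge E=0$, where $E$ denotes the tautological $V^*$-valued function on $V^*$, $\xi\mapsto\xi$. Differentiating the verticality identity and using closedness yields the key identity
\[
(r^0\tau)\wedge dE \;=\; 0 \quad \text{in } \Omega^{n-k+1}(V^*\setminus\{0\},\wedge^{k+1}V^*),
\]
where $dE\in\Omega^1(V^*,V^*)$ is the canonical soldering $1$-form that sends each tangent vector $X\in T_\xi V^*=V^*$ to itself.

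Fix $u\in S^{n-1}$ and identify $W:=u^\perp$ with $T^*_u S^{n-1}$ via the Euclidean structure. Verticality at $u$ gives a unique decomposition $\tau_u=u^\flat\wedge \eta_u$, where we view
\[
\eta_u\colon \wedge^{n-k}W \longrightarrow \wedge^{k-1}W
\]
as a linear map. After applying Hodge duality to the $V^*$-values, $\bar\tau_u=\ast_W\circ\eta_u$ becomes an endomorphism of $\wedge^{n-k}W$; the claim $\bar\tau_u\in\Sym^2\wedge^{n-k}T^*_uS^{n-1}$ is equivalent to the self-adjointness of this endomorphism with respect to the Euclidean inner product.

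To get a pointwise condition, evaluate the identity $(r^0\tau)\wedge dE=0$ at $u$ on tangent vectors $X_0,\dots,X_{n-k}\in T_uV^*=V^*$. The condition $i_E(r^0\tau)_u=0$ restricts us to $X_i\in W$; substituting $\tau_u=u^\flat\wedge\eta_u$ and cancelling the common factor $u^\flat$ then produces the purely algebraic \emph{Koszul closure}
\[
\sum_{\ell=0}^{n-k}(-1)^\ell\, X_\ell\wedge \eta_u(X_0\wedge\dots\wedge\widehat{X}_\ell\wedge\dots\wedge X_{n-k}) \;=\; 0 \in\wedge^{k}W
\]
for all $X_0,\dots,X_{n-k}\in W$.

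It remains to translate this Koszul closure into self-adjointness of $\ast_W\circ\eta_u$. In an orthonormal basis of $W$, writing $\eta_u(e_M)=\sum_J t_{MJ}\, e_J$, the Koszul closure unpacks into
\[
\sum_{i\in N\cap J'} \epsilon(i,N)\,\epsilon(i,J')\, t_{N\setminus\{i\},\, J'\setminus\{i\}} \;=\; 0 \qquad (|N|=n-k+1,\ |J'|=k),
\]
where $\epsilon(i,S)$ denotes the Koszul sign defined by $e_i\wedge e_{S\setminus\{i\}}=\epsilon(i,S)\,e_S$. For $L,M\subset\{1,\dots,n-1\}$ of size $n-k$ with $|L\triangle M|=2$, specializing to the unique $N,J'$ whose intersection equals $L\triangle M$ produces a two-term relation. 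The main obstacle is a sign computation showing that these Koszul signs combine with the Hodge signs $\sigma_K$ (defined by $\ast_We_K=\sigma_K e_{K^c}$) to give exactly the equality of the $(L,M)$ and $(M,L)$ matrix entries of $\bar\tau_u$; the general case $|L\triangle M|>2$ then follows by chaining elementary transpositions along a path between $L$ and $M$.
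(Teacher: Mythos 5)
Your derivation of the algebraic identity is sound and essentially parallel to the paper's. Where you pass from $(r^0\tau)\wedge E=0$ and $d(r^0\tau)=0$ to $(r^0\tau)\wedge dE=0$, restrict to $u\in S^{n-1}$, and peel off the factor $u^\flat$, the paper computes $0=d(i_R\overline\tau)=\sum d\xi_i\wedge i_{\partial_{x_i}}\overline\tau$ using the Reeb field $R=\sum\xi_i\partial_{x_i}$. These produce the same pointwise Koszul-type (first Bianchi) identity on the double form $\eta_u$. The paper then stops and invokes Gray's Proposition 2.2 for the algebraic implication ``Bianchi identity $\Rightarrow$ symmetry of the Hodge-dualized double form.''

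Your proposal instead tries to prove that algebraic implication directly, and here there is a real gap. You reduce the claim $\overline\tau_{LM}=\overline\tau_{ML}$ to the case $|L\triangle M|=2$ (modulo a sign computation you explicitly leave undone) and then assert that the general case ``follows by chaining elementary transpositions along a path between $L$ and $M$.'' That step is incorrect as stated. The two-term instances of the Koszul closure arise precisely from $|N\cap J'|=2$, and they relate only entries $\overline\tau_{LM}$, $\overline\tau_{ML}$ with $|L\triangle M|=2$; no two-term relation involves an entry with $|L\triangle M|>2$, so chaining them cannot reach, say, $\overline\tau_{\{1,2\}\{3,4\}}=\overline\tau_{\{3,4\}\{1,2\}}$. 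To reach that you must use the relations with $|N\cap J'|\geq 3$, which are genuine $r$-term Bianchi identities, and combine several of them in the same way one derives the pair-exchange symmetry of the Riemann curvature tensor from the first Bianchi identity — an argument, but not a chain of transpositions. So your proposal as written does not close; you would need either to carry out the full Bianchi-type combinatorics (including the sign bookkeeping for all $|N\cap J'|$) or, as the paper does, cite the known algebraic statement (Gray, Proposition 2.2, or the analogous result in the double-forms literature).
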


\begin{proof}
	Let $(x_1,\ldots, x_n,\xi_1,\ldots, \xi_n)$ be the standard coordinates on $\RR^n\times \RR^n \supset \RR^n\times S^{n-1}$. Let $R= \sum \xi_i \frac{\partial}{\partial x_i}$ denote the Reeb vector field. Since $\tau$ is vertical, one has $i_R \overline \tau=0$. Hence, using that $\tau$ is closed, one obtains 
	$$ 0 = d(i_R\overline \tau ) = \sum  d\xi_i \wedge i_{\frac{\partial}{\partial x_i}} \overline \tau.$$
	Hence  by \cite[Proposition~2.2]{Gray:SomeRelations} (see also \cite[Remark 5.8]{BFSW:Tube}) we deduce that 
	$\tau_\xi \in \Sym^2 (\largewedge^{n-k} T^*_\xi S^{n-1})$,  concluding the proof.
\end{proof}

Let $\tau \in\Omega^{n-k}_{-\infty}(V^*, \wedge^{k}V^*\otimes \ori(V))$.
Using the identification $\wedge ^k V^*\simeq \wedge^{n-k}V\otimes \wedge^n V^*$, we define the generalized form 
$\overline \tau\in\Omega^{n-k}_{-\infty}(V^*, \wedge^{n-k}V\otimes \Dens(V))$ corresponding to $\tau$. 
\begin{Proposition}\label{prop:symmetry}
	Let $\tau \in \Omega^{n-k}_{-\infty} ( V^*, \largewedge^k V^* \otimes \ori(V))$ be the $0$-homogeneous current of a valuation. Considered as an element of $C^{-\infty}(V^*, \largewedge^{n-k} V \otimes \largewedge^{n-k} V\otimes \Dens(V))$, $\overline \tau$ belongs to the subspace  
	$C^{-\infty}(V^*, \Sym^2(\largewedge^{n-k} V)\otimes \Dens(V))$.
\end{Proposition}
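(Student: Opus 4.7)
\emph{Proof plan.} The plan is to reduce to the smooth case and then invoke Lemma~\ref{lemma:gray_symmetry}. First I would use that $\Val^\infty_k(V)$ is dense in $\Val^{-\infty}_k(V)$, that the assignments $\phi\mapsto\tau(\phi)\mapsto\overline{\tau}$ are continuous for the natural weak topologies, and that the subspace
$$C^{-\infty}(V^*, \Sym^2(\largewedge^{n-k}V)\otimes\Dens(V))\subset C^{-\infty}(V^*, \largewedge^{n-k}V\otimes\largewedge^{n-k}V\otimes\Dens(V))$$
is closed, being cut out by pairing against anti-symmetric test tensors. This reduces the claim to the smooth case.

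For smooth $\phi\in\Val^\infty_k(V)$, the case $k=0$ is trivial as $\largewedge^n V$ is one-dimensional, so assume $k\geq 1$. By Proposition~\ref{prop:forms_121_valuations}, $\tau(\phi)=r^0T$ for a smooth, closed, and vertical form $T\in\Omega^{n-k}(\PP_+(V^*),\largewedge^k V^*\otimes\ori(V))$. Fix an auxiliary Euclidean structure on $V$, which identifies $\PP_+(V^*)$ with $S^{n-1}$; Lemma~\ref{lemma:gray_symmetry} then yields, pointwise at each $\xi\in S^{n-1}$, that $\overline{T}_\xi\in\Sym^2(\largewedge^{n-k}T^*_\xi S^{n-1})\otimes\Dens(V)$. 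Since $r^0T=\pi^*T$ on $V^*\setminus\{0\}$, this pointwise symmetry propagates along every ray, and Lemma~\ref{lem:supp0} rules out any extra distributional contribution at the origin for $k<n$.

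The main technical point is matching the two symmetric subspaces: Lemma~\ref{lemma:gray_symmetry} provides symmetry in $\Sym^2(\largewedge^{n-k}T^*_\xi S^{n-1})$, whereas the proposition demands symmetry in the ambient $\Sym^2(\largewedge^{n-k}V)$. To bridge this, I would invoke the conditions $i_E\tau=0$ and $\tau\wedge E=0$ from Proposition~\ref{prop:forms_121_valuations}: transporting them through the Hodge star, both tensor slots of $\overline{\tau}_\xi$ are forced into $\largewedge^{n-k}(\Ker\xi)\subset\largewedge^{n-k}V$, and the chosen Euclidean structure identifies $\Ker\xi\subset V$ with $T^*_\xi S^{n-1}$. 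With these identifications the sphere-intrinsic symmetry translates directly into the desired ambient symmetry, and the auxiliary Euclidean structure drops out by the basis-free nature of the proposition.
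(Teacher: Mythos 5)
Your proof is correct and follows essentially the same strategy as the paper's: reduce to the smooth case, invoke Lemma~\ref{lemma:gray_symmetry}, and transfer the symmetry from the sphere to $V^*$ via $r^0$ together with an argument across the origin. The paper implements the reduction to the smooth case slightly differently, convolving the form $\tau'$ on $\PP_+(V^*)$ against an approximate identity $\rho_j\in C^\infty(\mathrm{SL}(V^*))$ and using $\rho_j*\overline\tau=r^0(\rho_j*\overline{\tau'})$ rather than appealing to density of $\Val^\infty$ in $\Val^{-\infty}$ and continuity of $\phi\mapsto\overline\tau$, and it extends across the origin via local integrability (Lemma~\ref{lem:tau_L1}) rather than the support argument of Lemma~\ref{lem:supp0}; both variants are valid, and your spelling out of why the intrinsic symmetry in $\Sym^2(\largewedge^{n-k}T^*_\xi S^{n-1})$ becomes ambient symmetry in $\Sym^2(\largewedge^{n-k}V)$ correctly fills in a step the paper leaves to the reader.
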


\begin{proof}
	If $k=0$, there is nothing to prove. Suppose therefore that $k>0$. 
	and use Lemma \ref{lem:zero_homogeneous} to write $\tau=r^0\tau'$, where $\tau'$ is vertical and closed.

	Assume first that $\tau'$ is a smooth form. Fix a Euclidean inner product on $V$. By Lemma~\ref{lemma:gray_symmetry} we know that $\overline\tau'$ is symmetric. It follows that $\overline \tau = \pi^* \overline \tau'$ is symmetric on $V\setminus\{0\}$. Since $\overline \tau$ is locally integrable, the symmetry of $\overline \tau$ follows. 
	
	In the general case, choose an approximate identity $\rho_j\in C^\infty(\mathrm{SL}(V^*))$, and set $\tau'_j=\rho_j\ast\tau'$. Then $\tau'_j$ is smooth, vertical and closed, and so $\rho_j\ast \overline\tau = r^0 ( \rho_j \ast \overline \tau')= r^0 \overline{\tau_j'}$ is symmetric. Taking $j\to\infty$ proves the proposition.
	
\end{proof}

\section{Fourier transform of differential forms}

\subsection{Fourier transform of smooth and generalized differential forms}
Let $V$ and $F$ be  finite-dimensional vector spaces of over the reals, and 
let $\Omega_{\mathcal S}^k(V, F)=\mathcal S(V, \largewedge^k V^* \otimes F)\subset \Omega^k(V,F)$ denote the Schwartz space of differential $k$-forms on $V$ with values in $F$ and rapidly decreasing coefficients.

\begin{Definition}\label{def:fourier_forms1}
	 The Fourier  transform $$\FC: \Omega_{\mathcal S}^k(V, F) \to \Omega_{\mathcal S}^{n-k}(V^*, \ori(V)\otimes F)$$  is defined as follows.  For $\omega \in \Omega_{\mathcal S}^k(V, F)$, 	
	the  Hodge star isomorphism \eqref{eq:ident}
	allows to consider $\omega$ as a map $\wt \omega:V\to  \largewedge^{n-k} V \otimes \Dens(V) \otimes \ori(V)\otimes F$.
	Hence we may define for $\xi\in V^*$
	$$ \FC(\omega)(\xi) =\int_{V} e^{2\pi \mathbf i \langle x,\xi\rangle}  \wt\omega(x) \in   \largewedge^{n-k} V\otimes \ori(V)\otimes F.$$ 
\end{Definition}

For $\omega\in \Omega^k(V, F)$ and $\eta\in \Omega_{c}^{n-k}(V, \ori(V) \otimes F^*)$, the 
isomorphism 
$$ \wedge^{k} V^* \otimes \wedge^{n-k} V^* \otimes \ori(V) \simeq \Dens(V)$$ 
allows to define the function $V\ni x\mapsto  \omega(x)\wedge  \eta(x)\in \Dens(V)$ and hence the pairing 
\begin{equation}\label{eq:canPairing} \langle \omega, \eta\rangle = \int_V  \omega(x)\wedge \eta(x) .\end{equation}
 Note that the inclusion of $\Omega^k(V, F)$ into $\Omega^k_{-\infty}(V, F)$ defined by this pairing coincides with the canonical inclusion of $C^\infty(M,\mathcal E)$ into $C^{-\infty}(M,\mathcal E)$.

\begin{Lemma}\label{lemma:Fsymmetry}
	Let $\omega\in  \Omega_{\mathcal S}^k(V, F)$ and $\eta \in \Omega_{\mathcal S}^k(V^*, F^*)$. 
	Then 
	$$ \langle \FC\omega, \eta\rangle = (-1)^{k(n-k)}\langle \omega, \FC\eta\rangle.$$
\end{Lemma}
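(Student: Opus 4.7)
The plan is to reduce the identity to Fubini's theorem together with one point-wise algebraic symmetry of the Hodge-star representatives. Concretely, I would unravel both pairings using the definition of $\FC$ and show that, after swapping the order of integration and the order of the wedge factors, the two expressions coincide up to the asserted sign.

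\textbf{Step 1 (Fubini and graded commutativity).} Writing out the left-hand side,
\[
\langle \FC\omega,\eta\rangle=\int_{V^*}\!(\FC\omega)(\xi)\wedge\eta(\xi)=\int_{V^*}\!\int_V e^{2\pi\mathbf i\langle x,\xi\rangle}\,\tilde\omega(x)\wedge\eta(\xi).
\]
Since $\omega$ and $\eta$ are Schwartz, Fubini applies, and we may exchange the two integrals. In the resulting integrand the wedge is taken inside $\wedge^\bullet V$: $\tilde\omega(x)$ contributes its $\wedge^{n-k}V$-factor and $\eta(\xi)$ its $\wedge^k V$-factor. Graded commutativity in $\wedge^\bullet V$ therefore yields
\[
\tilde\omega(x)\wedge\eta(\xi)=(-1)^{k(n-k)}\,\eta(\xi)\wedge\tilde\omega(x).
\]

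\textbf{Step 2 (symmetry of the Hodge star).} The main point is the pointwise identity
\[
\eta(\xi)\wedge\tilde\omega(x)=\omega(x)\wedge\tilde\eta(\xi),
\]
where both sides are canonically scalars after collapsing $\wedge^n V\otimes\wedge^n V^*\simeq\CC$ (and pairing $F\otimes F^*\to\CC$). This follows from the defining property $\alpha\wedge \ast_V\zeta=\langle\alpha,\zeta\rangle$ of the Hodge star isomorphism \eqref{eq:ident}, applied once with $(\alpha,\zeta)=(\eta(\xi),\omega(x))$ and once with the roles of $V$ and $V^*$ swapped; equivalently, both sides equal the diagonal contraction $\sum_I f_I(x)g_I(\xi)\langle w_I,v_I\rangle$ in any basis, as a one-line coordinate computation shows. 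This is the step that carries the algebraic content; the rest is formal.

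\textbf{Step 3 (recognize the Fourier transform of $\eta$).} Using the symmetry $\langle x,\xi\rangle=\langle\xi,x\rangle$ of the Fourier kernel we pull the $\xi$-integral inside to obtain
\[
\int_{V^*}e^{2\pi\mathbf i\langle x,\xi\rangle}\,\tilde\eta(\xi)=(\FC\eta)(x),
\]
so the whole expression reassembles as $(-1)^{k(n-k)}\int_V\omega(x)\wedge(\FC\eta)(x)=(-1)^{k(n-k)}\langle\omega,\FC\eta\rangle$, which is the claim.

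The only non-routine step is Step 2, and it is a brief bookkeeping argument about the canonical isomorphisms $\wedge^n V\otimes\Dens(V)\otimes\ori(V)\simeq\Dens(V^*)\otimes\Dens(V)\simeq\CC$ relating wedges in $\wedge^\bullet V$ to wedges in $\wedge^\bullet V^*$ under Hodge duality; no analysis beyond Fubini for Schwartz functions is needed.
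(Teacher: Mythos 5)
Your proof is correct and takes essentially the same route as the paper: the paper's one-line argument just states the pointwise identity $\wt\omega_x\wedge\eta_\xi=(-1)^{k(n-k)}\omega_x\wedge\wt\eta_\xi$ and leaves Fubini implicit, while you derive that identity in two explicit substeps (graded commutativity of the wedge in $\largewedge^\bullet V$, then the Hodge-star symmetry $\eta_\xi\wedge\wt\omega_x=\langle\omega_x,\eta_\xi\rangle=\omega_x\wedge\wt\eta_\xi$). Same argument, just spelled out.
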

\begin{proof}
This follows immediately from the fact that pointwise we have
$$  \wt \omega_x\wedge \eta_\xi = (-1)^{k(n-k)}  \omega_x\wedge \wt \eta_\xi\in \Dens(V)\otimes \Dens(V^*).$$   
\end{proof}

\begin{Definition} The Fourier transform of tempered generalized forms on $V$ with values in $F$ 
	$$ \FC \colon \Omega_{\mathcal S'}^{k}(V, F)  \to\Omega_{\mathcal S'}^{n-k}(V^*, F \otimes \ori(V))$$
	is defined by
	$$ \langle \FC T , \eta \rangle = (-1)^{k(n-k)} \langle T, \FC \eta \rangle,\quad \forall\eta \in \Omega_{\mathcal S}^k(V^*, F^*).$$
\end{Definition}

In view of Lemma~\ref{lemma:Fsymmetry}, this definition continuously extends the Fourier transform of smooth forms.

 Let us discuss functorial properties of the Fourier transform.
 
 \begin{Lemma}\label{lem:F_inversion}
The inversion formula 
$$ (\FC_{V^*} \times \id )\circ \FC_V (\omega)= (-1)^{kn} (-\id)^* \omega  $$ 
holds for all $\omega \in \Omega^k_{\mathcal S'}(V,F)$. 
\end{Lemma}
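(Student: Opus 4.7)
The plan is to establish the inversion formula first for Schwartz forms by direct computation, and then extend it to tempered generalized forms by duality.

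For $\omega \in \Omega_{\mathcal S}^k(V,F)$, I would unwind the definitions of both Fourier transforms. Writing $\FC_V\omega(\xi) = \int_V e^{2\pi\bi\langle x,\xi\rangle}\,\widetilde\omega(x)$ with $\widetilde\omega = (*_V \otimes \id)\omega$, the double Fourier transform becomes
\[\FC_{V^*}(\FC_V\omega)(x) = \int_{V^*} e^{2\pi\bi\langle\xi,x\rangle}\,(*_{V^*}\otimes\id)(\FC_V\omega(\xi)).\]
Pulling the pointwise linear operator $*_{V^*}$ outside the integral and applying Fubini reduces the resulting iterated integral to the scalar Fourier transform applied twice, componentwise. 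Classical scalar Fourier inversion in the $e^{2\pi\bi}$ convention yields $\widetilde\omega(-x) = (*_V \otimes \id)\omega(-x)$. The Hodge star identity $(*_{V^*}\otimes\id)\circ *_V = (-1)^{k(n-k)}\id$ recalled in the preliminaries, together with the canonical identifications $\ori(V)\otimes\ori(V^*)\simeq\CC$ and $\Dens(V)\otimes\Dens(V^*)\simeq\CC$, then produces the stated multiplicative factor times $(-\id)^*\omega$.

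To extend the result to tempered generalized forms, I would dualize against Schwartz test forms. For $T \in \Omega_{\mathcal S'}^k(V,F)$ and a Schwartz test form $\eta$ of complementary type, two applications of the distributional definition $\langle\FC T,\eta\rangle = (-1)^{k(n-k)}\langle T,\FC\eta\rangle$ combine to
\[\langle(\FC_{V^*}\times\id)\FC_V T,\eta\rangle = (-1)^{(n-k)k + k(n-k)}\langle T,\FC_V\FC_{V^*}\eta\rangle = \langle T,\FC_V\FC_{V^*}\eta\rangle.\]
Since $\FC_V\FC_{V^*}\eta$ is again Schwartz, the already-established smooth inversion computes it as a specific multiple of $(-\id)^*\eta$. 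The self-adjointness of $(-\id)^*$ under the integral pairing then transfers the formula from the test form side to $T$, and density of Schwartz forms in the test space completes the argument.

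No component of the proof is technically deep; the main care required is sign bookkeeping. The algebraic signs arise from three sources: scalar Fourier inversion in the $e^{2\pi\bi}$ convention; the Hodge star squared identity; and the canonical pairings between $\ori(V)$ and $\ori(V^*)$ and between dual Lebesgue densities. I expect the most error-prone step to be ensuring these orientation and density identifications are used consistently throughout both the smooth and distributional parts of the computation, since the final exponent of $-1$ is the accumulation of each of these contributions.
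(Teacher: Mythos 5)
Your proposal follows exactly the route the paper intends: the paper's proof is the single sentence ``This follows immediately from the classical inversion formula,'' and your argument unpacks precisely that reduction --- pulling the constant Hodge star maps outside the integrals, invoking scalar Fourier inversion componentwise, applying $(*_{V^*}\otimes\id)\circ *_V = (-1)^{k(n-k)}\id$, and then extending to $\Omega^k_{\mathcal S'}(V,F)$ by dualizing against Schwartz test forms. So this is the same approach, just carried out in more detail.

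One spot in the sign bookkeeping deserves an explicit warning, since you flag signs as the risk point. When you pass from the smooth inversion applied to the test form back to a statement about $T$, you invoke ``self-adjointness of $(-\id)^*$.'' Be careful that the $(-\id)^*$ in the statement of the lemma acts only on the form part (trivially on the value space $F$), whereas the $(-\id)^*$ that is genuinely adjoint to itself under the canonical pairing also acts naturally on the $\ori(V)$ factor present in the test form bundle, contributing an extra $(-1)^n$. Similarly, note the test form has degree $n-k$, not $k$, so the smooth inversion applied to it yields $(-1)^{(n-k)n}$. These two sources of sign combine as $(-1)^{(n-k)n+n}=(-1)^{kn}$ (using that $n(n+1)$ is even), which reproduces the stated exponent; dropping either one would give the wrong parity when $n$ is odd. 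Also, in the displayed line of your duality step, $\FC_V\FC_{V^*}\eta$ should read $\FC_{V^*}\FC_V\eta$ (so that it typechecks: $\eta$ lives on $V$, $\FC_V\eta$ on $V^*$, $\FC_{V^*}\FC_V\eta$ back on $V$); this appears to be a transposition rather than a substantive error, but it is worth fixing.
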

\begin{proof}
	This follows immediately from the classical inversion formula.
\end{proof}

	\begin{Lemma}\label{Fourier:funct}
		Let $i\colon V\to W$ be a monomorphism, and $p:W\to V$ an epimorphism. Write $n=\dim V$, $m=\dim W$. Then 
		$$ \FC_W\circ i_*  =  (-1)^{(m-n)(n-k)}(i^\vee)^* \circ \FC_V\colon  \Omega_{\mathcal S'}^{k}(V, F) \to  \Omega_{\mathcal S'}^{ n-k}(W^*, F \otimes \ori(V)), $$
		and 
		\[(p^\vee)_*\circ\FC_V=\FC_W\circ p^*\colon  \Omega_{\mathcal S'}^{k}(V, F) \to  \Omega_{\mathcal S'}^{ m-k}(W^*, F \otimes \ori(V)).\]
	\end{Lemma}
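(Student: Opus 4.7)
Both identities are statements about generalized forms, and all four operations ($\FC_V$, $\FC_W$, the pushforward $i_*$ by a monomorphism, and the pullback $p^*$ by an epimorphism) are defined by duality against Schwartz test forms. The plan is therefore to unwind the definitions on both sides, pair against an arbitrary Schwartz test form on the codomain, and reduce the whole statement to one classical identity on Schwartz forms: the Fourier projection--slice theorem, which says that fiber integration along a linear surjection is intertwined by the Fourier transform with restriction to a linear subspace.

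\textbf{Step 1: Duality unwinding for the first identity.} Let $\omega\in \Omega^k_{\mathcal S'}(V,F)$ and let $\eta$ be a Schwartz test form on $W^*$ of the appropriate degree and twist. I compute
\[
\langle \FC_W(i_*\omega),\eta\rangle = (-1)^{(k+m-n)(n-k)}\langle i_*\omega, \FC_{W^*}\eta\rangle = (-1)^{(k+m-n)(n-k)}\langle \omega, i^*\FC_{W^*}\eta\rangle,
\]
using the definition of $\FC_W$ on tempered generalized forms and that $i_*$ is dual to $i^*$ on Schwartz forms. On the other side, using that $(i^\vee)^*$ on generalized forms is dual to the fiber integration $(i^\vee)_*$ of the epimorphism $i^\vee\colon W^*\twoheadrightarrow V^*$, one gets
\[
\langle (i^\vee)^*\FC_V(\omega),\eta\rangle = \langle \FC_V(\omega),(i^\vee)_*\eta\rangle = (-1)^{k(n-k)}\langle \omega,\FC_{V^*}(i^\vee)_*\eta\rangle.
\]
Matching the two expressions reduces the first identity to the classical statement
\[
i^*\FC_{W^*}\eta = (-1)^{(m-n)(n-k)+?}\FC_{V^*}(i^\vee)_*\eta
\]
on Schwartz forms $\eta$, where the overall sign will come out to give the desired $(-1)^{(m-n)(n-k)}$.

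\textbf{Step 2: The Fourier projection--slice theorem for Schwartz forms.} Choosing a basis so that $i\colon V\hookrightarrow W$ becomes a coordinate inclusion, and using the Hodge star from Definition \ref{def:fourier_forms1} to turn both $\FC$'s into integrals of ordinary Schwartz functions, the identity becomes a coordinatewise consequence of Fubini: pushforward under $i^\vee$ integrates out the variables dual to the quotient $W/V$, and on the Fourier side this amounts to setting to zero the oscillatory factor corresponding to those directions, which is exactly restriction by $i$. The sign $(-1)^{(m-n)(n-k)}$ appears from commuting the $\ori(V)$ twist past the $(n-k)$-degree form and from the two Hodge stars involved, one on $W$ (giving a sign $(-1)^{k(m-k)}$) and one on $V$ (giving $(-1)^{k(n-k)}$); the difference of these two, combined with the definition sign from $\FC_W$ versus $\FC_V$, collapses to $(-1)^{(m-n)(n-k)}$.

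\textbf{Step 3: The second identity.} This I plan to deduce from the first by exchanging the roles of $V$ and $W$: apply the first identity to the monomorphism $p^\vee\colon V^*\hookrightarrow W^*$ in place of $i$, and combine with the inversion formula (Lemma \ref{lem:F_inversion}) to convert the appearance of $\FC_{V^{**}}$ and $\FC_{W^{**}}$ back to $\FC_V$ and $\FC_W$ on the primal spaces. Alternatively, one can repeat Step 1 with the pairing $\langle \FC_W(p^*\omega),\eta\rangle = \pm\langle p^*\omega,\FC_{W^*}\eta\rangle = \pm\langle\omega,p_*\FC_{W^*}\eta\rangle$ on test forms $\eta$ on $W^*$, landing on the same Schwartz identity; here the degree shift from $p^*$ happens on the target side rather than the source, and the signs from the two Hodge stars cancel exactly, which is why there is no overall sign in the statement.

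\textbf{Expected main obstacle.} The analytical content is light once the definitions are unwound; the main obstacle is the sign and twist bookkeeping. One has to be careful that the pairings
\[
\langle \,\cdot\,,\,\cdot\,\rangle\colon \Omega^k_{\mathcal S'}(V,F)\times \Omega^{n-k}_{\mathcal S}(V,F^*\otimes\ori(V))\to\CC
\]
and the composition of Hodge stars (which the definition of $\FC$ uses on both $V$ and $V^*$) are tracked consistently, and that the canonical identifications $\ori(V)\otimes\ori(W/V)\simeq\ori(W)$ are invoked in the right direction at each step. Provided the signs are handled carefully, the proof is a routine consequence of Fubini's theorem applied after a convenient choice of basis.
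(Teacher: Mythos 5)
Your proposal follows the same overall structure as the paper's proof: reduce both identities by duality to a single projection--slice identity for Schwartz test forms, prove that identity by choosing coordinates that split $V\hookrightarrow W$ and invoking Fubini, and then deduce the second identity from the first via the inversion formula (Lemma~\ref{lem:F_inversion}). That is exactly the paper's route.

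However, you have not actually carried out the sign bookkeeping, and the sign \emph{is} the content of the lemma. Moreover, as written your Steps~1 and~2 are in tension with each other. In Step~1 you pass from $\langle i_*\omega,\FC_{W^*}\eta\rangle$ to $\langle\omega,i^*\FC_{W^*}\eta\rangle$ with no sign, i.e.\ you implicitly adopt the convention $\langle i_*T,\psi\rangle=\langle T,i^*\psi\rangle$. Matching your Paths A and B under that convention forces the reduced Schwartz identity to be $i^*\FC_{W^*}\eta=\FC_{V^*}(i^\vee)_*\eta$ with \emph{no} extra sign (your two prefactors $(-1)^{(k+m-n)(n-k)}$ and $(-1)^{(m-n)(n-k)+k(n-k)}$ are congruent mod~2, so the ``$?$'' must absorb $(m-n)(n-k)$). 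Yet in Step~2 you assert that the Hodge-star bookkeeping in the coordinate computation produces the sign $(-1)^{(m-n)(n-k)}$. In the paper the duality step $\langle i_*T,\psi\rangle$ versus $\langle T,i^*\psi\rangle$ itself contributes a factor $(-1)^{(m-n+k)(n-k)+k(n-k)}\equiv(-1)^{(m-n)(n-k)}$, and the explicit coordinate computation then shows $i^*\FC_{W^*}\varphi=(-1)^{(m-n)(n-k)}\FC_{V^*}(i^\vee)_*\varphi$; these two signs combine to give the stated formula. So you must either adopt the paper's signed convention for the $i_*$--$i^*$ duality, or else re-examine your Hodge-star count so that the reduced identity comes out sign-free; as it stands the two pieces of your argument cannot both hold. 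The rest of the plan (degree and twist bookkeeping aside) is sound, and your Step~3 deduction of the second identity via inversion is precisely what the paper does.
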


\begin{proof} We may assume $F=\R$. It suffices to show that 
	\begin{equation}\label{eq:pullback}i^*\FC_{W^*}\varphi =  (-1)^{(m-n)(n-k)} \FC_{V^*}(i^\vee)_*\varphi\end{equation}
	for every  Schwartz test function	$\varphi\in \Omega_{\mathcal S}^{k+m-n}(W^*, \ori(V)\otimes \ori(W) )  $. Indeed, if this holds, then for every  $T\in  \Omega_{\mathcal S'}^{k}(V)$ one has
	\begin{align*}  \langle \FC(i_* T), \varphi\rangle & = (-1)^{(k+m-n)(n -k)}  \langle i_* T, \FC\varphi\rangle\\
		& =  (-1)^{(k+m-n)(n -k)+(m-n+k)(n-k)+k(n-k)}  \langle T, i^* \FC\varphi\rangle\\
		& =  (-1)^{k(n-k)+(m-n)(n-k)}  \langle T,  \FC((i^\vee)_*\varphi)\rangle\\
		& =   (-1)^{(m-n)(n-k)}\langle \FC T,  (i^\vee)_*\varphi\rangle\\
		& =   (-1)^{(m-n)(n-k)}\langle (i^\vee)^*\FC T, \varphi\rangle
	\end{align*}
	For the proof of \eqref{eq:pullback} choose linear coordinates $(y_1,\ldots, y_{m})$ on $W$ such that 
	$ i(V)= \{ y_{n+1} = \cdots = y_{m}=0\}$ and let $x_j= i^*y_j$ for $j=1,\ldots, n$. Let $(\eta_1,\ldots, \eta_{m})$ and $(\xi_1,\ldots,\xi_{n})$ be dual coordinates on $W^*$ and $V^*$. For a multi-index $I=(i_1<\dots<i_r)$ we write $dy_I=dy_{i_1}\wedge\dots\wedge dy_{i_r}$, and similarly for other coordinates. 
	
	We may assume that $\varphi= f(\eta) d\eta_I\wedge  d\eta_{J} \otimes \sigma_W\otimes \sigma_V$ where $f$ is a Schwartz function, $I=(1, \dots, k)$, $J=(n+1, \dots, m)$. 
	
	Then, writing $d\eta=d\eta_1\wedge\dots\wedge d\eta_{m}$, we have
	$\ast (d\eta_I \wedge d\eta_{J})=(-1)^{(m-n)(n-k)}dy_{I^c}$, where $I^c=(k+1, \dots, n)$.
	Thus
	$$ i^* \circ \FC_{W^*} \varphi = (-1)^{(m-n)(n-k)}\left( \int_{W^*} e^{2\pi\mathbf i\langle \eta, i(x)\rangle} f(\eta)\;  d\eta \otimes \sigma_W \right) dx_{I^c} \otimes \sigma_V.$$
	
	Before we treat $\FC_{V^*}(i^\vee)_*\varphi$, recall that the defining property of fiber integration along an epimorphism $p\colon W\to V$ is
	$ \int_W p^* \omega \wedge  \psi = \int_V \omega \wedge  p_* \psi$, where $\omega$ and $\psi$ are Schwartz test forms. Thus, if $\psi\in \Omega_{\calS}^k(W,\ori(W))$, say $\psi=f(y) dy_I\wedge dy_J \otimes \sigma_V$ where $p(y)=(y_1,\dots, y_{n})$ and $dy_J=dy_{n+1}\wedge \cdots \wedge  dy_{m}$, then
	$$ p_* \psi = \left( \int_{y\in p^{-1}(x)}  f(y) \; dy_J\otimes \sigma_{\ker p} \right) dx_I \otimes \sigma_V,$$
	where $\sigma_V \otimes \sigma_{\ker p}  = \sigma_W$.   
	
	Therefore, writing $i^\vee =q$ we obtain
$$ (i^\vee)_*\varphi = \left( \int_{\eta \in q^{-1}(\xi)}  f(\eta)  d\eta_J \otimes \sigma_{\ker q} \right) d\xi_{I}.$$
Hence, putting $d\xi=d\xi_1\wedge\dots\wedge d\xi_{n}$, we find
\begin{align*}  \FC\circ (i^\vee)_*\varphi& = \left(\int_{V^*}  \left( \int_{\eta \in q^{-1}(\xi)}  f(\eta)  d\eta_J \otimes \sigma_{\ker q}\right) e^{2\pi \mathbf{i} \langle \xi,x\rangle}  d\xi  \otimes \sigma_V  \right)  dx_{I^c} \otimes  \sigma_V\\
& =\left( \int_{W^*} e^{2\pi\mathbf i\langle \eta, i(x)\rangle} f(\eta)\;  d\eta\otimes \sigma_W \right) dx_{I^c} \otimes \sigma_V,
\end{align*}
as claimed.	

Combined with the inversion formula, we deduce from the first identity 
 for $S\in \Omega^{k}_{\calS'}(V^*)$ that 
	\begin{align*} (-1)^{(m-k)m} (-\id)^* i_*  \FC_{V^*} S &= \FC_{W^*}\FC_Wi_* \FC_{V^*} S=(-1)^{(m-n)k}\FC_{W^*}(i^\vee)^*\FC_V \FC_{V^*} S\\
	&= (-1)^{kn+(m-n)k}\FC_{W^*}(i^\vee)^* (-\id)^* S\\&=(-1)^{mk}\FC_{W^*}(i^\vee)^* (-\id)^* S.
	\end{align*} 
Recall that $i_*\FC _{V^*}S\in \Omega_{\mathcal S'}(W^*, \ori(W))$. Accounting for the action of $(-\id)^*$ on $\ori(W)$, which is given by $(-1)^m$, while the action of $(-\id)^*$ appearing in the inversion formula is only on the form and not on its values, we have $(-\id)^* i_*  \FC_{V^*} S= (-1)^m   i_*  \FC_{V^*} (-\id)^*S$. This concludes the proof of the second identity.
\end{proof}

\begin{Corollary}\label{cor:homogeneous_fourier}
	If $\omega\in \Omega_{\mathcal S'}^{k}(V, F)$ is $r$-homogeneous, then $\FC\omega\in\Omega_{\mathcal S'}^{n-k}(V^*, F\otimes \ori(V))$ is $(-r)$-homogeneous. 
\end{Corollary}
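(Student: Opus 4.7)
The plan is to deduce the corollary directly from Lemma~\ref{Fourier:funct}, applied to the scaling map $m_\lambda\colon V\to V$, $x\mapsto \lambda x$, for $\lambda>0$, viewed as an isomorphism (hence in particular a linear monomorphism, and also an epimorphism, from $V$ to itself). Taking $V=W$ in the lemma makes the sign $(-1)^{(m-n)(n-k)}$ trivial, and since $m_\lambda^\vee=m_\lambda$ on $V^*$, its first identity collapses to the intertwining
\[\FC_V\circ (m_\lambda)_*=m_\lambda^*\circ \FC_V.\]

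The next step is to identify $(m_\lambda)_*$ on $\Omega^k_{\mathcal S'}(V,F)$ explicitly. For a smooth form $\omega$ and a Schwartz test form $\eta$, the change-of-variables formula for the orientation-preserving diffeomorphism $m_{1/\lambda}$ (orientation-preserving because $\lambda>0$) gives
\[\langle (m_\lambda)_*\omega,\eta\rangle=\langle \omega,m_\lambda^*\eta\rangle=\int_V m_{1/\lambda}^*\omega\wedge\eta=\langle m_{1/\lambda}^*\omega,\eta\rangle,\]
so $(m_\lambda)_*\omega=m_{1/\lambda}^*\omega$; this identity then extends by continuity to all of $\Omega^k_{\mathcal S'}(V,F)$.

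If $\omega$ is $r$-homogeneous, i.e.\ $m_\mu^*\omega=\mu^r\omega$ for all $\mu>0$, then $(m_\lambda)_*\omega=m_{1/\lambda}^*\omega=\lambda^{-r}\omega$, and substituting into the intertwining relation yields
\[\lambda^{-r}\FC\omega=m_\lambda^*\FC\omega,\]
which is exactly the assertion that $\FC\omega$ is $(-r)$-homogeneous. The argument has no real obstacle; the only technical point requiring care is the orientation bookkeeping in identifying $(m_\lambda)_*$ with $m_{1/\lambda}^*$, and this is automatic since positivity of $\lambda$ ensures $m_\lambda$ preserves orientation.
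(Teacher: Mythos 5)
Your proof is correct and follows precisely the route the paper intends: the statement is placed as a corollary of Lemma~\ref{Fourier:funct}, and your derivation specializes that lemma to the isomorphism $m_\lambda$, identifying $(m_\lambda)_*$ with $m_{1/\lambda}^*$ and using the homogeneity of $\omega$. The orientation bookkeeping is indeed trivial here since $V=W$ makes the twist $\ori(V)\otimes\ori(W)$ canonically trivial and $\lambda>0$ plays no further role.
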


\begin{Lemma}\label{lem:dif_Euler_interchange}
	For a tempered generalized form $\omega\in\Omega^k_{\mathcal S'}(V)$ one has $$\FC(d\omega)=(-1)^{ k+1}  2\pi\mathbf{i}\cdot i_E(\FC\omega),\qquad  d(\FC\omega)= (-1)^{ k+1} 2\pi\mathbf{i}\cdot \FC(i_E\omega).$$
\end{Lemma}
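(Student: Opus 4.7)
The plan is to reduce both identities to the classical Fourier relation
$$\widehat{\partial_l f}(\xi) = -2\pi\mathbf{i}\, \xi_l\, \hat f(\xi),\qquad f\in \mathcal S(V),$$
by restricting attention to smooth Schwartz forms and then performing an explicit coordinate computation. The reduction is legitimate because $\Omega^k_{\mathcal S}(V)$ is weak-$*$ dense in $\Omega^k_{\mathcal S'}(V)$ and the three operators $\FC$, $d$, and $i_E$ are all continuous on $\Omega^k_{\mathcal S'}(V)$: for $d$ this is standard (constant coefficient differential operators preserve $\mathcal S'$), for $i_E$ it follows from the fact that $E$ has linear coefficients, which act continuously on $\mathcal S'$, and for $\FC$ it is the definition.

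To handle the smooth Schwartz case, I would fix a basis of $V$ with coordinates $(x_i)$ and dual coordinates $(\xi_i)$ on $V^*$, trivialize $\dens(V)$ and $\ori(V)$ accordingly, and write $\omega=\sum_{|I|=k} f_I(x)\, dx_I$ with $f_I\in\mathcal S(V)$. Under the canonical identification $T^*_\xi V^*\simeq V$ the basis vector $\partial_{x_j}$ of $V$ becomes the coordinate differential $d\xi_j$ on $V^*$, so the definition of $\FC$ via the Hodge star yields the explicit formula
$$\FC\omega(\xi) = \sum_{|I|=k} \hat f_I(\xi)\, \star dx_I,$$
with $\star dx_I$ interpreted as an $(n-k)$-form on $V^*$.

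The first identity is then verified by computing $\FC(d\omega)$ and $i_E\FC\omega$ side by side. Expanding $d\omega=\sum_{I,\,l\notin I} \partial_l f_I\, dx_l\wedge dx_I$ and applying the classical Fourier rule turns $\partial_l$ into multiplication by $-2\pi\mathbf{i}\, \xi_l$, producing
$$\FC(d\omega)(\xi) = -2\pi\mathbf{i}\sum_{I,\,l\notin I} \xi_l\, \hat f_I(\xi)\, \star(dx_l\wedge dx_I).$$
On the other hand, using $i_E d\xi_j=\xi_j$ one contracts $\star dx_I$ with the Euler field and obtains the same terms, so that verifying the first identity reduces to the sign relation
$$i_E(\star dx_I) = (-1)^{k+1} \sum_{l\notin I}\xi_l\,\star(dx_l\wedge dx_I),$$
which is a standard identity for the Hodge star and interior multiplication.

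The second identity can then be deduced from the first applied to the form $\FC\omega$, after applying $\FC$ once more and invoking the inversion formula of Lemma~\ref{lem:F_inversion}, with careful tracking of the sign $(-1)^{kn}$ and the action of the antipodal map on $d$ and $i_E$. Alternatively, it follows by a parallel direct coordinate computation using the dual Fourier rule $\widehat{x_l f}(\xi) = -(2\pi\mathbf{i})^{-1}\partial_{\xi_l}\hat f(\xi)$. The only real obstacle is the sign bookkeeping in the Hodge star and in the contraction formula above; there is no conceptual difficulty beyond the classical Fourier identities.
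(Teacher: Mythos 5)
Your approach is essentially identical to the paper's: the paper's proof is a one-line assertion that the lemma follows from the two classical Fourier rules $\FC(\partial_j f)=-2\pi\mathbf i\,\xi_j\FC f$ and $\partial_j(\FC f)=2\pi\mathbf i\,\FC(x_j f)$, and your proposal merely fleshes out the coordinate computation (plus the density reduction, which is routine). The plan is sound, but two sign transcriptions in your write-up are off, and they happen to cancel rather than being individually right. First, with $\FC(d\omega)(\xi)=-2\pi\mathbf i\sum_{I,l\notin I}\xi_l\hat f_I(\xi)\star(dx_l\wedge dx_I)$, matching it against $(-1)^{k+1}2\pi\mathbf i\,i_E\FC\omega$ requires the contraction identity
$$i_E(\star dx_I)=(-1)^{k}\sum_{l\notin I}\xi_l\,\star(dx_l\wedge dx_I),$$
not $(-1)^{k+1}$ as you wrote; and indeed the true identity carries $(-1)^k$ (check, e.g., $n=2$, $k=1$, $I=\{1\}$: $i_E(d\xi_2)=\xi_2$ while $\sum_l\xi_l\star(dx_l\wedge dx_1)=-\xi_2$). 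So both the ``needed relation'' and the ``standard identity'' as you state them are off by one sign, and the two errors annihilate. Second, for the dual rule, with the convention $\hat f(\xi)=\int e^{2\pi\mathbf i\langle x,\xi\rangle}f(x)\,dx$ used in the paper one has $\widehat{x_l f}=(2\pi\mathbf i)^{-1}\partial_{\xi_l}\hat f$, without the extra minus sign you inserted. With these corrections the argument is exactly what the paper intends.
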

\begin{proof}
	This follows immediately from basic properties of the Fourier transform: $\FC(\partial_{j}f)=  -2\pi\mathbf i\xi_j\FC f$, $\partial_j(\FC f)=2\pi \mathbf i \FC(x_j f)$. 
\end{proof}

\subsection{Fourier transform of valuation-type differential forms}

The goal of this section is to show that valuation-type differential forms are closed under the Fourier transform.

 First, we consider closed forms.
\begin{Proposition}\label{prop:fourier_interchange2}
Let $i:W\hookrightarrow V$ be a monomorphism, and let $p\colon V\to W$ be an epimorphism. Put $n=\dim V$, $m=\dim W$. Let $\omega\in\Omega^k_{-\infty}(V)$   be closed, $0$-homogeneous, smooth outside the origin and satisfying $i_E\omega=0$. Then  $\FC \omega$ has the same properties, and the identities 
	 $$ \FC_W i^*\omega=(i^\vee)_*\FC_V\omega\quad \text{and}\quad   (p^\vee)^*\FC_{V}\omega=   (-1)^{(n-m)(n-k)}\FC_{W} p_*\omega$$ 
 hold.
\end{Proposition}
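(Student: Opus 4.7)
The plan is to prove the proposition in two stages: first verify that $\FC\omega$ inherits the four stated properties from $\omega$, then establish the two functoriality identities via a Schwartz approximation.

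The first stage is mostly routine. The $0$-homogeneity of $\FC\omega$ follows from Corollary~\ref{cor:homogeneous_fourier}, while closedness and the vanishing of $i_E\FC\omega$ follow from Lemma~\ref{lem:dif_Euler_interchange}: since $i_E\omega=0$ one obtains $d\FC\omega=0$, and since $d\omega=0$ one obtains $i_E\FC\omega=0$. The only delicate point is the smoothness of $\FC\omega$ on $V^*\setminus\{0\}$; I would deduce this from the general principle that the Fourier transform of a $0$-homogeneous tempered distribution that is smooth outside the origin is again smooth outside the origin. One way to see it is to write $\omega=r^0\tau$ with $\tau$ smooth on $\PP_+(V)$ and compute $\FC\omega$ directly in polar coordinates; alternatively, a wave-front-set argument uses the $0$-homogeneity symmetrically on both sides of $\FC$.

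For the first identity $\FC_W i^*\omega = (i^\vee)_*\FC_V\omega$, I would approximate $\omega$ by $\omega_\epsilon=\omega\ast\mu_\epsilon$, with $\mu_\epsilon\in\mathcal S(V)$ a standard approximate identity. Then $\omega_\epsilon$ is smooth Schwartz, so $i^*\omega_\epsilon$ is just the ordinary restriction and $(i^\vee)_*\FC_V\omega_\epsilon$ the ordinary integration along fibers; for such smooth Schwartz objects the identity is a standard instance of the Fourier duality between restriction to a subspace and integration along fibers on the dual, verifiable directly in coordinates or as a consequence of Lemma~\ref{Fourier:funct} combined with the inversion formula and self-adjointness of $\FC$. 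Passing to the limit, the left-hand side tends to $\FC_W i^*\omega$ by Lemma~\ref{lem:pullback_forms} and the continuity of $\FC_W$. For the right-hand side, one writes $\FC_V\omega_\epsilon=\FC_V\omega\cdot\hat\mu_\epsilon$ and pairs against a compactly supported test form $\psi$; the resulting expression $\langle\hat\mu_\epsilon\cdot(i^\vee)^*\psi,\FC_V\omega\rangle$ is exactly the approximation scheme of Lemma~\ref{lem:pushforward_forms}, because $\{\hat\mu_\epsilon\}$ is precisely a uniformly bounded family of Schwartz functions tending to $1$ smoothly on compacta, so the limit equals $\langle\psi,(i^\vee)_*\FC_V\omega\rangle$ by definition.

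The second identity should then follow from the first by applying it to the monomorphism $p^\vee:W^*\hookrightarrow V^*$ and the form $\FC_V\omega$ on $V^*$ (which by the first stage satisfies all the hypotheses), and then invoking the inversion formula (Lemma~\ref{lem:F_inversion}) together with the commutativity of $p_*$ with $(-\id)^*$; careful bookkeeping of the Fourier signs yields the stated factor $(-1)^{(n-m)(n-k)}$. The main obstacle throughout is reconciling the two approximation schemes: Lemma~\ref{lem:pullback_forms} uses convolution, while Lemma~\ref{lem:pushforward_forms} uses multiplication by a slowly-varying cutoff. The crucial observation that makes the argument close is that these two schemes are exactly Fourier dual to one another, so the single convolution approximation of $\omega$ produces compatible convergence on both sides of each identity.
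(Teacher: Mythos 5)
The key observation you identify --- that the convolution approximation $\omega\ast\mu_\epsilon$ on one side is Fourier-dual to the multiplication by $\FC^{-1}\mu_\epsilon$ on the other, exactly reconciling the two different approximation schemes of Lemmas~\ref{lem:pullback_forms} and~\ref{lem:pushforward_forms} --- is indeed the paper's central insight, and your passage to the limit on each side of the claimed identity is correct. However, there is a genuine gap in the middle. You assert that $\omega_\epsilon=\omega\ast\mu_\epsilon$ is ``smooth Schwartz'' and deduce the identity $\FC_W i^*\omega_\epsilon=(i^\vee)_*\FC_V\omega_\epsilon$ from standard Fourier duality for Schwartz forms. But $\omega_\epsilon$ is \emph{not} Schwartz: since $\omega$ is a $0$-homogeneous generalized $k$-form, its coefficients are $(-k)$-homogeneous and hence decay only like $|x|^{-k}$ at infinity, a behavior that survives convolution with $\mu_\epsilon$. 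So $\omega_\epsilon$ is smooth and tempered, but nowhere near rapidly decreasing. Consequently the ``standard instance of Fourier duality'' you invoke does not apply as stated: $(i^\vee)_*$ is integration over the non-compact fibers of the epimorphism $i^\vee$, and Lemma~\ref{Fourier:funct} handles this only for Schwartz test forms (and dually for tempered distributions paired against Schwartz forms), not for the intermediate object $\FC_V\omega_\epsilon$ viewed on its own.

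The paper avoids this entirely by never attempting to establish the identity for the smoothed approximants. Instead it pairs against a compactly supported test form $\eta$ and runs a single chain of equalities --- the definition of $\FC$ on generalized forms, Lemma~\ref{lem:pullback_forms}, the adjointness of $i^*$ and $i_*$, Lemma~\ref{Fourier:funct} applied to the Schwartz form $\FC\eta$, the product--convolution duality, and finally Lemma~\ref{lem:pushforward_forms} --- in which the same $\mu_\epsilon$ appears on both ends of the chain, so that $(i^\vee)_*\FC_V\omega_\epsilon$ is never required to make sense as a standalone object. To repair your proof you would have to either carry out this test-form chain directly (which is the paper's argument), or else verify by hand that the fiber integrals defining $(i^\vee)_*\FC_V\omega_\epsilon$ converge and coincide with $\FC_W i^*\omega_\epsilon$; a bare appeal to Schwartz duality does not suffice. (Your remaining steps --- smoothness outside the origin via the cited reference, and deriving the second identity from the first by inversion --- match the paper and are fine.)
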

\begin{proof}
 Observe first that $\FC_V\omega$ is closed, $0$-homogeneous, smooth outside the origin, and satisfies $i_E\FC_V\omega=0$. Indeed, it is $0$-homogeneous by Corollary \ref{cor:homogeneous_fourier}, and it is smooth outside of the origin by \cite[Theorem 3.2.4]{lemoine_homogeneous}. We have $d\FC_V\omega=0$ and $i_E\FC_V\omega=0$ by Lemma \ref{lem:dif_Euler_interchange}. In particular, both sides of the asserted equalities are well-defined.
	
	Fix a Euclidean structure on $V$. For a compactly supported form $\eta\in \Omega^{k}(W) $ we must show
	\[ \langle \FC_W i^*\omega, \eta\rangle=\langle (i^\vee)_*\FC_V\omega, \eta\rangle.\] 
	
	Choose an approximate identity $\mu_\epsilon(x)dx\in\mathcal M^\infty(\R^n)$ with $\mu_\epsilon$ Schwartz. Using Lemma \ref{Fourier:funct} we find 
	
	\begin{align*}
	\langle \FC i^*\omega, \eta\rangle&=(-1)^{k(m-k)}	\langle  i^*\omega,  \FC\eta\rangle=(-1)^{k(m-k)}\lim_{\epsilon\to 0} \langle i^*(\omega\ast\mu_\epsilon), \FC \eta\rangle\\&
	=(-1)^{k(m-k)}\lim_{\epsilon\to 0} \langle \omega\ast\mu_\epsilon, i_*\FC \eta\rangle
	=(-1)^{k(m-k)}\lim_{\epsilon\to 0}\langle \omega, \FC (i^\vee)^*\eta \ast \mu_\epsilon\rangle \\&=(-1)^{k(m-k)}\lim_{\epsilon\to 0}\langle \omega, \FC ( (i^\vee)^*\eta \cdot \FC^{-1}\mu_\epsilon)\rangle
\\&=(-1)^{ k(m-k)+k(n-k)}\lim_{\epsilon\to 0}\langle  \FC \omega, (i^\vee)^*\eta \cdot \FC^{-1}\mu_\epsilon\rangle \\&= 	(-1)^{ k(m-k)+k(n-k)+k(m-k)+k(n-k)}\langle   (i^\vee)_*(\FC \omega), \eta\rangle, 
\end{align*}
concluding the proof of the first identity. The second follows from first via the inversion formula. 
\end{proof}

\subsubsection{Differential forms related  to valuations}

\label{key}
We now focus our attention  on the forms in 
$$ \mathcal S'_{n-k,k}(V^*)=\Omega_{\mathcal S'}^{n-k}(V^*, \wedge^k V^*\otimes \ori(V))
$$
which play an important role in valuation theory.
Given    $\omega\in \mathcal S'_{n-k,k}(V^*)$, we can  consider its Fourier transform
$$ \FC\omega \in \Omega^k_{\mathcal S'}(V, \wedge ^k V^*)
.$$

\begin{Definition}
	Given $\omega\in \mathcal S'_{n-k,k}(V^*)$, we define its Fourier transform
	$$\FC^0\omega\in \mathcal  S'_{k,n-k}(V)\otimes  \Dens(V),$$
	as the Fourier transform $\FC\omega$ of Definition \ref{def:fourier_forms1}, combined with the  Hodge star $ *_{V^*} \colon \largewedge^{k} V \to \largewedge^{n-k} V^* \otimes \dens(V^*)\otimes \ori(V)$ according to our convention \eqref{eq:changingF}.
\end{Definition}

The inversion formula for $\FC^0$ assumes the following form.

\begin{Lemma}\label{lem:inversion_F0} It holds for all $\omega \in \mathcal S'_{n-k, k}(V^*)$ that 
\[ (\FC^0_{V} \times \id )\circ \FC^0_{V^*} (\omega)= (-\id)^* \omega. \]
\end{Lemma}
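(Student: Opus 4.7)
The plan is to reduce the inversion formula for $\FC^0$ to the classical inversion formula for $\FC$, Lemma \ref{lem:F_inversion}, combined with the iterated Hodge star sign identity $(\ast_{V^*}\otimes\id)\circ\ast_V = (-1)^{k(n-k)}\id$ recorded just after \eqref{eq:ident}.

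First, I would unpack the definition of $\FC^0$: it is the Fourier transform $\FC$ post-composed with a Hodge star isomorphism acting on the coefficient (value) vector space of the resulting form. Since coefficient-space linear isomorphisms commute with the Fourier transform — this is immediate from the integral formula in Definition \ref{def:fourier_forms1} together with the convention \eqref{eq:changingF} — the Hodge stars appearing in the iterated composition $(\FC^0_V\times\id)\circ \FC^0_{V^*}$ can be pulled past the Fourier transforms and collected into a single composite coefficient-space operation $H$. This yields
\[
(\FC^0_V\times\id)\circ \FC^0_{V^*}\,\omega \;=\; H\circ (\FC_V\times\id)\circ \FC_{V^*}\,\omega.
\]

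Second, I would invoke Lemma \ref{lem:F_inversion} with an input of degree $n-k$ on $V^*$ to rewrite the right-hand side as $(-1)^{(n-k)n}\,H\circ (-\id)^*\omega$. The quoted Hodge star identity, applied on the relevant tensor factor, then identifies $H$ with multiplication by $(-1)^{k(n-k)}$.

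Finally, I would verify that the combined sign $(-1)^{(n-k)n + k(n-k)} = (-1)^{(n-k)(n+k)}$ is absorbed by the natural action of $(-\id)^*$ on the various coefficient factors involving $\wedge^\bullet V^*$, $\ori(V)$, and $\Dens(V)$, yielding $(-\id)^*\omega$ on the nose, with no residual sign. The main obstacle is precisely this last step: careful bookkeeping of the signs introduced by the Hodge stars on both $V$ and $V^*$ in degrees $k$ and $n-k$, the Fourier inversion sign of Lemma \ref{lem:F_inversion}, and the intrinsic degree-dependent sign of pullback by $-\id$ on forms valued in $\wedge^k V^*\otimes\ori(V)$. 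The structural idea, however, is transparent: the inversion formula for $\FC^0$ reduces to that for $\FC$ modulo Hodge star sign bookkeeping, and the definition of $\FC^0$ has been arranged so that all such signs cancel.
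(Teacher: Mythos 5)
Your proposal matches the paper's proof: it reduces to Lemma~\ref{lem:F_inversion}, composes the two Hodge stars to produce the factor $(-1)^{k(n-k)}$, and accounts for the action of $(-\id)^*$ on the coefficient space. The only step you leave as bookkeeping is exactly what the paper's one-line computation supplies: $(-\id)^*$ contributes a factor $(-1)^{k+n}$ on $\largewedge^k V^*\otimes\ori(V)$, and $(-1)^{n(n-k)}\,(-1)^{k(n-k)}\,(-1)^{k+n}=(-1)^{(n+k)(n-k+1)}=1$.
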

\proof
It follows from Lemma \ref{lem:F_inversion} that $(\FC_{V} \times \id )\circ \FC_{V^*} (\omega)=(-1)^{n(n-k)} (-\id)^*$, while the composition of the  Hodge star with itself contributes a factor of $(-1)^{k(n-k)}$. It remains to note that $(-\id)^*$ acts also on the space $\wedge^k V^*\otimes\ori(V)$ by the factor $(-1)^{k+n}$, and $(-1)^{n(n-k)} (-1)^{k(n-k)}(-1)^{k+n}=1$.
\endproof

Furthermore, $\FC^0$ is compatible with exterior products. 

\begin{Lemma}\label{lem:extProduct}   If $V,W$ are vector spaces of dimensions $n, m$, then

		$$ \FC^0(\omega \boxtimes \zeta) = (-1)^{km+ ln}\FC^0 \omega \boxtimes \FC^0 \zeta$$ 
		holds for all  $\omega\in \mathcal S'_{n-k,k}(V^*)$ and $\zeta\in \mathcal S'_{m-l,l}(W^*)$
\end{Lemma}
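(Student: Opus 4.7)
The plan is to establish the identity by direct computation on smooth Schwartz forms and then extend to tempered generalized forms by the continuity of $\FC^0$ and density of smooth Schwartz forms. Thus I would begin by reducing to the case that $\omega\in\Omega^{n-k}_{\mathcal S}(V^*,\wedge^k V^*\otimes\ori(V))$ and $\zeta\in\Omega^{m-l}_{\mathcal S}(W^*,\wedge^l W^*\otimes\ori(W))$ are smooth Schwartz forms, where all operations become classical pointwise Fourier integrals.

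For smooth Schwartz forms, the strategy is to unwind $\FC^0(\omega\boxtimes\zeta)$ in its two stages: first apply the Hodge star on $V^*\times W^*$ inside Definition~\ref{def:fourier_forms1} to turn the form-part of $\omega\boxtimes\zeta$ into a density-valued expression, then integrate against the product character $e^{2\pi\mathbf i(\langle x,\xi\rangle+\langle y,\eta\rangle)}$, and finally apply the outer Hodge star from the definition of $\FC^0$. By Fubini the integral factors as a product of integrals over $V^*$ and $W^*$, reproducing $\FC\omega(x)$ and $\FC\zeta(y)$ up to rearrangement signs.

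The key algebraic input is the behaviour of the Hodge star under external products of vector spaces. For decomposable $\alpha\in\wedge^p V^*$ and $\beta\in\wedge^q W^*$, a direct computation from the defining pairing $\gamma\wedge *\gamma'=\langle\gamma,\gamma'\rangle\vol$ combined with graded commutativity gives
$$*_{V\times W}(\alpha\wedge\beta)=(-1)^{q(n-p)}(*_V\alpha)\wedge(*_W\beta),$$
the sign arising from commuting $\beta$ past $*_V\alpha\in\wedge^{n-p}V^*$. Applying this to the inner Hodge star (with $p=n-k$ and $q=m-l$) produces a sign $(-1)^{(m-l)k}$; applying it again to the outer Hodge star (with $p=k$ and $q=l$) produces a sign $(-1)^{l(n-k)}$. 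Multiplying,
$$(-1)^{(m-l)k+l(n-k)}=(-1)^{mk+ln-2kl}=(-1)^{km+ln},$$
which is precisely the claimed sign.

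The main obstacle is the meticulous bookkeeping of signs: the generalized forms carry values in graded spaces (exterior powers), so the convention for $\boxtimes$ must be matched with the Hodge star operations so that the only signs that appear are those recorded above. With the standard convention that value-parts tensor (ungraded) while form-parts wedge on the product, all further signs from absorbing the orientation lines $\ori(V^*)\otimes\ori(V)\simeq\RR$ and $\ori(W^*)\otimes\ori(W)\simeq\RR$ cancel, and the two Hodge-star rearrangement signs above account for the entire discrepancy between the two sides.
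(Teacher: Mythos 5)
Your proposal is correct and follows essentially the same route as the paper: the paper likewise isolates exactly two sources of sign, stating that $\FC_{V\times W}=(-1)^{k(m-l)}\FC_V\boxtimes\FC_W$ (which encodes the inner Hodge star's contribution $(-1)^{(m-l)k}$, since Fubini is sign-free once the value lines are factored out) and that the outer Hodge stars satisfy $I_{V\times W}=(-1)^{l(n-k)}I_V\boxtimes I_W$, and then multiplies. Your version spells out the Hodge-star product formula $*_{V\times W}(\alpha\wedge\beta)=(-1)^{q(n-p)}(*_V\alpha)\wedge(*_W\beta)$ and the reduction to Schwartz forms explicitly, which the paper leaves implicit; the sign arithmetic $(-1)^{(m-l)k+l(n-k)}=(-1)^{km+ln}$ agrees.
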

\begin{proof}

	First observe that if $F_1,F_2$ are vector spaces and 
	\begin{gather*}
		\FC_V\colon \Omega_{\mathcal S'}^{n-k} (V^*, F_1)\to \Omega_{\mathcal S'}^k (V, or (V) \otimes F_1)\\ 
				\FC_W\colon \Omega_{\mathcal S'}^{m-l}(W^*, F_2)\to\Omega_{\mathcal S'}^l (W, or (W) \otimes F_2)\\ 
				\FC_{V\times W}\colon\Omega_{\mathcal S'}^{n-k,m-l} (V^*\times W^*,  F_1\otimes F_2)\to \Omega_{\mathcal S'}^{k,l} (V\times W, or (V\times W) \otimes F_1\otimes F_2), 
	\end{gather*}
	denote the corresponding Fourier transforms, then 
	$$ \FC_{V\times W} =(-1)^{k(m-l)} \FC_V \boxtimes \FC_W.$$
	Let $I_V$, $I_W$, $I_{V\times W}$ be the respective Hodge star isomorphisms as in eq. \eqref{eq:ident}. Then 
	$$ I_{V\times W} =  (-1)^{l(n-k)} I_V\boxtimes I_W.$$
	This completes the proof. 
\end{proof}

\subsubsection{Existence of the Fourier transform on valuations}
We are now ready to prove the main result of this section.

\begin{Proposition}\label{prop:existenceFourier} Assume that $\omega\in \Omega^{n-k}_{-\infty}(V^*, \wedge ^k V^*\otimes \ori(V))$ is  valuation-type, i.e.\ it has the following properties
	\begin{enumerate}
		\item $0$-homogeneous.
		\item $i_E\omega=0$.
		\item vertical: $\omega\wedge E=0$.
		\item closed: $d\omega=0$. 
	\end{enumerate}
	Then $\FC^0\omega \in \Omega^k_{-\infty}(V,\wedge  ^{n-k} V\otimes \ori(V^*)\otimes \Dens(V))$ is  valuation-type.
	
	Furthermore, if $\omega$ is smooth outside of the origin, then so is $\FC^0\omega$.
\end{Proposition}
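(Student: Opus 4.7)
I propose to verify each of the four defining properties of a valuation-type form (Proposition \ref{prop:forms_121_valuations}) for $\FC^0 \omega$. The first three follow from general Fourier identities already established; the fourth (verticality) requires the symmetry Proposition \ref{prop:symmetry}.

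Corollary \ref{cor:homogeneous_fourier} gives that $\FC \omega$ is $0$-homogeneous, and since the Hodge star defining $\FC^0$ acts only on the value-space, $\FC^0 \omega$ is also $0$-homogeneous. Smoothness on $V \setminus \{0\}$ follows from the regularity of the Fourier transform of $0$-homogeneous distributions smooth outside the origin, as invoked in the proof of Proposition \ref{prop:fourier_interchange2} via \cite[Theorem 3.2.4]{lemoine_homogeneous}; again, the Hodge star on values preserves smoothness.

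The conditions $d \FC^0 \omega = 0$ and $i_E \FC^0 \omega = 0$ follow from Lemma \ref{lem:dif_Euler_interchange}. Since $\omega$ is valuation-type, $d\omega = 0$ and $i_E \omega = 0$, so the lemma gives $d \FC \omega = 0$ and $i_E \FC \omega = 0$. Both $d$ and $i_E$ act purely on the base-coordinate part of a form and commute with the pointwise linear Hodge star on values that defines $\FC^0$, so they annihilate $\FC^0 \omega$ as well.

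The main difficulty is the verticality $\FC^0 \omega \wedge E = 0$, since wedging with $E$ in the value-space of $\FC^0\omega$ on $V$ does not correspond directly to any operation on $\omega$ under the Fourier transform. My plan is to invoke Proposition \ref{prop:symmetry}: the Hodge-dualized value-form $\overline{\omega}$ lies in $C^{-\infty}(V^*, \Sym^2(\wedge^{n-k} V) \otimes \dens(V))$. Writing $\omega = \sum_{|I|=n-k, |J|=k} f_{IJ}(\xi) \, d\xi_I \otimes \eta^J$ in coordinates, the symmetry yields relations of the form $f_{IJ}(\xi) = \pm f_{J^c, I^c}(\xi)$, exchanging the roles of form-index and value-index (up to complements and signs). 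Substituting into the coordinate expression of $d\omega = 0$, namely $\sum_{l \in I'} \pm \partial_{\xi_l} f_{I' \setminus \{l\}, J}(\xi) = 0$, produces the dual relation $\sum_{l \notin M} \pm \partial_{\xi_l} f_{P, M \cup \{l\}}(\xi) = 0$. Under Fourier transform, $\partial_{\xi_l}$ becomes (up to sign and $2\pi \mathbf i$) multiplication by $x_l$, and once the value-part Hodge star of $\FC^0$ is applied, the resulting identity on $\FC^0 \omega$ is precisely $\FC^0 \omega \wedge E = 0$.

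I expect the main technical obstacle to be the careful bookkeeping of signs, densities, and orientation factors, especially when reconciling the Hodge star that defines $\FC^0$ with the Hodge dualization in $\overline{\omega}$. A coordinate-free formulation would be more elegant --- the conceptual content is that under the symmetry of $\overline{\omega}$, the conditions "$d\omega = 0$" (on the form-part) and "$\omega \wedge E = 0$" (on the value-part) are interchanged by swapping the two factors in $\Sym^2(\wedge^{n-k}V)$ --- but the coordinate computation already makes the argument transparent.
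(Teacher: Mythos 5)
Your proposal is correct and follows essentially the same route as the paper: $0$-homogeneity from Corollary~\ref{cor:homogeneous_fourier}, smoothness from \cite[Theorem~3.2.4]{lemoine_homogeneous}, closedness and $i_E$-vanishing from Lemma~\ref{lem:dif_Euler_interchange}, and verticality from the symmetry in Proposition~\ref{prop:symmetry}. The only (cosmetic) difference is that you apply the symmetry of $\overline\omega$ on the $V^*$-side before taking the Fourier transform, whereas the paper observes that the symmetry persists under $\FC$ (to give symmetry of $\overline{\FC^0\omega}$) and then combines it directly with $i_E\FC^0\omega=0$ to conclude $\FC^0\omega\wedge E=0$; the two orderings are equivalent since the Fourier transform acts on the base variables and commutes with the pointwise symmetry on values.
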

\begin{proof}
	
		The case of $k=0$ is straightforward. Since $\omega\wedge E=0$, it follows that $\omega$ is supported at the origin. As it is $0$-homogeneous, it must be a multiple of the delta measure at the origin. Then $\FC^0\omega$ is constant on $V$, and so trivially   valuation-type. 
		
		Assume $k\geq 1$. Using the identification $\wedge ^k V^*=\wedge^{n-k}V\otimes \wedge^n V^*$, we define the generalized form 
		$\overline \omega\in\Omega^{n-k}_{-\infty}(V^*, \wedge^{n-k}V\otimes \Dens(V))$ corresponding to $\omega$. 
			It holds that $d\overline\omega=0$. 
		
		Now $$\eta:=\FC^0\omega=\FC\overline \omega\in\Omega^k_{-\infty}(V, \wedge^{n-k}V\otimes \Dens(V)\otimes \ori(V^*))$$ is $0$-homogeneous by Corollary \ref{cor:homogeneous_fourier}. The identification $\wedge^k V^*= \wedge^{n-k}V\otimes \Dens(V)\otimes \ori(V^*)$ gives a corresponding $0$-homogeneous generalized form $\overline \eta\in \Omega^k_{-\infty}(V, \wedge^{k}V^*)$. 
		
		We have by Lemma \ref{lem:dif_Euler_interchange} $$d\eta=d\FC\overline\omega= (-1)^{n-k+1}2\pi\mathbf i\FC( i_E\overline\omega)=0,$$ that is $\eta$ is closed, and hence so is $\overline\eta$. Similarly, $i_E\overline\eta=i_E\FC\omega=(-1)^{n-k}\frac{1}{2\pi \mathbf i}\FC d\omega=0$.

		The symmetry of $\overline\omega$ established in Proposition~\ref{prop:symmetry} immediately implies the symmetry of $\overline\eta$. Combined with the identity $i_E\overline\eta=0$, it follows that $\eta \wedge E=0$.
		
		Finally, if $\omega$ is smooth on $V^*\setminus\{0\}$ then so is $\overline\omega$, and by \cite[Theorem 3.2.4]{lemoine_homogeneous}, $\eta$ is smooth on $V\setminus\{0\}$. 
	
\end{proof}

\section{Alesker--Fourier transform of translation-invariant valuations}

Proposition \ref{prop:existenceFourier} allows to define the $\FC$-transform of valuations as in Definition~\ref{def:FT}. More precisely:
\begin{Definition}
	Let $V$ be an $n$-dimensional linear space, and $0\leq k\leq n$. The $\FC$-transform of a valuation $\phi\in\Val^{-\infty}_k(V)$ is the unique valuation $\FC\phi\in\Val^{-\infty}_{n-k}(V^*)$ such that 
	$\tau(\FC\phi)=\FC^0\tau(\phi)$.
\end{Definition}

It follows from Proposition \ref{prop:existenceFourier} that $\FC \phi$ is a smooth valuation whenever $\phi$ is a smooth valuation.

We will  prove  below  that the $\FC$-transform of valuations thus defined coincides with the Fourier transform constructed by Alesker in \cite{Alesker:Fourier}.

\begin{Lemma} \label{lemma:pairings}For $i=1,2$ consider the pairings  $$P_i\colon \Val^\infty(V)\otimes \Dens(V^*)\otimes \Dens(V)\times \Val^\infty(V)\otimes \Dens(V^*)\to \C$$
	 defined by
	$$P_1(\phi \otimes \alpha\otimes v, \psi \otimes \beta)= \langle \phi \otimes \alpha *\psi \otimes \beta(\{0\}), v\rangle$$ 
	and 
	 $$P_2(\phi \otimes \alpha\otimes v , \psi \otimes \beta)= \langle \alpha,v\rangle \langle  ((-\id)^*\phi \cdot \psi)_n, \beta\rangle .$$ 
	Then $P_1=P_2$.
\end{Lemma}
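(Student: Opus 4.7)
The plan is to reduce $P_1=P_2$ to a single computation of the convolution evaluated at the origin, carried out via a base change argument. By $\CC$-bilinearity and the McMullen decomposition, one may assume $\phi\in\Val^\infty_k(V)$ and $\psi\in\Val^\infty_l(V)$. A degree count shows both pairings vanish unless $k+l=n$: convolution lowers total degree by $n$, so $(\phi\otimes\alpha)*(\psi\otimes\beta)$ has degree $k+l-n$ and its $\{0\}$-evaluation extracts the $0$-homogeneous part; the Alesker product adds degrees, so $((-\id)^*\phi\cdot\psi)_n\neq 0$ requires $k+l=n$. Hence we assume $\phi\in\Val_k^\infty(V)$ and $\psi\in\Val_{n-k}^\infty(V)$.

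The key tool is the commutative square of linear maps $a\circ\bar\Delta = i\circ p$, where $a\colon V\times V\to V$, $a(x,y)=x+y$; $\bar\Delta\colon V\to V\times V$, $\bar\Delta(x)=(x,-x)$; $p\colon V\to\{0\}$; and $i\colon\{0\}\hookrightarrow V$. This is a fibered square since $\bar\Delta(V)=a^{-1}(0)$, so that the base change identity
$$ i^*\circ a_* = p_*\circ\bar\Delta^* $$
is applicable (either as a general principle for linear maps, or by direct verification via Propositions~\ref{prop:pullback_linear} and \ref{prop:pushforward_linear}). Applied to the exterior product $(\phi\otimes\alpha)\boxtimes(\psi\otimes\beta)$ and recalling that $(\phi\otimes\alpha)*(\psi\otimes\beta) = a_*\bigl((\phi\otimes\alpha)\boxtimes(\psi\otimes\beta)\bigr)$, one obtains
$$ \bigl((\phi\otimes\alpha)*(\psi\otimes\beta)\bigr)(\{0\}) = p_*\bar\Delta^*\bigl((\phi\otimes\alpha)\boxtimes(\psi\otimes\beta)\bigr). $$
Factoring $\bar\Delta = (\id\times(-\id))\circ\Delta$ and using $\Delta^*(\phi\boxtimes\psi)=\phi\cdot\psi$ together with $(\id\times(-\id))^*(\phi\boxtimes\psi)=\phi\boxtimes(-\id)^*\psi$ yields $\bar\Delta^*(\phi\boxtimes\psi) = \phi\cdot(-\id)^*\psi$.

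Since $p_*$ along $p\colon V\to\{0\}$ extracts the $n$-homogeneous component of a valuation and contracts it with the accompanying density, the two density factors $\alpha,\beta$ yield
$$ \bigl((\phi\otimes\alpha)*(\psi\otimes\beta)\bigr)(\{0\}) = \langle(\phi\cdot(-\id)^*\psi)_n,\alpha\rangle\cdot\beta \in \Dens(V^*), $$
so that pairing with $v\in\Dens(V)$ produces $P_1 = \langle(\phi\cdot(-\id)^*\psi)_n,\alpha\rangle\langle\beta,v\rangle$. To conclude, note that $(-\id)^*$ is an algebra automorphism of $\Val^\infty$ acting trivially on $\Val_n(V)=\Dens(V)$, whence $(\phi\cdot(-\id)^*\psi)_n = ((-\id)^*\phi\cdot\psi)_n$; combined with the trivial one-dimensional identity $\langle u,\alpha\rangle\langle\beta,v\rangle = \langle\alpha,v\rangle\langle u,\beta\rangle$ valid for $u\in\Dens(V)$, $\alpha,\beta\in\Dens(V^*)$, $v\in\Dens(V)$, this gives $P_1 = P_2$.

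The main technical hurdle will be the careful bookkeeping of density factors in the base change identity. If a general base change theorem is not available in the required form, the identity may be verified directly by expressing both sides in terms of the $0$-homogeneous currents via Proposition~\ref{prop:exterior_product} and the explicit pullback/pushforward formulas of Propositions~\ref{prop:pullback_linear} and \ref{prop:pushforward_linear}; the steps involving $p$ and $i$ then reduce to standard operations on $\delta_0$ and constant forms.
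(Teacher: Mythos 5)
Your argument is conceptually sound and takes a genuinely different route from the paper's. The paper establishes the identity by an explicit one-line computation: it writes $\phi*\psi(\{0\})=a_*(\phi\boxtimes\psi)(\{0\})$ and then uses the concrete derivative formula
$a_*(\phi\boxtimes\psi)(\{0\})=\tfrac{1}{n!}\tfrac{\partial^n}{\partial t^n}\big|_{t=0}(\phi\boxtimes\psi)\bigl((-\id,\id)\Delta(tS)\bigr)$
(with $S$ of unit volume) to identify this with $\bigl(\Delta^*((-\id)^*\phi\boxtimes\psi)\bigr)_n=\bigl((-\id)^*\phi\cdot\psi\bigr)_n$. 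You instead encapsulate the same content as a base change across the Cartesian square $a\circ\bar\Delta=i\circ p$, then unwind $\bar\Delta^*$ through $\Delta^*$ and $(\id\times(-\id))^*$. This is a clean and more modular packaging: it separates the ``geometric'' step (base change) from the ``algebraic'' step (identifying $\bar\Delta^*(\phi\boxtimes\psi)$ with $\phi\cdot(-\id)^*\psi$), and your preliminary reduction to $k+l=n$ and the observation that $(-\id)^*$ acts trivially on $\Val_n$ are correct and helpful. What the paper's approach buys is self-containedness: the derivative formula is essentially a direct proof of the base change in this particular instance, so no external theorem is needed.

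The one substantive caveat is the central step: the identity $i^*\circ a_*=p_*\circ\bar\Delta^*$ on smooth valuations is plausible and indeed true, but you invoke it as ``a general principle for linear maps,'' and such a theorem in exactly this form is not something the paper records or cites. Your fallback — verifying it directly through Propositions~\ref{prop:pullback_linear} and \ref{prop:pushforward_linear} — would close the gap, but as written it is only sketched; the density bookkeeping you flag at the end (which copy of $\Dens(V^*)$ is consumed by $p_*$ versus passed along by $\bar\Delta^*$, and the sign/orientation conventions in the $0$-homogeneous current formulas) is precisely where the work would go. Notice also that your $\bar\Delta(x)=(x,-x)$ differs from the paper's parametrization $(-x,x)$ of $a^{-1}(0)$ by a global $(-\id)$; both parametrize the same fiber, and you compensate correctly at the end via $\bigl(\phi\cdot(-\id)^*\psi\bigr)_n=\bigl((-\id)^*\phi\cdot\psi\bigr)_n$, so this is only a cosmetic mismatch. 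In short: correct idea, modulo actually proving (or properly citing) the base change — which the paper sidesteps by computing $a_*(\cdot)(\{0\})$ explicitly.
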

\begin{proof}
 Choose a Euclidean inner product to  identify $V$ with $\RR^n$ and $\Dens(V)\simeq \CC \simeq \Dens(V^*)$. 
Choose $S\subset \RR^n$ with volume $1$. Then 
\begin{align*} \phi*\psi(\{0\})& = a_*(\phi\boxtimes \psi)(\{0\})\\
	& = \frac{1}{n!}\left.\frac{\partial^n}{\partial t^n}\right|_{t=0} \phi\boxtimes \psi((-\id,\id)\circ\Delta( tS))\\
	& =  (\Delta^* ((-\id)^*\phi\boxtimes \psi)_n\\
	&= ((-\id)^*\phi\cdot \psi)_n.
	\end{align*}
\end{proof}

	We are now ready to prove Theorem~\ref{thm:properties}, which we restate below.
\begin{Theorem}  The $\FC$-transform of valuations has the following properties:
\begin{enumerate}
	\item $\FC$ commutes with the natural action of $\mathrm{GL}(V)$. 
		\item \label{F:inversion} Inversion formula: $(\FC_{V^*}\times \id)\circ \FC_V\phi =(-\id)^*\phi$.
		\item  Let $i:V\hookrightarrow W$ be an injective linear map, and $p=i^\vee:W^*\to V^*$. Then for $\phi\in \Val^\infty(W)$, $\FC_V( i^* \phi) = p_* (\FC_W\phi)$.
		\item $\FC (\phi \boxtimes \psi)= \FC\phi \boxtimes \FC\psi$ for  $\phi\in \Val^\infty(V)$, $\psi\in \Val^{\infty}(W)$.
		\item \label{F:hom} $\FC$ intertwines product and convolution: for $\phi,\psi \in\Val^\infty(V)$, $\FC (\phi\cdot \psi)=\FC\phi\ast\FC\psi$. 
		\item  Self-adjointness: $\langle \FC u,  \theta \rangle = \langle u, \FC \theta\rangle  $ for $u \in \Val^{-\infty}(V)$, $\theta\in \Val^\infty(V^*)$.
		\item \label{F:surjection} Let $p:V\to W$ be a surjective linear map, and $i=p^\vee:W^*\to V^*$. Then for $\psi\in\Val^{-\infty}(W)$, $\FC_V  ( p^* \psi)= i_*(\FC_W\psi)$.  
	\end{enumerate}
\end{Theorem}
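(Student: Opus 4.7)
My plan rests on the injectivity of the assignment $\tau\colon \Val^{-\infty}(V)\to \Omega^{\bullet}_{-\infty}(V^*, \largewedge^\bullet V^*\otimes \ori(V))$, which is the essential content of Proposition~\ref{prop:forms_121_valuations}. Every identity $A=B$ in $\Val^{-\infty}$ therefore reduces to $\tau(A)=\tau(B)$; combined with the defining relation $\tau(\FC\phi)=\FC^0\tau(\phi)$, this throws each of the seven properties onto the corresponding statement for the Fourier transform of valuation-type forms developed in Section 4.

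Properties (1) and (2) are essentially immediate. For (1), both the normal-cycle-plus-Hodge-star description of $\tau$ and the integral defining $\FC^0$ are manifestly natural under linear isomorphisms, so $\FC$ inherits $\mathrm{GL}(V)$-equivariance. For (2), Lemma~\ref{lem:inversion_F0} applied to $\tau(\phi)$ gives $(\FC^0_V\times\id)\circ\FC^0_{V^*}\tau(\phi)=(-\id)^*\tau(\phi)=\tau((-\id)^*\phi)$; using that $\tau(\FC\phi)=\FC^0\tau(\phi)$ twice, the injectivity of $\tau$ yields the inversion formula on the level of valuations.

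For the mono-pullback identity (3) and the epi-pushforward identity (7), I would expand each side via the current formulas of Propositions~\ref{prop:pullback_linear} and~\ref{prop:pushforward_linear}, reducing each claim to an identity of currents that is precisely supplied by Proposition~\ref{prop:fourier_interchange2} (Fourier on closed, $0$-homogeneous, $i_E$-annihilated currents commutes with pullback under monomorphisms and pushforward under epimorphisms, up to a computable sign). The Hodge-star factors hidden in the definitions of $\FC^0$ versus $\FC$ and in the pushforward formula $\widetilde\tau(f_*\phi)=f\circ(f^\vee)^*\widetilde\tau(\phi)$ must be unravelled, but the bookkeeping is mechanical. Property (4) then combines Proposition~\ref{prop:exterior_product} (which contributes the sign $(-1)^{(n-k)l}$ relating $\tau(\phi\boxtimes\psi)$ to $\tau(\phi)\boxtimes\tau(\psi)$) with Lemma~\ref{lem:extProduct} (which contributes the sign $(-1)^{km+ln}$ for $\FC^0$ of an exterior product), and a direct verification shows that these signs cancel so as to give the sign-free identity $\FC(\phi\boxtimes\psi)=\FC\phi\boxtimes\FC\psi$. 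Property (5) follows at once by writing $\phi\cdot\psi=\Delta^*(\phi\boxtimes\psi)$ and $\phi*\psi=a_*(\phi\boxtimes\psi)$, observing $\Delta^\vee=a$, and invoking (3) and (4) in sequence.

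The main obstacle is the self-adjointness (6). My plan is to realise the Alesker--Poincar\'e pairing between a generalised and a smooth valuation as the top-degree component of an Alesker product, and then to reformulate this via convolution using Lemma~\ref{lemma:pairings}. Applied on both $V$ and $V^*$, this converts both sides of $\langle \FC u,\theta\rangle=\langle u,\FC\theta\rangle$ into convolution-at-the-origin quantities that are interchanged by the product-convolution intertwining (5) already established; the self-adjointness of the Fourier transform on tempered forms (Lemma~\ref{lemma:Fsymmetry}) supplies the final symmetric identity needed to match them. The hard part is the careful matching of density, orientation, and sign factors across $V$ and $V^*$, and the extension from smooth $u$ to generalised $u\in\Val^{-\infty}(V)$ by continuity in the appropriate topology. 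Once (6) is proven, the equivalence of (3) and (7) via the inversion formula noted after the theorem statement provides a consistency check.
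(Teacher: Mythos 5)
Your overall strategy -- reduce every identity of valuations to the corresponding identity of $0$-homogeneous currents via injectivity of $\tau$, and then invoke the form-level results of Sections 3--5 -- is precisely the paper's strategy, and your treatment of (1)--(4) and (7) matches the paper's proof. There is, however, a genuine gap in (5), and a questionable step in (6).

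For (5), you claim the result follows ``at once'' by writing $\phi\cdot\psi = \Delta^*(\phi\boxtimes\psi)$ and invoking (3). But (3) is stated for $\phi \in \Val^\infty(W)$, whereas the exterior product lands only in $\Val(V\times V)$, not in $\Val^\infty(V\times V)$: the pushforward along the blow-down in Proposition~\ref{prop:exterior_product} does not preserve smoothness. So (3) does not apply directly to $\Delta^*(\phi\boxtimes\psi)$. The paper bridges this gap by choosing an approximate identity $\mu_j \in \mathcal M^\infty_c(\GL(V\times V))$, setting $\zeta_j = (\phi\boxtimes\psi)\ast\mu_j \in \Val^\infty(V\times V)$, applying (3) to $\zeta_j$, and passing to the limit using the sequential continuity of $\Delta^*$ and $a_*$ in the H\"ormander topology on the subspace of generalized valuations with appropriately restricted wavefront. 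This approximation argument is not optional; without it the reduction ``at once'' is not justified.

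For (6), the structure you propose (express the Alesker--Poincar\'e pairing as $L_n$ of an Alesker product, transform to a convolution at the origin, and use Lemma~\ref{lemma:pairings}) is indeed the paper's. However, the step you describe as being supplied by Lemma~\ref{lemma:Fsymmetry} is not how the argument closes: that lemma is simply the definition of $\FC$ on distributions and carries no new information at the valuation level. In the paper, the final link is the inversion formula~\eqref{F:inversion}, which converts $(\FC_{V^*}\times\id)\circ\FC_V u$ back to $(-\id)^*u$ before invoking $P_1 = P_2$. You should replace the invocation of Lemma~\ref{lemma:Fsymmetry} with an explicit use of the inversion formula, and you also implicitly need the identity $\FC\chi = \vol\otimes\vol^*$ to justify the step introducing $L_0$.
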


\begin{proof}
	\begin{enumerate}
			\item This is clear by construction.
		\item Follows at once from Lemma \ref{lem:inversion_F0}.
	
		\item Follows from Propositions ~\ref{prop:pullback_linear}, \ref{prop:pushforward_linear} and \ref{prop:fourier_interchange2}. The first two propositions assert that $i^* \phi$ corresponds to the pushforward under $p$ of the $0$-homogeneous current of the valuation, while $p_* (\FC_W\phi)$ is given by the restriction (using $i$) of the respective current. The last proposition asserts the equality of the two currents. 
		\item Follows from Proposition~\ref{prop:exterior_product} and Lemma~\ref{lem:extProduct}.

		\item Observe that $\Delta_V^\vee = a_{V^*}$, while $\FC(\phi \boxtimes \psi) =\FC\phi \boxtimes \FC\psi$ by (2). Let $\mu_j\in\mathcal M^\infty_c(\GL(V\times V))$ be an approximate identity such that $\mu_j\to \delta_{\id}$ as $j\to \infty$.  
		Denoting $\zeta=\phi\boxtimes\psi\in\Val^{-\infty}(V\times V)$, $\zeta_j=\zeta\ast \mu_j\in \Val^\infty(V\times V)$, we have
	
		\begin{align*}
			\FC(\phi\cdot\psi )&= \FC(\Delta^*(\zeta))=\lim_{j\to \infty} \FC(\Delta^*(\zeta_j))=\lim_{j\to \infty} a_*\FC(\zeta_j)\\&=\lim_{j\to \infty} a_*(\FC(\zeta)\ast \mu_j)=a_*(\FC(\zeta))=\FC(\phi)\ast\FC(\psi),
		\end{align*}
		
		where the second equality follows from the sequential continuity of $\Delta^*$ in the H\"ormander topology \cite{Duistermaat:FIO} of a subspace of valuations with restricted wavefront to which $\zeta$ belongs, and in which $\zeta_j$ converges, the third equality is property (3), the fourth equality follows from the equivariance of the $\FC$-transform on valuations with respect to the general linear group, and the fifth equality follows from the sequential continuity of $a_*$ in the H\"ormander topology, similarly to $\Delta^*$. 
	\item 
	
	  By continuity, it is enough to prove the claim for $u\in \Val^\infty(V)\subset \Val^{-\infty}(V)$.
		
Let $L_0\colon \Val^\infty(V^*)\otimes \Dens(V)\otimes \Dens(V^*)\to \C$ be defined by composition of  evaluation at a point with the identification $\Dens(V)\otimes \Dens(V^*)\simeq\C$. Similarly, let $L_n \colon \Val^\infty(V)\otimes \Dens(V^*) \to \C$ be the projection to the top-degree component, which is just $\Dens(V)$, followed by the same identification. 
Using $\FC\chi = \vol\otimes \vol^*$, the homomorphism property \eqref{F:hom}, Lemma~\ref{lemma:pairings}, and the inversion formula \eqref{F:inversion}, we obtain
	\begin{align*}
		\langle \FC_V u, \theta\rangle & = L_n(\FC_V u \cdot \theta)\\
		&  =  L_0 (\FC_{V^*}\times \id(\FC_V u \cdot \theta))\\
		& =  L_0((\FC_{V^*}\times \id) \circ \FC_V u  )	 * \FC_{V^*}\theta)\\
		& = P_1 ( (\FC_{V^*}\times \id) \circ \FC_V u , \FC_{V^*} \theta) \\
		& = P_2 ( (\FC_{V^*}\times \id) \circ \FC_V u , \FC_{V^*} \theta) \\
		& = L_n(u \cdot \FC_{V^*} \theta)\\
		& = \langle u, \FC_{V^*} \theta\rangle,
	\end{align*}
as claimed.

\item For smooth valuations, this follows from Propositions~\ref{prop:pullback_linear} and \ref{prop:pushforward_linear} and Lemma~\ref{Fourier:funct}. The general case then follows by continuity.
\end{enumerate}
\endproof

 It remains to show that our $\FC$- and Alesker's Fourier transforms coincide. Since this  verification will be reduced to the two-dimensional case,  we will use the following statement about valuations in the plane.

 \begin{Lemma}\label{lemma:plane}
	Suppose that $\phi\in \Val^{-\infty}_1(\RR^2)$ is a generalized valuation, and  $g(e^{i\theta}) d\theta$ a generalized measure such that
	$$ \phi(K)= \int_0^{2\pi} h_K(e^{\mathbf i \theta}) g(e^{i\theta}) d\theta$$
	for every smooth and strictly positively curved convex body $K$ in $\RR^2$. Then
	$$ \tau(\phi)= \frac{g(-y/|y|)}{|y|^3}   (y_1 dx_1 + y_2 dx_2) \otimes  ( y_1 dy_2- y_2dy_1),$$
	where $(x,y)$ are the coordinates on $\RR^2\times \RR^2 \supset \RR^2\times S^1$. 
\end{Lemma}
\begin{proof}
	If  $\psi = \int_{\nc(K)} \omega$, where $ \omega= f_1(y) dx_1 + f_2(y) dx_2 \in \Omega^1(S\RR^2)$, then a computation shows that 
	$$ \psi(K)= \int_{S^1} (-f_1(y) y_2 + f_2(y) y_1 \, )dS_1(K,y)=2 V(K,-f_1 y_2 + f_2 y_1 ) .$$
	
	The definition of $\phi$ implies
	$$ \phi\cdot \psi=  \int_{S^1} h(e^{\mathbf i \theta}) g(-e^{i\theta})d\theta$$
	 for  valuations of the form $\psi(K)= 2V(K,h)$. Hence 
	 $$T(\phi)=   (y_1 dx_1 + y_2 dx_2) \otimes g(-e^{i\theta}) d\theta,$$
	 from which the expression for $\tau(\phi)$ follows.
 \end{proof}

\proof[Proof of Theorem~\ref{thm:F=F}]  The Alesker construction of the Fourier transform strongly relies on the irreducibility theorem \cite{Alesker:Irreducibility}, and so we make use of it for the verification. Denote the Alesker--Fourier transform by $\FF$.
Observe that property (4) of Theorem \ref{thm:properties} holds for both definitions, while Alesker products of $k$-tuples of elements of $\Val_1^\infty(V)$ span a dense subset of $\Val_k^\infty(V)$ by the irreducibility theorem. It therefore suffices to verify that $\FC=\FF$ on $\Val_1^\infty(V)$.
Furthermore, since $\FC, \FF:\Val_1^{\pm, \infty} (V)\to \Val_{n-1}^{\pm, \infty} (V^*)\otimes\Dens(V)$ are both equivariant isomorphisms between irreducible $\GL_n(\R)$-modules, there exist 
constants  $c_{\pm}(V)$ such that $\FC=c_\pm(V)\FF$. Since by Theorem \ref{thm:properties} and \cite{Alesker:Fourier} both $\FC$ and $\FF$ intertwine restrictions and pushforwards by projections, the number $c_{\pm}(V)=c_{\pm}(n)$ depends only on the dimension of $V$. By the same token,  fixing an inclusion 
$i:\R^k\hookrightarrow \R^n$  shows that $c_\pm$ is independent of the dimension. Finally, the inversion formulas of Theorem \ref{thm:properties} and \cite{Alesker:Fourier} imply that $c_+^2=c_-^2=1$.

To determine the value of $c_+$, it suffices to note that on $\R^1$, $\FC(\vol_1)=\chi=\FF(\vol_1)$, that is $c_+=1$. 
\medskip

Now consider $\R^2=\C$, and define $\phi\in\Val_1(\R^2)$ by
\[\phi(K)=h_K(1)+h_K(e^{2\pi\mathbf i/3})+h_K(e^{4\pi \mathbf i/3}),\] which is a continuous, and therefore  generalized, translation-invariant valuation. Then $\psi(K):=\phi(K)-\phi(-K)$ lies in $\Val_1^{-,\infty}(\R^2)$. 

According to Lemma~\ref{lemma:plane} and 
 denoting $\omega= \sign(y_1)\boxtimes \delta_0(y_2)dy_2 $, the $0$-homogeneous current of $\phi$ is
\[ \tau(\psi)=dx_1 \otimes \omega  + (e^{2\pi \mathbf i/3})^*(dx_1 \otimes \omega )+(e^{-2\pi \mathbf i/3})^*(dx_1 \otimes \omega ). \]
We have
\[\FC^0(dx_1\otimes \omega)=d\xi_2 \otimes  \FC(\omega),\]  
where
\[ \FC( \omega) (\eta_1, \eta_2)=\frac{1}{\pi \mathbf i \eta_1}\boxtimes 1 \, d\eta_1.\]

As $\FC$ commutes with rotations, a straightforward computation now shows that 
\begin{align*}\tau(\FC\psi)=\frac{3}{\pi\mathbf i \eta_1(\eta_1^2-3\eta_2^2)}(\eta_1 d\xi_1+\eta_2 d\xi_2) \otimes (\eta_1 d\eta_2 -\eta_2 d\eta_1),\end{align*}
 where $(\xi,\eta)$ are the coordinates on $\RR^2\times \RR^2\supset \RR^2\times S^1$.

Let us recall from  \cite{alesker2013fourier} the description of the Alesker--Fourier transform in $\R^2=\mathbb C$.
For \[ \zeta(K)=\int_0^{2\pi}h_K(e^{\mathbf i\theta}) f(\theta) d\theta,\]
write $f=f_+ + f_-$, $f_-= f_-^{hol}+f_-^{anti}$, where $f_+, f_-$ are the even and odd parts of $f$ on the circle, 
$f_-^{hol}(\theta)=\sum_{n\equiv 1(2),n>0}\widehat f(n)e^{\mathbf in\theta}$ and $f_-^{anti}(\theta)=\sum_{n\equiv 1(2),n<0}\widehat f(n)e^{\mathbf in\theta}$ the 
holomorphic and anti-holomorphic parts of $f_-$, respectively. Then
 \[ \FF\zeta(K)=\int_0^{2\pi}h_K(e^{\mathbf i\theta}) \tilde f(\theta) d\theta, \]
 where 
 \begin{align*}\tilde f(\theta)&=f_+(\theta+\frac \pi 2)+f_-^{hol}(\theta+\frac \pi 2)-f_-^{anti}(\theta+\frac \pi 2)=\sum \mathbf i^{|n|}\widehat f(n)e^{\mathbf i n\theta}.\end{align*}

Now $\psi$ is given by $\psi(K)=\int_0^{2\pi}h_K(e^{\mathbf i\theta})f_\psi(\theta)d\theta$ with 
\[ f_\psi(\theta)=\frac{1}{2\pi}(\delta_0(\theta)+\delta_{2\pi/3}(\theta)+\delta_{4\pi/3}(\theta)-\delta_\pi(\theta)-\delta_{\pi/3}(\theta)-\delta_{5\pi/3}(\theta)).\]
The Fourier series of $\delta_0(\theta)$ is 
\[ \delta_0(\theta)=\sum_{n=-\infty}^\infty e^{\mathbf in\theta},\]
implying that \[f_\psi(\theta)=\frac{1}{2\pi}\sum_{n=-\infty}^\infty (3e^{3\mathbf i n\theta}-3e^{3\mathbf i n(\theta+\pi)}).\]

It follows that $\mathbb F\psi$ is given by 
\[\mathbb F\psi(K)=\int_0^{2\pi} h_K(e^{\mathbf i\theta})g_\psi(\theta)d\theta \]  with
\begin{align*} g_\psi(\theta)&= \frac{1}{2\pi}\sum_{n=-\infty}^\infty (3\mathbf i^{3|n|}e^{3\mathbf i n\theta}-3\mathbf i^{3|n|}e^{3\mathbf i n(\theta+\pi)})=\frac{3}{\pi\mathbf i \cos3\theta}.\end{align*}

 In view of Lemma~\ref{lemma:plane} it remains to notice that on the unit circle, $\eta_1(\eta_1^2-3\eta_2^2)=4\cos^3\theta-3\cos\theta=\cos3\theta$.

\end{proof}

	\subsection{Example: even valuations}
	The Fourier transform on even valuations, acting on the Klain section or the Crofton measure of a valuation, amounts to the action of the orthogonal complement on the Grassmannian. Let us recover this fact using our construction. 
	
	We use the standard Euclidean structure and orientation on $\R^n$. For $E\in\Gr_k(\R^n)$, denote $ \phi_E(K)=\vol(P_{E^\perp} K)$, where $P_{E^\perp}$ is the orthogonal projection onto $E^\perp$.
	Choose orthonormal coordinates on $\R^n$ such that $E=\{x_{k+1}=\dots=x_n=0\}$. In view of Lemma~\ref{lemma:tauProj}, equation \eqref{eq:tauProj}, we have 
	\[ \tau(\phi_E)=dx_1\wedge\dots\wedge dx_k \otimes \delta_0(y_{k+1})\cdots \delta_0(y_{n}) dy_{k+1}\wedge \dots\wedge dy_n.\] 
	
	As \[\FC(\delta_0(y_{k+1})\cdots \delta_0(y_{n})  dy_{k+1}\wedge \dots\wedge dy_n) =(-1)^{k(n-k)}\delta_0(y_{1})\cdots \delta_0(y_{k}) dy_{1}\wedge \dots\wedge dy_k,\]  
we find that
\begin{align*}\tau(\FC(\phi_E))&=\FC^0\tau(\phi_E)\\&=(-1)^{k(n-k)}\ast (dx_1\wedge\dots\wedge dx_k)\otimes \delta_0(y_{1})\cdots \delta_0(y_{k}) dy_{1}\wedge \dots\wedge dy_k
\\&=\tau(\phi_{E^\perp}).\end{align*}

Now for any valuation given by the Crofton formula $\phi(K)=\int_{\Gr_{k}(\R^n)}\phi_E(K)dm(E)$, we deduce by linearity and continuity of the Fourier transform that $\FC\phi(K)=\int_{\Gr_{k}(\R^n)}\phi_{E^\perp}(K)dm(E),$
that is $\FC\phi(K)=\int_{\Gr_{n-k}(\R^n)}\phi_{F}(K)dm'(F)$, where $m'=\perp_*(m)$.

	\subsection{Example: intrinsic volumes}
	For the purpose of illustration, let us  compute the Fourier transform of the intrinsic volumes $V_k$ directly from our definition.

	\begin{Lemma} \label{lemma:tauV} For $k=1,\ldots, n-1$, 
		$$ \tau(V_k)=    d i_E  \lambda_k,   $$ where 
		$$ \lambda_k = \frac{1}{k!(n-k)! \vol(S^{n-k-1})} \frac{1}{|y|^{n-k}} \sum_{\pi} \sign(\pi) dx_{\pi_1}\cdots dx_{\pi_k} dy_{\pi_{k+1}} \cdots dy_{\pi_n}.$$
	\end{Lemma}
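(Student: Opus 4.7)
The strategy is to verify that $d i_E \lambda_k$ satisfies the four conditions of Proposition~\ref{prop:forms_121_valuations} characterizing valuation-type currents, thereby producing a unique $\phi_k \in \Val^{\infty}_k(V)$ with $\tau(\phi_k) = d i_E \lambda_k$, and then to identify $\phi_k$ with $V_k$ by $O(n)$-invariance and a single normalization computation.

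Three of the four valuation-type conditions are essentially formal. The form $\lambda_k$ is $0$-homogeneous in $y$ since the factor $|y|^{-(n-k)}$ exactly cancels the scaling weight of the $(n-k)$ wedged $dy_j$'s, hence so are $i_E\lambda_k$ and $d i_E\lambda_k$. Closedness of $d i_E\lambda_k$ is automatic. The identity $i_E(d i_E\lambda_k) = 0$ follows from Cartan's formula $\mathcal L_E = d i_E + i_E d$ together with $i_E^2=0$ and the $0$-homogeneity of $i_E\lambda_k$, which forces $\mathcal L_E(i_E\lambda_k)=0$. The verticality $(d i_E\lambda_k)\wedge E = 0$ is the most delicate point; I would establish it by exhibiting a factorization of the form
\[
d i_E\lambda_k \;=\; \frac{c_{n,k}}{|y|^{n+k}}\, E \wedge \Theta_k,
\]
where $E = \sum_j y_j\, dx_j$ is the value-part Euler covector and $\Theta_k$ is an $(n-k-1)$-form with values in $\largewedge^{k-1} V^*$. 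Such a factorization already appears in the case $n=2,k=1$, where one finds $d i_E\lambda_1 = \frac{1}{2|y|^3}(y_1 dx_1 + y_2 dx_2)\wedge(y_1 dy_2 - y_2 dy_1)$. In general it emerges when one differentiates the coefficient $y_{\pi_j}/|y|^{n-k}$ produced by $i_E$: the radial contribution $-(n-k) y_{\pi_j}\sum_i y_i\, dy_i / |y|^{n-k+2}$ combines with the antisymmetric $\sign(\pi)$ sum to manufacture an explicit $E$-factor on the value side, while the remaining $dy_{\pi_j}/|y|^{n-k}$ terms cancel by the skew-symmetry of the sum. Once such a factorization is established, verticality is immediate from $E\wedge E = 0$.

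By Proposition~\ref{prop:forms_121_valuations}, this identifies $d i_E\lambda_k = \tau(\phi_k)$ for a unique translation-invariant smooth valuation $\phi_k\in\Val_k^\infty(V)$. The form $\lambda_k$ is manifestly $O(n)$-invariant (the symmetric sum is intrinsic to the Euclidean inner product and $|y|$ is rotation-invariant) and even under simultaneous sign-flip, so $\phi_k$ is $O(n)$-invariant, even, and of degree $k$. By Hadwiger's characterization $\phi_k = c_k V_k$ for some scalar $c_k$.

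The main obstacle is then pinning down $c_k = 1$. I would do this through Kubota's formula
\[
V_k(K) \;=\; \frac{\binom{n}{k}}{\omega_{n-k}}\int_{\Gr_k(V)} \vol_k(P_E K)\, dE,
\]
apply $\tau$ term-by-term using Lemma~\ref{lemma:tauProj} to express the right-hand side as an orthogonally invariant integral of delta-type currents $\pm\, dx_E\otimes[[E]]$, and recognize the $O(n)$-average of these currents (weighted by the associated $dx_E$) as precisely the symmetric sum appearing in $|y|^{n-k}\lambda_k$; the Plancherel-type integral over the Grassmannian then yields the combinatorial factor $1/(k!(n-k)!\vol(S^{n-k-1}))$ after all Euclidean and orientation identifications are tracked. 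Alternatively, knowing already that $\phi_k = c_k V_k$, one may simply pair both sides against a single test valuation — for instance $\vol_n$ or $V_{n-k}$ — where the pairing can be computed explicitly from the current representation on one side and from mixed-volume data on the other, extracting $c_k$ in one step.
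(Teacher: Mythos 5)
Your strategy is genuinely different from the paper's. You propose to verify that $d\,i_E\lambda_k$ is valuation-type in the sense of Proposition~\ref{prop:forms_121_valuations}, obtain a unique $\phi_k$ with $\tau(\phi_k)=d\,i_E\lambda_k$, and then identify $\phi_k=V_k$ via Hadwiger's theorem plus a normalization. The paper instead computes \emph{forward}: it starts from the Lipschitz--Killing form $\kappa_k$ whose normal-cycle integral is $V_k$, notes that $T(V_k)=(-1)^{n-k}a^*D\kappa_k=d\kappa_k$ (the Rumin correction is trivial here since $d\kappa_k$ is already vertical, and the antipodal map contributes $(-1)^{n-k}$), and then computes $\tau(V_k)=r^0(d\kappa_k)$ by pulling the form back through the radial projection $y\mapsto y/|y|$, which directly yields $d\,i_E\lambda_k$ with the stated constant. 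The forward route makes verticality and the normalization come for free, inherited from $\kappa_k$.

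Your proposal leaves exactly those two points as sketches, and both would require substantial work to close. For verticality, the proposed factorization $d\,i_E\lambda_k=\tfrac{c_{n,k}}{|y|^{n+k}}\,E\wedge\Theta_k$ has a degree mismatch: with $E=\sum_j y_j\,dx_j$ a one-covector on the value side and $\Theta_k$ valued in $\largewedge^{k-1}V^*$, the exterior degree of $\Theta_k$ must be $n-k$, not $n-k-1$, to match $d\,i_E\lambda_k\in\Omega^{n-k}$ --- indeed your own example for $n=2$, $k=1$ has $\Theta_1=y_1\,dy_2-y_2\,dy_1$ of exterior degree $1=n-k$. More importantly, exhibiting such a factorization requires computing $d\,i_E\lambda_k$ explicitly, which is essentially the same calculation the paper performs; verticality is not a shortcut. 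For the normalization, the proposed Kubota/Grassmannian argument is plausible but far from a one-liner: one would need to average the delta-type currents of Lemma~\ref{lemma:tauProj} over $\Gr_k$ and track orientation and density identifications to produce the precise constant $1/(k!(n-k)!\vol(S^{n-k-1}))$. The characterize-and-identify strategy is coherent and would eventually work, but it trades the paper's single explicit pullback computation for two harder loose ends, so the paper's direct route is preferable here.
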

	\begin{proof}
		Integration over the normal cycle of  
		$$\kappa_k =  c_{k,n} \sum_{\pi} \sign(\pi) y_{\pi_{1}} dx_{\pi_2}\cdots dx_{\pi_{k+1}} dy_{\pi_{k+2}} \cdots dy_{\pi_n},$$
		where $ c_{k,n} = \frac{1}{k! (n-k)! \omega_{n-k}}$ and the sum is over the permutation group,
		yields the $k$th intrinsic volume. Note that 
		$$ d\kappa_k=  c_{k,n} \sum_{\pi} \sign(\pi) dx_{\pi_1}\cdots dx_{\pi_k} dy_{\pi_{k+1}} \cdots dy_{\pi_n} = (-1)^{n-k} a^* D\kappa_k=T(V_k).$$
		We have to pull this form back to $\RR^n\setminus \{0\}$. 
		First observe that since $j\circ \pi(y)= y/|y|$, where $j\colon S^{n-1}\to \RR^n$  denotes the inclusion and $\pi$ is the radial projection, we have 
		$$ \pi^* j^* dy_i = d( y_i/|y|)= \frac{1}{|y|} dy_i -  \frac{y_i}{|y|} \rho$$
		where $\rho= |y|^{-2}\sum_{j=1}^n y_j dy_j=\frac{1}{|y|}d(|y|)$.
		Thus 
		\begin{align*} \pi^* j^* ( dy_{\pi_{k+1}} \cdots dy_{\pi_n}) &=\frac{1}{|y|^{n-k}}  dy_{\pi_{k+1}} \cdots dy_{\pi_n}
			- \frac{1}{|y|^{n-k}}\rho \wedge \sum_{j=1}^k y_{\pi_j} i_{\partial_{y_{\pi_j}}} dy_{\pi_{k+1}} \cdots dy_{\pi_n}\\
			& =\frac{1}{|y|^{n-k}}  dy_{\pi_{k+1}} \cdots dy_{\pi_n}
			- \frac{1}{|y|^{n-k}}\rho \wedge i_E dy_{\pi_{k+1}} \cdots dy_{\pi_n}\\
			&= \frac{1}{|y|^{n-k}} i_E (\rho \wedge dy_{\pi_{k+1}} \cdots dy_{\pi_n})\\
			& = -\frac{1}{n-k}  i_E d(|y|^{-(n-k)} dy_{\pi_{k+1}} \cdots dy_{\pi_n}).
		\end{align*}
		Since  $i_E  d\lambda_k  = -d i_E \lambda_k  $ the claim follows.
	\end{proof} 
	According to \cite[Lemma 3.6]{Koldobsky:FourierAnalysis}, 
$$ \FC(|x |^{-n+k})(\xi)=    \frac{\vol(S^{n-k-1})}{\vol(S^{k-1})} \frac{1}{|\xi|^{k}}$$
and hence
$$\FC^0 \lambda_k =   \lambda_{n-k}.$$
  	Using Lemmas~\ref{lemma:tauV} and \ref{lem:dif_Euler_interchange} we conclude 
 $$\FC^0\tau(V_k)=  \FC^0 (d i_E \lambda_k)= - i_Ed \FC^0\lambda_k= - i_Ed\lambda_{n-k} = d i_E \lambda_{n-k} = \tau(V_{n-k}),$$
 that is, $\FC V_k = V_{n-k}$.

\bibliographystyle{abbrv}
\bibliography{ref_papers,ref_books}

\end{document}